\renewcommand\eqref[1]{(\ref{#1})} 
\numberwithin{equation}{section}
\theoremstyle{plain}
\newtheorem{thm}{Theorem}[section]
\newtheorem{proposition}[thm]{Proposition}
\newtheorem{cor}[thm]{Corollary}
\newtheorem{lemma}[thm]{Lemma}
\theoremstyle{definition}
\newtheorem{defn}[thm]{Definition}
\newtheorem{rem}[thm]{Remark}
\newtheorem{ex}[thm]{Example}
\newcommand{\Rn}{\mathbb R^{n}}
\newcommand{\Zn}{{\mathbb Z^{n}}}
\newcommand{\Tn}{{\mathbb T^{n}}}
\newcommand{\Op}{{\textrm{\rm Op}}}
\newcommand{\Optn}{{\textrm{\rm Op}_\Tn}}
\newcommand{\Opzn}{{\textrm{\rm Op}_{\hbar\Zn}}}
\newcommand{\HS}{{\mathtt{HS}}}
\def\Ftn{{\mathcal F}_\Tn}
\newcommand{\R}{\mathbb{R}}
\newcommand{\C}{\mathbb{C}}
\newcommand{\Z}{\mathbb{Z}}
\newcommand{\N}{\mathbb{N}}
\newcommand{\T}{\mathbb{T}}
\newcommand{\dd}{\,\mathrm{d}}
\newcommand{\mcF}{\mathcal{F}}
\begin{document}
	
   \title[Semi-classical pseudo-differential operators on $\hbar\mathbb{Z}^n$ and applications]
   {Semi-classical pseudo-differential operators on $\hbar\mathbb{Z}^n$ and applications}


\author[L. N. A. Botchway]{Linda N. A. Botchway}
\address{
  Linda N. A. Botchway:
  \endgraf
  University of Ghana
  \endgraf
  UG, Legon
  \endgraf
  Ghana
   \endgraf
  {\it E-mail address} {\rm lnabotchway001@st.ug.edu.gh}
  }

\author[M. Chatzakou]{Marianna Chatzakou}
\address{
	Marianna Chatzakou:
	\endgraf
	Department of Mathematics: Analysis, Logic and Discrete Mathematics
	\endgraf
	Ghent University, Belgium
	\endgraf
	Belgium
	\endgraf
	{\it E-mail address} {\rm marianna.chatzakou@ugent.be}
}
\author[M. Ruzhansky]{Michael Ruzhansky}
\address{
	Michael Ruzhansky:
	\endgraf
	Department of Mathematics: Analysis, Logic and Discrete Mathematics
	\endgraf
	Ghent University, Belgium
	\endgraf
	and
	\endgraf
	School of Mathematical Sciences
	\endgraf Queen Mary University of London 
	\endgraf
	United Kingdom
	\endgraf
	{\it E-mail address} {\rm Michael.Ruzhansky@ugent.be}  }

\thanks{LNA. Botchway is supported by the Carnegie Corporation of New York ( Banga-Africa project at the University of Ghana). M. Ruzhansky is partially supported by the FWO Odysseus 1 grant G.0H94.18N: Analysis and Partial Differential Equations, the Methusalem programme of the Ghent University Special Research Fund (BOF) (Grant number 01M01021) and by by EPSRC grant EP/R003025/2. M. Chatzakou is a postdoctoral fellow of the Research Foundation – Flanders (FWO)  under the postdoctoral grant No 12B1223N. LNA Bothcway is grateful to Prof. Benoit F. Sehba for the supervision of her PhD thesis and for mathematical discussions.}

     \keywords{Semi-classical pseudo-differential operators, lattice, calculus, kernel, ellipticity, difference equations, Fourier integral operators, G{\aa}rding inequality}
     \subjclass{58J40, 35S05, 35S30, 42B05, 47G30}

     \begin{abstract}
     In this paper we consider the semiclassical version of pseudo-differential operators on the lattice space $\hbar \Zn$.
     The current work is an extension of the previous work \cite{BKR20} and agrees with it in the limit of the parameter $\hbar \rightarrow 1$. The various representations of the operators  will be studied as well as the  composition,  transpose, adjoint  and  the link between ellipticity and  parametrix  of operators. We also give the  conditions for the $\ell^p$, weighted $\ell^2$  boundedness  and $\ell^p$ compactness of operators. We investigate  the relation between the classical and semi-classical  quantization  in the spirit of \cite{ruzhansky2009pseudo} and \cite{RT-JFAA} and employ its applications to   Schatten-Von Neumann classes on $\ell^2( \hbar \mathbb{Z}^n)$.  We establish G{\aa}rding  and sharp G{\aa}rding inequalities, with an application to the well-posedness of parabolic equations on  the lattice $\hbar \mathbb{Z}^n$. Finally we verify that in the limiting case where $\hbar \rightarrow 0$ the semi-classical calculus of pseudo-differential operators recovers the classical Euclidean calculus, but with a twist.
     
     \end{abstract}
     \maketitle

\tableofcontents

\section{Introduction}

The main aim of this work is to develop a calculus of pseudo-differential operators  on the lattices 
\[
 \hbar \mathbb{Z}^n= \{ x \in \mathbb{R}^n : x=\hbar k\,,\quad k \in \mathbb{Z}^n \}\,,
\]
where $\hbar \in (0,1]$ is a small parameter. The particular case when $\hbar=1$ has been considered in \cite{BKR20}. Our analysis will allow to solve difference equations on the lattice $\hbar \mathbb{Z}^n$ that can appear either as the discretisation of the continuous counterpart, or naturally in modelling problems such as the visualization of physical phenomena etc. We also investigate the behaviour of the calculus in the limit $\hbar \rightarrow 0$.

The recent work \cite{CDRT23} on a semi-classical version of the nonhomogeneous heat equation on $\hbar \Z^n$ is one of the main motivations for our analysis. More generally, the analysis of Cauchy problems with space variable on the lattice, see e.g. the parabolic Anderson model \cite{Kon16} with many applications in real word problems, stimulate the current work and in particular the development of the semi-classical calculus. Importantly, and still referring to the example of the parabolic Anderson model, the investigation of the limiting case when $\hbar \rightarrow 0$, allows for the study of the continuous analog of the parabolic Anderson model \cite{GK00} initiating from its discretized analysis. 

Let us consider a rigorous and rather simple example of a discretised difference equation: for $g$ being a function on the lattice $\hbar \mathbb{Z}^n$, and $a \in \mathbb{C}$, we regard the equation:   
  \begin{equation}\label{EQ:ex0}
  \sum_{j=1}^{n}\Big(f(k + \hbar v_j) + f(k - \hbar v_j)\Big) -2 af(k)=g(k), \quad k\in\hbar \Zn,
  \end{equation}
where $v_j=(0,\cdots,0,1,0,\cdots) \in \mathbb{Z}^n$, and the only non-zero element of the vector $v_j$ is the $j^{\text{\tiny th}}$ element and is equal to $1$. In the case where $\text{Re}(a) \neq 0$, and $g \in \ell^2(\hbar \Zn)$ the solution $f$ to the equation \eqref{EQ:ex0} is given by the following expression 
\begin{equation}\label{EQ:ex1}
f(k) = \int_{\mathbb{T}^n}e^{2\pi \frac{i}{\hbar}k\cdot \theta}\frac{1}{2\sum_{j=1}^n \cos(2\pi \theta_j) + a} \widehat{g}(\theta)\text{d}\theta,
\end{equation} 
    where $\widehat{g}$ denotes the Fourier transform of $g$ on the lattice $\hbar \mathbb{Z}^n$; that is the expression
      \begin{equation}\label{gael}
    \widehat{g}(\theta) = \sum_{k\in \hbar\mathbb{Z}^{n}} e^{-2\pi \frac{i}{\hbar}k\cdot \theta} g(k),\quad \theta\in\Tn\,.
      \end{equation} 
      Additionally, it follows that whenever $g \in \ell^2(\hbar \Zn)$ then we also have $f \in \ell^2(\hbar \Zn)$. Importantly, we know that the formula \eqref{EQ:ex1} for the solution to the equation \eqref{EQ:ex0} can be extended to give solutions also in the case where $g$ is any tempered growth distribution, i.e., when $g\in\mathcal{S}'(\hbar \Zn)$. For instance, whenever $g$ satisfies the estimate
       $$
   \sum_{k\in\hbar \Zn} (1+|k|)^{s} |g(k)|^2<\infty
 $$
 then the solution $f$ satisfies the same estimate; that is we have $$
   \sum_{k\in\hbar \Zn} (1+|k|)^{s} |f(k)|^2<\infty\,,
 $$
see Example $(3)$ in Section \ref{SEC:EX}.

\smallskip We point out that the operators that are of the form \eqref{EQ:ex0} extend the classical notion of difference operators on a discrete setting like in particular the lattice case $\hbar \mathbb{Z}^n$. Indeed, as shown in Section \ref{Sec.h=0} the calculus of pseudo-differential operators in our semi-classical setting agrees with the classical pseudo-differential calculus in the Euclidean setting. Thus, in this work we adopt the terminology {\it pseudo-differential operators} as it exists already in the literature, see e.g. \cite{Rab09}, to describe the operators that we consider, emphasising this way that they extend the usual class of difference operators into a $*$-algebra.

\smallskip Hence, the analysis here aims to develop a global calculus of pseudo-differential operators on the lattice $\hbar \mathbb{Z}^n$ that will be employed to deal with problems around

\begin{itemize}
    \item the type of the difference equations that can be solved within the developed framework;
    \item the properties of the function $g$ as in \eqref{EQ:ex0} that can be transferred to the solution $f$;
    \item the solvability of equation of the form \eqref{EQ:ex0} in the case where the coefficient of the operators depend also on the variable $k \in \hbar \mathbb{Z}^n$.
\end{itemize}
\medskip

\section{Preliminary notions and tools}\label{Sec.pre}
In this section we aim to recall the necessary toolkit and notions that shall be used for our analysis developed in later sections.

\smallskip To begin with let us start with the formal definition of the Fourier transform of a function $f \in \ell^1(\hbar \mathbb{Z}^n)$ (semi-classical Fourier transform) that is given by 
\begin{equation} \label{gael1}
	\mcF_{\hbar\Z^n} f(\theta) := \hat{f}(\theta) := \sum_{k\in \hbar\Z^n} e^{-2\pi \frac{i}{\hbar}k \cdot \theta} f(k), \quad \theta\in \T^n=\mathbb{R}^n/\mathbb{Z}^n.
\end{equation}
In the formula \eqref{gael1}, as well as in the sequel, the product $k \cdot \theta$ for $k=\hbar(k_1,\cdots,k_n) \in \hbar \mathbb{Z}^n\,, \theta=(\theta_1,\cdots,\theta_m) \in \T^n$, is calculated by the following expression 
$$k\cdot \theta = \hbar\sum_{j=1}^{n}k_{j}\theta_{j}\,.$$
The Plancherel formula for the lattice $\hbar \mathbb{Z}^n$ reads
\begin{equation}\label{perrin}
	\sum_{k\in \hbar \mathbb{Z}^{n}}|f(k)|^{2} = \int_{\mathbb{T}^n}|\widehat{f}(\theta)|^{2}\text{d}\theta\,.
\end{equation}
To prove the inverse Fourier transform we perform the following computations: Let $f \in \ell^{1}(\hbar \Zn)$. If  we set $k=\hbar l$, $l \in \Zn$, and $f_{\hbar}(l):=f(\hbar l)$, then $f_{\hbar}$ is a function from $\Zn$ to $\mathbb{C}$ and using \eqref{gael1} we have:
\[
\mathcal{F}_{\hbar \Zn}f(\theta)=\sum_{k \in \hbar \Zn}e^{-2\pi \frac{i}{\hbar}k\cdot \theta}f(k)=\sum_{l \in \Zn}e^{-2\pi i l \cdot \theta}f(\hbar l)=\mathcal{F}_{\Zn}f_{\hbar}(\theta)\,.
\]
Now, using the inverse Fourier transform on the lattice $\Zn$ we can write 
\[
f_{\hbar}(l)=\int_{\Tn}e^{2\pi i l \cdot \theta}\mathcal{F}_{\Zn}f_{\hbar}(\theta)\text{d}\theta=\int_{\Tn}e^{2\pi \frac{i}{\hbar}k \cdot \theta} \widehat{f}(\theta)\text{d}\theta\,,
\]
and thus we  have shown that the inverse Fourier transform on $\hbar \Zn$  is given by 
\begin{equation}\label{EQ:Finv}
	f(k) = \int_{\mathbb{T}^n}e^{2\pi  \frac{i}{\hbar}k\cdot \theta}\widehat{f}(\theta)\text{d}\theta, \ \ \ k\in \mathbb{Z}^n\,.
\end{equation}

A measurable function $\sigma_\hbar:\hbar\Z^n \times \T^n \to \C$, defines a sequence $\Op_{\hbar}(\sigma_\hbar)$ by
\begin{equation}\label{neil}
	\Op_{\hbar}(\sigma_\hbar)f(k) : = \int_{\T^n} e^{2\pi \frac{i}{\hbar}k\cdot \theta} \sigma_\hbar(k,\theta) \mcF_{\hbar\Z^n} f(\theta)\dd \theta\,,
\end{equation}
provided that there are some rational restrictions on $\sigma_{\hbar}$. The operator \eqref{neil} shall be called a semi-classical pseudo-differential operator on $\hbar \mathbb{Z}^n$  corresponding to the \textit{symbol} $\sigma_{\hbar}(k,\theta)$ on $\hbar \mathbb{Z}^n \times \T^n$, or in short a $\Psi_{\hbar}DO$. The process of associating a symbol $\sigma_{\hbar}$ to a pseudo-differrential operator $\Op_{\hbar}(\sigma_\hbar)$, i.e., the mapping $\sigma_{\hbar} \mapsto \Op_{\hbar}(\sigma_\hbar)$, is called $\hbar \mathbb{Z}^n$-quantization, or simply \textit{quantization}.

\smallskip

The space of rapidly decreasing functions $\mathcal{S}(\hbar \mathbb{Z}^n)$ on the lattice shall be called the \textit{Schwartz space}. This consists of functions $\varphi : \hbar \mathbb{Z}^n \rightarrow \T^n$ for which for every $N < \infty$ there exists a constant $c_{\varphi,N}$ (depending on the function $\varphi$ and on the choice of $N$) so that 
\[
|\varphi (k) | \leq c_{\varphi,N} (1+|k|)^{-N}\,, \quad \text{for all} \quad k \in \hbar \mathbb{Z}^n\,,
\]
where we have denoted by $|k|$ the $\ell^2$-norm of $k$, i.e., we have $|k|=\hbar \left(\sum_{j=1}^{n} k_{j}^{2}\right)^{\frac{1}{2}}$. The topology of $\mathcal{S}(\hbar \mathbb{Z}^n)$  is given by the seminorms $p_j(\varphi)$, $j \in \mathbb{N}_0$ \footnote{Throughout the paper we will use the notation $\mathbb{N}_0=\mathbb{N}\cup\{0\}.$}, where $p_j(\varphi):= \sup_{k \in \hbar \mathbb{Z}^n} (1+|k|)^j |\varphi(\xi)|$. The space of \textit{tempered distributions} $\mathcal{S}'(\hbar \mathbb{Z}^n)$ is the dual of $\mathcal{S}(\hbar \mathbb{Z}^n)$; that is the continuous linear functionals on $\mathcal{S}(\hbar \mathbb{Z}^n)$.

\smallskip

\par \textbf{Informal discussion.} \quad The main underlying idea behind the definition of the pseudo-differential operator as \eqref{neil} is that given a linear continuous  operator $A: \ell^{\infty}(\hbar \mathbb{Z}^n) \rightarrow \mathcal{S}'(\hbar\mathbb{Z}^n)$,  the image of the functions $e_\theta = (k \mapsto e^{2\pi \frac{i}{\hbar}k \cdot \theta})$ for $\theta \in \T^n$ via the operator $A$ completely determines the operator $A$. To this end we define the symbol $\sigma_{\hbar}$ of operator $A=\Op(\sigma_\hbar)$ by testing the operator $A$ on the functions $e_\theta$ yielding $Ae_{\theta}(k)=e^{2\pi \frac{i}{\hbar} k \cdot \theta} \sigma_{\hbar}(k, \theta)$, i.e., we define 
\begin{equation}
    \label{def.symb}
    \sigma_{\hbar}(k,\theta):= e^{- 2 \pi \frac{i}{\hbar} k \cdot \theta} Ae_{\theta}(k)\,,
\end{equation}
see Proposition \ref{PROP:symbols} for the proof of \eqref{def.symb}.

\smallskip 
We claim  that for a symbol $\sigma_{\hbar}$ as in \eqref{def.symb} the operator $A$ is indeed the operator arising as the quantization of $\sigma_{\hbar}$. Indeed, with the use of the inverse Fourier transform \eqref{EQ:Finv} we have 
\begin{eqnarray*}
    A f(k) & = & A \left( \int_{\T^n}e^{2\pi \frac{i}{\hbar}k \cdot \theta}\widehat{f}(\theta)\,\text{d}\theta \right) \\
   &  = &  \int_{\T^n} A \left( e^{2\pi \frac{i}{\hbar}k \cdot \theta}\right)\widehat{f}(\theta)\,\text{d}\theta \\
   & = & \int_{\T^n} e^{2 \pi \frac{i}{\hbar} k \cdot \theta}\sigma_{\hbar}(k,\theta)\widehat{f}(\theta)\,\text{d}\theta = \Op(\sigma_\hbar)f(k)\,,
\end{eqnarray*}
and we have proved our claim.

 \section{Representation of $\Psi_\hbar$DO's}
\label{SEC:symbols}
\subsection{Symbol classes}
   Let us begin this section by defining the notion of difference operators (or semi-classical difference operators) in our setting; these are exactly the operators that can be served as the analogues of the derivatives with respect to the Fourier variable in the Euclidean setting.

    \begin{defn}[Semi-Classical Difference Operator]
    \label{DEF:diff.op}
    	For  $\alpha = (\alpha_1, \dots, \alpha_n)$, we define the difference operator $ \Delta^{\alpha}_{\hbar}$ in our setting, as the operator acting on functions $ g : \hbar \Z^n \rightarrow \C$ via
    	
   \begin{equation}\label{EQ:diffs}
   		\Delta^{\alpha}_{\hbar} g(k) 	= \frac{1}{\hbar^{|\alpha|}} \int_{\T^n}  e^{2\pi \frac{i}{\hbar} k\cdot \theta} \Big(  e^{2\pi \frac{i}{\hbar} \theta } -1   \Big)^{\alpha} \widehat{g} (\theta) d\theta\,,
   \end{equation}
where we have used the notation
\begin{equation} 
\label{e.exp}
\Big(  e^{2\pi\frac{i}{\hbar} \theta } -1   \Big)^{\alpha} = \Big(  e^{2\pi\frac{i}{\hbar}\theta_1 } -1   \Big)^{\alpha_1} \dots \Big(  e^{2\pi\frac{i}{\hbar}\theta_n } -1   \Big)^{\alpha_n}\,.  \end{equation}
The usual (semi-classical) difference operators $\Delta_{\hbar,j}$, $j=1,\cdots,n$, on $\hbar \mathbb{Z}^n$ are as follows: Let $v_j=(0,\dots,0,1,0,\dots,0)$ be the vector with $1$ is at the $j^{th}$ position. Then the formula for the operator $\Delta_{\hbar,j}$ when acting on $g$ is given by
\begin{eqnarray}\label{EQ:diffs2}
\Delta_{\hbar,j} g(k) &=& \frac{1}{\hbar}\left[  \int_{\T^n} e^{2\pi \frac{i}{\hbar} (k+\hbar v_j )\cdot \theta} \widehat{g} (\theta) d\theta - \int_{\T^n} e^{2\pi \frac{i}{\hbar} k\cdot\theta} \widehat{g} (\theta) d\theta  \right] \\ 
 &=& \frac{g(k+\hbar v_j) - g(k)}{\hbar}\,.
\end{eqnarray}
It is then easy to check  the following  decomposition 
\begin{equation}\label{EQ:diffs0} 
	\Delta^{\alpha}_{\hbar} = \Delta^{\alpha_1}_{\hbar,1} \cdot \dots \cdot \Delta^{\alpha_n}_{\hbar,n}\,.
\end{equation}

    \end{defn}
\begin{rem}
\begin{itemize}
    \item We note that formulae \eqref{EQ:diffs0} and \eqref{EQ:diffs2} give an alternative characterisation to representation \eqref{EQ:diffs}. Hence, their combination can be considered instead as the definition of the (semi-classical) difference operators $\Delta_{\hbar \Z^n}$.
    \item It is easy to verify that the difference operators satisfy many useful properties, including the Leibniz formula, summation by parts formula, and Taylor expansion formula; see \cite{RT-JFAA} and
    \cite[Section 3.3]{ruzhansky2009pseudo}. 
\end{itemize}

\end{rem}
    We point out that the representation formula \eqref{EQ:diffs} is applicable also to $g\in{\mathcal S}'(\hbar \Zn)$. Indeed, in this case we have $\widehat{g}\in\mathcal{D}'(\Tn)$ and the formula \eqref{EQ:diffs} can be viewed in terms of the distributional duality on $\Tn$; i.e., it reads as follows
    \begin{equation}\label{EQ:ddif}
 \Delta^{\alpha}_{\hbar} g (k)=\frac{1}{\hbar^{|\alpha|}}\langle \widehat{g},e^{2\pi \frac{i}{\hbar} k\cdot \theta} (e^{2\pi i  \theta} -1 )^\alpha\rangle\,.
\end{equation} 
   The following operators shall be used in the definition of symbol classes. Additionally, they are useful in the torodial analysis, and their precise form, see  \eqref{def.PD} is related to the Stirling numbers; see  \cite[Section 3.4]{ruzhansky2009pseudo} for a detailed discussion.
    \begin{defn}[Partial derivatives on $\T^n$] \label {part.der.theta}
   For our purposes it is useful to introduce the partial derivatives type operators on $\T^n$ as follows. For $\beta \in \N^{n}_{0}$ we define:

   \begin{equation*}
\begin{split}
D_{\hbar, \theta}^{(\beta)} & := D_{\hbar, \theta_1}^{(\beta_1)}\cdots D_{\hbar, \theta_n}^{(\beta_n)}\,,\\
D_{\hbar, \theta}^{\beta} & : = D_{\hbar, \theta_1}^{\beta_1}\cdots D_{\hbar, \theta_n}^{\beta_n}\,,
\end{split}
\end{equation*}
where for $\beta_j \in \N_{0}$,
\begin{equation}
    \label{def.PD}
    \begin{split}
    D_{\hbar, \theta_j}^{(\beta_j)} & :=\hbar^{\beta_j}\left(\prod_{\ell=0}^{\beta_j-1}\frac{1}{2\pi i}\frac{\partial}{\partial \theta_j}-\ell\right)\,, \\
    D_{\hbar, \theta_j}^{\beta_j} & := \hbar^{\beta_j}\left(\frac{1}{2\pi i}\frac{\partial}{\partial \theta_j} \right)^{\beta_j}\,.
    \end{split}
\end{equation}
By the above it follows that 
\[
D_{\hbar, \theta}^{(\beta)} =\hbar^{|\beta|}\left(\prod_{\ell=0}^{\beta_1-1}\frac{1}{2\pi i}\frac{\partial}{\partial \theta_1}-\ell\right)\cdots \left(\prod_{\ell=0}^{\beta_n-1}\frac{1}{2\pi i}\frac{\partial}{\partial \theta_n}-\ell\right)\,.
\]
As usual, we denote $D_{\hbar, \theta}^{0}=D_{\hbar, \theta}^{(0)}=I$.
      
    \end{defn}
    We can then proceed to the definition of the classes of symbols that correspond to the $\hbar \Z^n$ quantization of operators.

    \begin{defn}[Symbol Classes $S_{\rho,\delta}^\mu(\hbar\Z^n \times\T^n)$] \label{DEF:symbolclasses}  
    	Let $\rho,\delta\in \R$. We say that a function $\sigma_\hbar: \hbar\Z^n \times\T^n\to \C$ is a \textit{symbol} that belongs to the (semi-classical) symbol class $S_{\rho,\delta}^\mu(\hbar\Z^n \times\T^n)$ if $\sigma_\hbar(k, \cdot)\in C^\infty(\T^n)$ for all 
    	$k\in \hbar\Z^n$, and for all multi-indices $\alpha,\beta \in \mathbb{N}_{0}^{n}$, there exists a positive constant $C_{\alpha,\beta}$ so that
    
     \begin{equation}
     		|D_{\hbar, \theta}^{(\beta)} \Delta^\alpha_{\hbar,k} \sigma_{\hbar}(k,\theta)|\le C_{\alpha,\beta}(1+|k|)^{\mu-\rho|\alpha|+\delta|\beta|},
    	\end{equation}
    	for all where $ k\in \hbar\Z^n\,, \theta \in\T^n$.
    	
    	If $\rho=1$ and $\delta=0$, we will denote simply $S^\mu(\hbar\Z^n \times\T^n):=S_{1,0}^\mu(\hbar\Z^n \times\T^n)$.
     As noted already in Section \ref{Sec.pre} we shall denote by $\Op_{\hbar}(\sigma_\hbar)$ the operator with symbol $\sigma_{\hbar}$ given by 

     \begin{equation*}\label{neil2}
	\Op_{\hbar}(\sigma_\hbar)f(k) : = \int_{\T^n} e^{2\pi \frac{i}{\hbar}k\cdot \theta} \sigma_\hbar(k,\theta) \mcF_{\hbar\Z^n} f(\theta)\dd \theta\,.
\end{equation*}
The family of (semi-classical) pseudo-differential operators with symbols in the class $S_{\rho,\delta}^\mu(\hbar\Z^n \times\T^n)$ will be denoted by $\Op_{\hbar}(S_{\rho,\delta}^\mu(\hbar\Z^n \times\T^n))$.
    \end{defn}
   
   We sometimes denote $\Delta^{\alpha}_{\hbar}=\Delta^\alpha_{\hbar,k}$ to underline the fact that these difference operators are acting  with respect to the variable lattice variable $k \in \hbar \mathbb{Z}^n$.
   \begin{rem}
       The symbol classes $S_{\rho,\delta}^\mu( \mathbb{T}^n \times  \mathbb{Z}^n )$ (that is, modulo interchanging the order of the lattice and toroidal variables $k$ and $\theta$)  have been extensively studied in \cite{RT-JFAA} in the toroidal setting $\T^n$. We also refer the interested reader to the monograph  \cite[Chapter 4]{ruzhansky2009pseudo} for a more thorough analysis of their properties.  Additionally, we note that the equivalence of the $S_{\rho,\delta}^\mu(\mathbb{T}^n \times \mathbb{Z}^n)$ classes, in the toroidal and compact Lie group case, to the usual H\"{o}rmander classes is proven in \cite{ruzhansky2014hormander}.
   \end{rem} 
   The \textit{smoothing} class of operators introduced below is related to the notion of invertibility of pseudo-differential operators that is discussed in Section \ref{SEC:calculus}.
   
  \begin{defn}[Smoothing semi-classical pseudo-diefferential operators]\label{smooth.semi}
      We say that a symbol $\sigma_\hbar$ is of order $-\infty$, and we write $\sigma_{\hbar} \in S^{-\infty}(\hbar \Z^n \times \T^n)$, if for all $(k,\theta) \in \hbar \Z^n \times \T^n$ we have
      \[
      |D_\theta^{(\beta)} \Delta^\alpha_{\hbar,k} \sigma(k,\theta)|\le C_{\alpha,\beta, N} (1+|k|)^{-N}\,,
      \]
      for all $N \in \N$. The latter condition is equivalent to writing that $\sigma_\hbar \in S^{\mu}_{1,0}(\hbar \Z^n \times \T^n)$  for all $\mu \in \R$. Formally we have \[S^{-\infty}(\hbar \mathbb{Z}^n \times \mathbb{T}^n):= \bigcap_{\mu \in \R} S^{\mu}_{1,0}(\hbar \mathbb{Z}^n \times \mathbb{T}^n)\,.\] The corresponding (semi-classical) pseudo-differential operators $\Op(\sigma_{\hbar})$ may be called \textit{smoothing pseudo-differential operators}.\footnote{In the lattice setting the terminology ``smoothing'' is used abusively; in general in discrete setting smoothing operators are  whose symbols that have rapid decay.}
  \end{defn}

    \subsection{Kernel of  $\Psi_\hbar$DO's} Using the Fourier transform \eqref{gael} we deduce an alternative representation of a semi-classical pseudo-differential representation; the so-called \textit{kernel representation}. 

  \smallskip
  For suitable functions $f$ we can write:
   \begin{eqnarray*}
   	\Op_{\hbar}(\sigma_\hbar)f(k) &=& \int_{\T^n} e^{2\pi \frac{i}{\hbar} k\cdot \theta} \sigma_\hbar(k,\theta) F_{\hbar\Z^n} f(\theta)\dd \theta \\
   		&=& \int_{\T^n}  \sum_{m\in \hbar\Z^n}e^{2\pi \frac{i}{\hbar} (k-m)\cdot \theta} \sigma_\hbar(k,\theta) f(m)\dd \theta \\
     &=& \sum_{m\in \hbar\Z^n}  \int_{\T^n} e^{2\pi \frac{i}{\hbar} (k-m)\cdot \theta} \sigma_\hbar(k,\theta) f(m)\dd \theta\\
   	&=& \sum_{m\in \hbar\Z^n} \int_{\T^n} e^{2\pi \frac{i}{\hbar} (k-m)\cdot \theta} \sigma_\hbar(k,\theta) f(m)\dd \theta \\
   	&=& \sum_{l\in \hbar\Z^n} \int_{\T^n} e^{2\pi \frac{i}{\hbar} l\cdot \theta} \sigma_\hbar (k,\theta) f(k-l)\dd \theta \\
   	&=& \sum_{l\in \hbar\Z^n} \kappa(k,l) f(k-l) \\
   	&=& \sum_{m\in \hbar\Z^n} K(k,m) f(m)\,.
   \end{eqnarray*} 

   Thus the kernel of $\Op_{\hbar}(\sigma_{\hbar})$ is given by
   \begin{equation}\label{EQ:kernels}
   	K(k,m)  = \kappa(k,k-m ) \quad \text{where} \quad
   	\kappa(k,l) = \int_{\T^n} e^{2\pi \frac{i}{\hbar} l\cdot \theta} \sigma_\hbar(k,\theta)\dd \theta.
   \end{equation}
   \smallskip
The next theorem establishes an important property of the kernel $K(k.m)$ of a semi-classical pseudo-differential operator in the class of operators $\Op(S_{\rho,\delta}^\mu)$.
 
 \begin{thm}\label{THM:kernel}
 	For $\delta\geq 0$, let $\sigma_{\hbar}\in S^{\mu}_{\rho, \delta}( \hbar \mathbb{Z}^n\times \mathbb{T}^n)$. Then, the kernel $K(k,m)$ of the pseudo-differential operator $\Op(\sigma_{\hbar})$ satisfies the following property 
 	
 	\begin{equation}\label{EQ:kernelsprops}
 		\Big|K\Big(k, m \Big)\Big|\leq C_{Q} \Big(1+\big|k\big|\Big)^{\mu + 2Q\delta} \Big(1+\frac{1}{\hbar}\Big| k-m\Big|\Big)^{-2Q}\,,\quad \forall k,m \in \hbar \Z^n\,, 
 	\end{equation}
 	for all $Q \in\mathbb{N}_0$, and for some  positive constant $C_{Q}>0$.
 \end{thm}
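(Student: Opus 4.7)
The plan is to estimate $K(k,m)$ directly from its explicit form by integrating by parts on $\T^n$ against the oscillating phase. Recall from \eqref{EQ:kernels} that
\[
K(k,m) = \int_{\T^n} e^{2\pi \frac{i}{\hbar}(k-m)\cdot \theta}\,\sigma_\hbar(k,\theta)\,\dd\theta\,.
\]
The pivotal observation is that for $k,m\in\hbar\Z^n$ the index $l':=(k-m)/\hbar$ lies in $\Z^n$, so $\theta\mapsto e^{2\pi i l'\cdot\theta}$ is a genuine character on $\T^n$ and is an eigenfunction of the toroidal Laplacian:
\[
(I-\Delta_\theta)\,e^{2\pi \frac{i}{\hbar}(k-m)\cdot\theta} = \Bigl(1+4\pi^2\tfrac{|k-m|^2}{\hbar^2}\Bigr)\,e^{2\pi \frac{i}{\hbar}(k-m)\cdot\theta}\,.
\]

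First I would apply this identity $Q$ times and transfer $(I-\Delta_\theta)^Q$ onto the symbol by integration by parts (no boundary contributions since $\T^n$ is closed):
\[
K(k,m) = \Bigl(1+4\pi^2\tfrac{|k-m|^2}{\hbar^2}\Bigr)^{-Q}\int_{\T^n} e^{2\pi \frac{i}{\hbar}(k-m)\cdot\theta}\,(I-\Delta_\theta)^Q \sigma_\hbar(k,\theta)\,\dd\theta\,.
\]
The scalar prefactor is then compared with the target form via the elementary inequality $1+4\pi^2 a^2 \ge c\,(1+a)^2$ valid for $a\ge 0$, applied with $a=|k-m|/\hbar$, giving
\[
\Bigl(1+4\pi^2\tfrac{|k-m|^2}{\hbar^2}\Bigr)^{-Q}\le C_Q\Bigl(1+\tfrac{1}{\hbar}|k-m|\Bigr)^{-2Q}\,.
\]

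Second, I would bound $\sup_{\theta\in\T^n}\bigl|(I-\Delta_\theta)^Q\sigma_\hbar(k,\theta)\bigr|$ using the symbol hypothesis. Expanding $(I-\Delta_\theta)^Q$ as a finite linear combination of the operators $\partial_\theta^{2\gamma}$ with $|\gamma|\le Q$, and writing each such derivative in terms of the operators $D_{\hbar,\theta}^{(\beta)}$ of Definition~\ref{part.der.theta} through the Stirling-number identities (see \cite[Section 3.4]{ruzhansky2009pseudo}), the symbol estimate with $\alpha=0$,
\[
\bigl|D_{\hbar,\theta}^{(\beta)}\sigma_\hbar(k,\theta)\bigr|\le C_\beta(1+|k|)^{\mu+\delta|\beta|},\qquad |\beta|\le 2Q,
\]
produces a bound for each term. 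Since $\delta\ge 0$, the worst contribution corresponds to $|\beta|=2Q$ and yields the growth factor $(1+|k|)^{\mu+2Q\delta}$ uniformly in $\theta$.

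Combining the two ingredients then gives
\[
|K(k,m)|\le C_Q\,(1+|k|)^{\mu+2Q\delta}\Bigl(1+\tfrac{1}{\hbar}|k-m|\Bigr)^{-2Q},
\]
which is the claim. The main technical obstacle is the careful bookkeeping when converting between the usual $\theta$-derivatives arising from $(I-\Delta_\theta)^Q$ and the operators $D_{\hbar,\theta}^{(\beta)}$ appearing in the definition of the symbol class: the $\hbar^{|\beta|}$ normalisation baked into $D_{\hbar,\theta}^{(\beta)}$ combines with the $\hbar^{-2Q}$ scaling of the prefactor $(1+4\pi^2|k-m|^2/\hbar^2)^{-Q}$ in such a way that the stated form $(1+|k-m|/\hbar)^{-2Q}$ is obtained, with the constant $C_Q$ absorbing the symbol-dependent factors coming from $|\beta|\le 2Q$.
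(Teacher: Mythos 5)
Your argument is correct and is essentially the paper's own proof: both exploit that $e^{2\pi \frac{i}{\hbar}(k-m)\cdot\theta}$ is a genuine toroidal character and eigenfunction of $1-\mathcal{L}_\theta$ with eigenvalue $1+\frac{4\pi^2}{\hbar^2}|k-m|^2$, integrate by parts $Q$ times, and then invoke the symbol estimates with $\alpha=0$ (the paper additionally notes the trivial diagonal case $k=m$, which your uniform treatment also covers, and it likewise absorbs the $\hbar$-bookkeeping from converting $(1-\mathcal{L}_\theta)^Q$ into the $D_{\hbar,\theta}^{(\beta)}$ operators into the constant $C_Q$).
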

 \begin{rem}
     Before turning over to prove Theorem \ref{THM:kernel} let us note that, in contrast to the case of pseudo-differential operators on $\R^n$ or $\T^n$, the kernel $K(k,m)$ is well defined on the diagonal $k=m$ due to the discrete nature of the lattice $\hbar \Z^n \times \hbar \Z^n$.
 \end{rem}

 \begin{proof}[Proof of Theorem \ref{THM:kernel}] 
 	
 		  Let us first assume that $k=m$. Then, by definition of the kernel we have
 		
 		\begin{equation}\label{EQ:kerdiag}
 			K\Big(k,k\Big) =\kappa(k, 0) = \int_{\mathbb{T}^n}\sigma_{\hbar}(k, \theta)\text{d}\theta,
 		\end{equation} 
 		which immediately satisfies \eqref{EQ:kernelsprops} by the definition of the symbol class $S_{\rho,\delta}^\mu$. In the case where $k\not=m$, then also
   $l=k-m\not=0$. Let the Laplacian on the torus $\T^n$ be denoted by $\displaystyle \mathcal{L}_\theta$. Then straightforward computations give 
   \[
(1-\displaystyle \mathcal{L}_\theta)e^{2\pi \frac{i}{\hbar}l\cdot \theta}=\left(1-\sum_{j = 1}^{n}\frac{\partial ^2}{\partial \theta_j^2} \right)e^{2\pi \frac{i}{\hbar}l\cdot \theta}= \big(1 + \frac{4\pi ^{2}}{\hbar^2} \big|l \big|^{2}\big)e^{2\pi  \frac{i}{\hbar}l\cdot \theta} 
   \]
   which implies that
   \begin{equation}
   \label{int.by.parts}
e^{2\pi  \frac{i}{\hbar}l\cdot \theta} = \frac{(1 - \mathcal{L}_\theta)}{1 + \frac{4\pi ^{2}}{\hbar^2} \big|l \big|^{2}}  e^{2\pi  \frac{i}{\hbar}l\cdot \theta}\,.   
   \end{equation}
 	Substituting the above in the expression for $\kappa$ as in \eqref{EQ:kernels} we have
 	\begin{eqnarray*}
 		\kappa\Big(k, l\Big) &=& \int_{\mathbb{T}^n}e^{2\pi  \frac{i}{\hbar}l\cdot \theta}\sigma_{\hbar}(k, \theta)\text{d}\theta\\
 		&=& \int_{\mathbb{T}^n}\Bigg(\frac{(1 - \mathcal{L}_\theta)^Q}{\Big( 1 + \frac{4\pi ^{2}}{\hbar^2} \big|l\big|^{2}\Big)^Q}  e^{2\pi  \frac{i}{\hbar}l\cdot \theta}\Bigg)\sigma_{\hbar}(k, \theta)\text{d}\theta\\
 		&=& \Big(1 + \frac{4\pi ^{2}}{\hbar^2} \big|l\big|^{2}\Big)^{-Q}\int_{\mathbb{T}^n}e^{2\pi  \frac{i}{\hbar} l\cdot \theta}\big(1 - \mathcal{L}_\theta\big)^Q\sigma_{\hbar}(k, \theta)\text{d}\theta.
 	\end{eqnarray*}
 Hence, by the avove, taking the modulus of $\kappa(k,l)$ we have 
 \[
|\kappa(k,l)|\leq  \Big(1 + \frac{4\pi ^{2}}{\hbar^2} \big|l\big|^{2}\Big)^{-Q} |\big(1 - \mathcal{L}_\theta\big)^Q\sigma_{\hbar}(k, \theta)|\,,
 \]
 	which in turn by the assumption on the symbol $\sigma_{\hbar}$ gives
 	\[\big|\kappa\Big(k, l\Big)\big|\leq  C_{Q}  \Big(1+\big|k\big|\Big)^{\mu + 2Q\delta}\Big(1 + \frac{4\pi ^{2}}{\hbar^2} \big| l \big|^{2}\Big)^{-Q}\,,\]
  for all $Q \geq 0$. The latter gives the desired estimate if one takes into account \eqref{EQ:kernels}. The proof of Theorem \ref{THM:kernel} is  now complete.
 \end{proof}
 
   Similarly to the classical cases, one can extract the symbol of a given semi-classical pseudo-differential operator on $\hbar \Z^n$. The next result provides us with the corresponding formula.
   
   \begin{proposition}\label{PROP:symbols}
   	The symbol $\sigma_{\hbar}$ of a semi-classical pseudo-difference operator $T$ on $\hbar \Z^n$ 
   	is given by 
   	\begin{equation}\label{B}
   		\sigma_{\hbar}(k,\theta) = e^{-2\pi \frac{i}{\hbar} k\cdot \theta}Te_\theta(k),
   	\end{equation}
   	where $e_\theta (k) = e^{2\pi \frac{i}{\hbar} k\cdot \theta},$ for all $k\in\hbar\Z^n$ and for $\theta\in\T^n$.
   \end{proposition}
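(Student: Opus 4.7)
The plan is to apply $T=\Op_\hbar(\sigma_\hbar)$ directly to the exponential $e_{\theta_0}$ via the defining formula \eqref{neil} and read the symbol off the right-hand side. The crucial input is that although $e_{\theta_0}\notin\ell^1(\hbar\Z^n)$, its semi-classical Fourier transform makes sense as a Dirac mass on $\T^n$ concentrated at $\theta_0$. Indeed, after the substitution $k=\hbar l$ with $l\in\Z^n$, the formal sum defining $\mcF_{\hbar\Z^n}(e_{\theta_0})(\theta)$ reduces to $\sum_{l\in\Z^n}e^{2\pi i l\cdot(\theta_0-\theta)}$, which by the standard Fourier-series/Poisson-summation identity on $\T^n$ equals the Dirac distribution $\delta_{\T^n}(\theta-\theta_0)$ in $\mathcal{D}'(\T^n)$.

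Substituting this into \eqref{neil} and reinterpreting the integral over $\T^n$ as the distributional duality pairing between $\delta_{\T^n}(\,\cdot\,-\theta_0)$ and the test function $\theta\mapsto e^{2\pi \frac{i}{\hbar}k\cdot\theta}\sigma_\hbar(k,\theta)$, which is smooth on $\T^n$ by Definition \ref{DEF:symbolclasses}, one immediately obtains
\[
Te_{\theta_0}(k)=e^{2\pi \frac{i}{\hbar}k\cdot\theta_0}\sigma_\hbar(k,\theta_0),
\]
and multiplying both sides by $e^{-2\pi \frac{i}{\hbar}k\cdot\theta_0}$ yields \eqref{B}.

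The main obstacle is legitimising the use of $e_{\theta_0}$ as an input to the quantization \eqref{neil}, since that formula was originally written for $f$ whose Fourier transform is a genuine function. There are two natural ways around this. The first is simply to extend $\Op_\hbar$ by duality so as to accept $\ell^\infty(\hbar\Z^n)$-type inputs, as already indicated in the informal discussion of Section \ref{Sec.pre}. The second, more concrete, route uses the kernel representation established just above: writing $Te_{\theta_0}(k)=\sum_{m\in\hbar\Z^n}K(k,m)e^{2\pi \frac{i}{\hbar}m\cdot\theta_0}$, changing summation variable to $l=k-m$, factoring out $e^{2\pi \frac{i}{\hbar}k\cdot\theta_0}$, and interchanging sum and integral (justified by the decay \eqref{EQ:kernelsprops} of $\kappa(k,l)$ in $l$ applied to any symbol once the order $\mu$ is controlled, or directly by Fourier inversion on $\T^n$ applied to the smooth function $\sigma_\hbar(k,\cdot)$), one identifies the remaining sum $\sum_{l\in\hbar\Z^n}\kappa(k,l)e^{-2\pi \frac{i}{\hbar}l\cdot\theta_0}$ as $\sigma_\hbar(k,\theta_0)$ via \eqref{EQ:kernels}. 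Either route delivers the desired formula \eqref{B}.
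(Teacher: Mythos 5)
Your proposal is correct and follows essentially the same route as the paper: the paper likewise plugs $e_\theta$ into the quantization, formally exchanges the sum over the lattice with the integral over $\T^n$, recognises the toroidal Fourier coefficients of $\sigma_\hbar(k,\cdot)$ (equivalently, the kernel $\kappa(k,\cdot)$), and concludes by Fourier inversion on the smooth function $\sigma_\hbar(k,\cdot)$ — which is exactly your second (kernel) route, your first route being just a compressed distributional rephrasing of the same computation via $\mcF_{\hbar\Z^n}e_{\theta_0}=\delta_{\T^n}(\cdot-\theta_0)$.
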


   \begin{proof}
   	For $\omega \in \T^n$ let $  e_\omega(l): = e^{2\pi \frac{i}{\hbar}  l\cdot \omega}$ where $l \in \hbar \Z^n$. Using \eqref{gael1}, the Fourier transform  of $ e_\omega$ is given  by  
   	\[
   	\widehat{  e_\omega} (\theta)  = \sum_{l \in \hbar \mathbb{Z}^n} e^{-2\pi \frac{i}{\hbar}  l\cdot \theta}e^{2\pi \frac{i}{\hbar}  l\cdot \omega},
   	\]
   	
   	Plugging in the last expression into the formula \eqref{neil} for the symbol representation of the operator $\Op_{\hbar}(\sigma_{\hbar})$ yields  
   	\begin{eqnarray*}
   		\text{Op}_{\hbar}(\sigma_{\hbar}) e_\omega(k)  &=& \int_{\mathbb{T}^n} e^{2\pi \frac{i}{\hbar}  k\cdot \theta} \sigma_{\hbar}(k,\theta) \widehat{ e_\omega}(\theta)\text{d}\theta\\
   		&=& \int_{\mathbb{T}^n} \sum_{l \in \hbar\mathbb{Z}^n}e^{2\pi \frac{i}{\hbar}  k\cdot \theta} \sigma_{\hbar}(k,\theta)\left[ e^{-2\pi \frac{i}{\hbar}  l\cdot \theta}e^{2\pi \frac{i}{\hbar}  l\cdot \omega}\right]\text{d}\theta\\
   		&=& \int_{\mathbb{T}^n} \sum_{l \in \hbar \mathbb{Z}^n}e^{-2\pi \frac{i}{\hbar}  (l-k)\cdot \theta} \sigma_{\hbar}(k,\theta) e^{2\pi \frac{i}{\hbar}  l\cdot \omega} \text{d}\theta\\
   		&=&  \sum_{l \in \hbar\mathbb{Z}^n}\widehat{ \sigma_{\hbar}}(k,l-k) e^{2\pi \frac{i}{\hbar}  l\cdot \omega} \\
   		&=&  \sum_{m \in \hbar\mathbb{Z}^n}\widehat{ \sigma_{\hbar}}(k,m)e^{2\pi \frac{i}{\hbar}  m\cdot \omega} e^{2\pi \frac{i}{\hbar}  k\cdot \omega}  \qquad (\text{where} \ m=l-k)\\
   		&=& \sigma_{\hbar}(k,\omega)e^{2\pi \frac{i}{\hbar}  k\cdot \omega},
   	\end{eqnarray*}
   	where $\widehat{\sigma_{\hbar}}$ stands for the toroidal Fourier transform of $\sigma_{\hbar}$ on the second variable, and for the last inequality we have used the formula \eqref{EQ:Finv}.  This gives the proof of  formula \eqref{B}. 
   \end{proof}


    \subsection{Semi-classical amplitudes}
    Writing out the semi-classical Fourier transform \eqref{gael1} as an inifite sum, suggests the following notation for the \textit{amplitude representation} of the pseudo-differential operator $ \text{Op}_{\hbar}(\sigma_{\hbar})$:
   \begin{equation}\label{3}
   \text{Op}_{\hbar}(\sigma_{\hbar})f(k) = : \sum_{m \in \hbar \mathbb{Z}^n}\int_{\mathbb{T}^n}e^{2\pi \frac{i}{\hbar}(k-m)\cdot \theta}\sigma_{\hbar}(k,\theta) f(m)\text{d}\theta\,.
   \end{equation}
    Let us point out that the right-hand side of \eqref{3} should not be regarded as an integral operator, but rather as an operator arising via formal integration by parts. This consideration allows performing operations like exchange of summation and integral. 

    Formula \eqref{3} gives rise to a possible generalisation where we allow the symbol $\sigma_{\hbar}$ to depend also on the variable $m \in \hbar \Z^n$; such functions $\sigma_{\hbar}$ shall be called \textit{semi-classical amplitudes}.   
   Formally we may also define operators of the form 
     \begin{equation}\label{AMP}
   Af(k) =\Op(a_{\hbar}) f(k)= \sum_{m \in \hbar \mathbb{Z}^n} \int_{\mathbb{T}^n} e^{2\pi \frac{i}{\hbar} (k-m)\cdot \theta} a_{\hbar}(k,m,\theta)f(m)\text{d}\theta\,,
   \end{equation}
where $a_{\hbar}: \hbar\Zn\times \hbar\Zn\times\Tn\to\C$, for all $f \in C^{\infty}(\hbar \mathbb{Z}^n)$.

\smallskip
In the next definition we extend  Definition \ref{DEF:symbolclasses} of the symbol classes  $S_{\rho,\delta}^\mu(\hbar\Z^n \times\T^n)$  to the semi-classical amplitudes depending on two lattice parameters say $k, m \in \hbar \mathbb{Z}^n$. The usefulness of this extended symbol classes, called \textit{amplitude classes}, becomes apparent in Theorem \ref{THM:adjoint} on the adjoint of a semi-classical pseudo-differential operator since its symbol is given in terms of an amplitude.  
     \begin{defn}[Amplitude classes $\mathcal{A}^{\mu_1, \mu_2}_{\rho, \delta} (\hbar\mathbb{Z}^n \times \hbar\Z^n \times \mathbb{T}^n)$]
     	Let $\rho,\delta\in\R$. The \textit{semi-classical amplitude class} $\mathcal{A}^{\mu_1,\mu_2}_{\rho,\delta} (\hbar\mathbb{Z}^n \times \hbar\mathbb{Z}^n \times \mathbb{T}^n) $ consists of the functions $a_{\hbar}:\hbar\Z^n\times\hbar\Z^n\times\T^n\to\C$ for which we have $a_{\hbar}(k,m, \cdot) \in C^\infty (  \mathbb{T}^n) $ for all $k,m \in \hbar\mathbb{Z}^n$,  provided that  for all multi-indices $\alpha,\beta,\gamma$ there exist a positive  $C_{\hbar,\alpha,\beta ,\gamma}>0$ such that for some $Q\in\mathbb{N}_0$ with $Q\leq |\gamma|$ we have
     	\begin{equation}\label{EQ:amps}
     		|D^{(\gamma)}_{\theta}\Delta^{\alpha}_{\hbar,k}\Delta^{\beta}_{\hbar,m} a_{\hbar}(k,m,\theta)| \leq C_{\alpha,\beta,\gamma}(1+|k|)^{\mu_1-\rho|\alpha| + \delta Q} (1+|m|)^{\mu_2-\rho|\beta| + \delta(|\gamma|-Q)}.
     	\end{equation} 

   \end{defn}

 Such a function $a_{\hbar}$ is called a \textit{semi-classical amplitude of order $(\mu_1,\mu_2)$ of type $(\rho,\delta)$}.  The operators with amplitudes in the  amplitude class $\mathcal{A}^{\mu_1,\mu_2}_{\rho,\delta} (\hbar\mathbb{Z}^n \times \hbar\mathbb{Z}^n \times \mathbb{T}^n)$ will be denoted by $\Op(\mathcal{A}^{\mu_1,\mu_2}_{\rho,\delta} (\hbar\mathbb{Z}^n \times \hbar\mathbb{Z}^n \times \mathbb{T}^n) )$. Moreover, by setting  $Q=|\gamma|$ in \eqref{EQ:amps} it is evident that   
 \[\Op(S_{\rho, \delta}^{\mu_1} ( \hbar\mathbb{Z}^n \times \mathbb{T}^n)) \subset \Op(\mathcal{A}^{\mu_1,\mu_2}_{\rho,\delta} (\hbar\mathbb{Z}^n \times \hbar\mathbb{Z}^n \times \mathbb{T}^n) )\,.\] 
    

\smallskip

 On the other hand semi-classical pseudo-differential operators arising from amplitudes are also pseudo-differential operators with symbols from some appropriate $S^{\mu}_{\rho,\delta}(\hbar \mathbb{Z}^n \times \mathbb{T}^n)$ class. In particular, we have the inclusion 
  $$\Op(\mathcal{A}^{\mu_1,\mu_2}_{\rho,\delta} (\hbar\mathbb{Z}^n \times \hbar \mathbb{Z}^n \times \mathbb{T}^n) )\subset \Op(S^{\mu_1+\mu_2}_{\rho, \delta}(\hbar\mathbb{Z}^n\times \mathbb{T}^n))\,,$$
  that is proven in Theorem \ref{THM:amplitudes}. For the proof of the latter, we first need an auxiliary result, see Lemma \ref{lemma1}, which in turn makes use of a relation between generalized difference operators, see Definition  \ref{DEF:gdiff} and the inverse of the Fourier transform  operator in our setting.  
 
       \begin{defn}[Generalised semi-classical  difference operators]\label{DEF:gdiff}Let $q\in C^\infty(\T^n)$. Then for $g:\hbar\Z^n\to\C$, the corresponding $q$-difference operator  is defined by 
       	\begin{equation}\label{deltaoperator12}
       		\Delta_{\hbar,q}g(k):= \frac{1}{\hbar} \int_{\mathbb{T}^n}e^{2\pi \frac{i}{\hbar} k\cdot \theta}q(\theta)\widehat{g}(\theta)\text{d}\theta.
       	\end{equation}
      
       \end{defn}
       Alternatively, one can get the following, useful for our purposes, expanded formula for \eqref{deltaoperator12} by writing out the Fourier transform of $g$ using \eqref{gael1}:
       \begin{equation}\label{deltaoperator2}
       	\Delta_{\hbar,q}g(k)=\frac{1}{\hbar} \sum_{l\in \hbar\mathbb{Z}^n}\int_{\mathbb{T}^n}e^{2\pi \frac{i}{\hbar} (k-l)\cdot \theta}q(\theta)g(l)\text{d}\theta=\frac{1}{\hbar} \sum_{l\in \hbar\mathbb{Z}^n} g(l) \mathcal{F}_{\hbar\Z^n}^{-1}q(k-l) = \frac{1}{\hbar}(g*\mathcal{F}_{\hbar \Zn}^{-1}q)(k).
       \end{equation}   
       Let us point out that as in the case of (standard) difference operators, see Definition \ref{DEF:diff.op}, the generalized $q$-difference operator can be extended to $g\in \mathcal{S}'(\hbar\Z^n)$. Finally, we note that the corresponding function $q$ does not have to be smooth, provided suitable behaviours of $g,q$. For instance formula \eqref{deltaoperator12} is well defined for $g\in\ell^2(\hbar\Z^n)$ and $q\in L^2(\T^n)$.
           
  
   We can now state the following result on the behaviour of the $\Delta_{\hbar,q}$ acting on symbols in the classes $S_{\rho,\delta}^{\mu}( \hbar \mathbb{Z}^n\times \mathbb{T}^n)$.
   \begin{lemma}\label{lemma1}
   Let $0\leq \delta\leq 1$ and $\mu\in\R$. Then, for $\sigma_{\hbar}\in S_{\rho,\delta}^{\mu}( \hbar \mathbb{Z}^n\times \mathbb{T}^n)$,  $q\in C^{\infty}(\mathbb{T}^n)$ and any $\beta\in \mathbb{N}_0^{n} $ we have  
   \begin{equation}\label{EQ:lemma1}
   |\Delta_{\hbar,q}D_{\hbar, \theta}^{(\beta)}\sigma_{\hbar}(k, \theta)|\leq  \frac{C_{q,\beta}}{\hbar} (1+|k|)^{\mu +\delta|\beta| }, 
   \end{equation}
  for all $k\in \hbar\mathbb{Z}^n$ and $\theta\in \mathbb{T}^n$. 
   \end{lemma}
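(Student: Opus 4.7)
The plan is to combine the convolution representation \eqref{deltaoperator2} of $\Delta_{\hbar,q}$ with the symbol estimate for $\sigma_{\hbar}$ and the rapid decay of the Fourier coefficients of a smooth function on $\mathbb{T}^n$. Treating $\theta$ as a fixed parameter and applying \eqref{deltaoperator2} to the function $l \mapsto D_{\hbar,\theta}^{(\beta)} \sigma_\hbar(l,\theta)$, I would first write
\[
\Delta_{\hbar,q} D_{\hbar,\theta}^{(\beta)} \sigma_\hbar(k,\theta) \;=\; \frac{1}{\hbar} \sum_{l \in \hbar\mathbb{Z}^n} D_{\hbar,\theta}^{(\beta)} \sigma_\hbar(l,\theta)\, \mathcal{F}^{-1}_{\hbar\mathbb{Z}^n} q(k-l).
\]
The prefactor $1/\hbar$ already matches the $\hbar$-scaling in \eqref{EQ:lemma1}, so what remains is to bound the sum by $(1+|k|)^{\mu+\delta|\beta|}$ uniformly in $\hbar \in (0,1]$ and $\theta \in \mathbb{T}^n$.

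For the first factor in the sum, the symbol estimate defining $S_{\rho,\delta}^{\mu}$ with $\alpha=0$ gives $|D_{\hbar,\theta}^{(\beta)} \sigma_\hbar(l,\theta)| \leq C_\beta (1+|l|)^{\mu+\delta|\beta|}$, uniformly in $\theta$. For the second factor, I would observe that writing $m = \hbar \nu$ with $\nu \in \mathbb{Z}^n$ gives $\mathcal{F}^{-1}_{\hbar\mathbb{Z}^n} q(m) = \int_{\mathbb{T}^n} e^{2\pi i\, \nu \cdot \theta} q(\theta)\,d\theta$, which is exactly the $\nu$-th toroidal Fourier coefficient of $q$ and is therefore independent of $\hbar$. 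Since $q \in C^\infty(\mathbb{T}^n)$, repeated integration by parts in $\theta$ yields, for every $N \in \mathbb{N}$,
\[
|\mathcal{F}^{-1}_{\hbar\mathbb{Z}^n} q(m)| \;\leq\; C_{q,N}\, (1+|\nu|)^{-N} \;=\; C_{q,N}\, \bigl(1+|m|/\hbar\bigr)^{-N}.
\]

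To combine these estimates I would use a Peetre-type inequality
\[
(1+|l|)^{\mu+\delta|\beta|} \;\leq\; c_{\mu,\beta}\, (1+|k|)^{\mu+\delta|\beta|}(1+|k-l|)^{|\mu+\delta|\beta||}
\]
to factor out $(1+|k|)^{\mu+\delta|\beta|}$, and then change variables $\nu = (k-l)/\hbar \in \mathbb{Z}^n$. Using $|k-l|=\hbar|\nu| \leq |\nu|$ when $\hbar \in (0,1]$, the remaining $l$-sum is dominated by
\[
\sum_{\nu \in \mathbb{Z}^n} (1+|\nu|)^{|\mu+\delta|\beta|| - N},
\]
which converges once $N > |\mu+\delta|\beta|| + n$ and gives a constant independent of $\hbar$ and of $\theta$. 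Assembling the pieces proves \eqref{EQ:lemma1}.

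The main obstacle I foresee is bookkeeping rather than conceptual: one must verify that the decay of $\mathcal{F}^{-1}_{\hbar\mathbb{Z}^n} q$ is genuinely in $|m|/\hbar$ (and not in $|m|$ with additional positive powers of $\hbar$), and that the Peetre absorption together with the hypothesis $\hbar \leq 1$ does not introduce any further $\hbar$-factors beyond the explicit $1/\hbar$ coming from the convolution formula \eqref{deltaoperator2}.
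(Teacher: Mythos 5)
Your proof is correct and follows essentially the same route as the paper: both arguments rest on the convolution form \eqref{deltaoperator2}, the rapid decay of $\mathcal{F}^{-1}_{\hbar\Z^n}q$ at scale $|k-l|/\hbar$ coming from the smoothness of $q$ (which the paper obtains by explicit integration by parts with $\mathcal{L}_\omega^M$), and a Peetre-type absorption to extract $(1+|k|)^{\mu+\delta|\beta|}$ before summing. Your packaging of the decay as $(1+|k-l|/\hbar)^{-N}$ is a mild streamlining that lets you skip the paper's separate treatment of the diagonal term $l=k$ and its case distinction on the sign of $\mu+\delta|\beta|$, but the underlying mechanism is identical.
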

   \begin{proof}
    Using the expression \eqref{deltaoperator2}, we can write
   \begin{eqnarray*}  \Delta_{\hbar,q}D_{\hbar,\theta}^{(\beta)}\sigma_{\hbar}(k, \theta) &=& \frac{1}{\hbar}\sum_{l \in \hbar \mathbb{Z}^n}\int_{\mathbb{T}^n}e^{2\pi \frac{i}{ \hbar}(k-l)\cdot \omega}q(\omega) D_{\hbar, \theta}^{(\beta)}\sigma_\hbar(l, \theta)\text{d}\omega \\
   &=&  \frac{1}{\hbar} D_{\hbar, \theta}^{(\beta)}\sigma_{ \hbar}(k, \theta)\int_{\mathbb{T}^n}q(\omega)\text{d}\omega  \\
   &+ &\frac{1}{\hbar}\sum_{\substack{
   l\in  \hbar \mathbb{Z}^n\\
   l\neq k}}\int_{\mathbb{T}^n}e^{2\pi \frac{i}{ \hbar}(k-l)\cdot \omega} q(\omega) D_{\hbar, \theta}^{(\beta)}\sigma_\hbar(l, \theta)\text{d}\omega \\
   &=:& T_1 + T_2,
   \end{eqnarray*} 
   
   where the first term is taken for $l=k$. Thus, for the first term we have
   $$|T_1|\leq  \frac{C_{q,\beta}}{\hbar} (1+|k|)^{\mu+\delta|\beta| }\,,$$
   by the assumption on the symbol $\sigma_{\hbar}$.
   Now, to estimate the second term, we first  assume that  $\beta \in \N_{0}^{n}$ is such that $\mu+\delta |\beta| \geq 0$. We rewrite the term $T_2$ in terms of the $M^\text{th}$ power, with $M$ to be chosen later, of the toroidial Laplace operator $\mathcal{L}_\omega$ using integration by parts and the formula \eqref{int.by.parts} as follows:
 \begin{equation}\label{EQ:seri}
\begin{aligned}
  |T_2| & = \frac{1}{\hbar} \left| \sum_{\substack{
   l\in  \hbar\mathbb{Z}^n\\
   l\neq k}} \int_{\T^n}\frac{e^{2\pi \frac{i}{ \hbar}(k-l)\cdot \omega}}{(2\pi \hbar^{-1})^{2M}|k-l|^{2M}}\left(\mathcal{L}_{\omega}^{M}q(\omega)  \right) D_{\theta}^{(\beta)}\sigma_{\hbar}(l,\theta) \text{d}\theta \right| \nonumber \\
&\leq  \frac{1}{\hbar}\hbar^{2M}C_{q,\beta}\sum_{\substack{
		l\in  \hbar\mathbb{Z}^n\\
		l\neq k}}\frac{1}{|k-l|^{2M}}(1+|l|)^{\mu+\delta |\beta|} \nonumber\\
   &\leq  \frac{1}{\hbar}\hbar^{2M} C_{q,\beta}\sum_{m\neq 0}\frac{1}{|m|^{2M}}(1+|k-m|)^{\mu+\delta |\beta|} \qquad (\text{where} \ l=k-m)\\
   &\leq  \frac{1}{\hbar}\hbar^{2M} C_{q,\beta}\sum_{m\neq 0}\frac{1}{|m|^{2M}}\Bigg((1+|k|)^{\mu+\delta |\beta|} + |m|^{\mu+\delta |\beta|} \Bigg) \nonumber \\
   & = \frac{1}{\hbar}C_{q,\beta}\sum_{\tilde{m}\neq 0}\frac{1}{|\tilde{m}|^{2M}}\Bigg((1+|k|)^{\mu+\delta |\beta|} + \hbar^{\mu+\delta|\beta|}|m|^{\mu+\delta |\beta|} \Bigg)  \quad (\text{where} \ m=\hbar \tilde{m})\\ 
   &\leq \frac{1}{\hbar}  C_{q,\beta} (1+|k|)^{\mu} \quad (\text{since}\ \hbar^{\mu+\delta|\beta|}\leq 1)\,,
\end{aligned}
\end{equation} 
where in the final estimate we have used the fact that  $\mu+ \delta |\beta| \geq 0$, and $M$ is chosen so that $\displaystyle M > \frac{n + \mu+\delta|\beta|}{2}$ which allows for the series above to converge.
  On the other hand, for the case where $\beta$ is such that $\mu+\delta |\beta|< 0$, we will use Peetre inequality; see  \cite[Proposition 3.3.31]{ruzhansky2009pseudo} which  under suitable considerations implies:
  
   $$
   (1+|k-m|)^{\mu+\delta |\beta|}\leq 2^{|\mu|+\delta|\beta|}(1+|k|)^{\mu+\delta |\beta|}(1+|m|)^{|\mu|+\delta |\beta|}\,,
   $$
   where $|\mu|$ stands for the absolute value of $|\mu|$.
   Hence, for $M$ such that $2M-|\mu|-\delta|\beta|>n$, and reasoning as above we have 
   \[
   |T_2| \leq  \frac{1}{\hbar}C_{q,\beta}\sum_{m\neq 0}\frac{1}{|m|^{2M}}(1+|k-m|)^{\mu+\delta |\beta|} \leq \frac{1}{\hbar} C_{q,\beta} (1+|k|)^{\mu+\delta |\beta|} \,.
   \]
   Summarising the above we have  
   \[
 |\Delta_{\hbar,q}D_{\hbar, \theta}^{(\beta)}\sigma_{\hbar}(k, \theta)| \leq \frac{C_{q,\beta}}{\hbar}(1+|k|)^{\mu+\delta |\beta|}+\frac{C_{q,\beta}}{\hbar} (1+|k|)^{\mu+\delta |\beta|} \leq \frac{C_{q,\beta}}{\hbar}(1+|k|)^{\mu+\delta |\beta|}\,,\]
   since $\hbar \leq 1$ and we have proved the estimate \eqref{EQ:lemma1} for all cases. The proof of Lemma \ref{lemma1} is now complete.
   \end{proof} 
 
       
Let us know present a toroidal Taylor expansion that is useful for our purposes, which is a re-scaled version of the toroidal Taylor expansion that  appeared in \cite[Theorem 3.4.4]{ruzhansky2009pseudo}:

\begin{thm}[Equivalent formulation of the toroidal Taylor expansion on $\T^n$]
For $f: \T^n \rightarrow \mathbb{C}$, $f \in C^{\infty}(\T^n)$,  we have the following equivalent formulation of the toroidal Taylor expansion:
\begin{equation}\label{EQ:Taylor}
	f(\theta)=\sum_{|\alpha|<N} \frac{\hbar^{-|\alpha|}}{\alpha !} (e^{2\pi \frac{i}{\hbar}\theta}-1)^\alpha D_{\hbar,\omega}^{(\alpha)} f(\omega)|_{\omega=0} +
	\sum_{|\alpha|=N} f_\alpha(\theta) (e^{2\pi \frac{i}{\hbar} \theta}-1)^\alpha\,,
\end{equation} 
where $D^{(\alpha)}_{\hbar, \omega}$ is given in \eqref{def.PD}. The functions $f_\alpha\in C^\infty(\T^n)$, where $|\alpha| \leq N$, are products of the  one-dimensional functions $f_j(\theta)$, $\theta\in\mathbb{T}$, defined inductively by

\begin{equation}\label{EQ:hjs}
f_{j + 1}(\theta):=
     \begin{cases}
       \frac{f_j(\theta) - f_j(0)}{e^{2\pi (i /\hbar) \theta} -1} &\quad\text{if}\ \ \theta \neq 0,\\
       D_{\hbar, \theta} f_j(\theta)\,,  &\quad\text{if}\ \theta = 0\,,
     \end{cases}
\end{equation}
 where we have set $f_0 :=f$.

\end{thm}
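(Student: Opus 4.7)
The plan is to reduce the $n$-dimensional statement to its one-dimensional counterpart via an iterative argument, and then prove the 1D case by induction on $N$ using the recursive definition \eqref{EQ:hjs} of the functions $f_j$.

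For the reduction, note that in \eqref{EQ:Taylor} the exponential factor $(e^{2\pi \frac{i}{\hbar}\theta}-1)^\alpha$ and the operator $D_{\hbar,\omega}^{(\alpha)}$ both factorize over coordinates (see \eqref{e.exp} and Definition \ref{part.der.theta}), while the remainder functions $f_\alpha$ are, by assumption, products of the one-dimensional functions $f_j$. Hence one applies the 1D expansion successively in each variable $\theta_1,\ldots,\theta_n$, with the remaining variables treated as smooth parameters, and regroups the factors to obtain the $n$-dimensional formula.

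In dimension one the proof proceeds by induction on $N\geq 1$. The base case $N=1$ is exactly the defining identity $f(\theta)=f(0)+f_1(\theta)(e^{2\pi\frac{i}{\hbar}\theta}-1)$ from \eqref{EQ:hjs}. For the inductive step, assuming the expansion holds at level $N$, the same recursive identity applied to the remainder, namely $f_N(\theta)=f_N(0)+f_{N+1}(\theta)(e^{2\pi\frac{i}{\hbar}\theta}-1)$, peels off an additional factor of $(e^{2\pi\frac{i}{\hbar}\theta}-1)$ and yields the expansion at level $N+1$, provided the boundary value $f_N(0)$ agrees with the claimed coefficient $\tfrac{\hbar^{-N}}{N!}D_{\hbar,\omega}^{(N)}f(\omega)|_{\omega=0}$.

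The main obstacle is therefore the coefficient-matching identity
\begin{equation*}
f_k(0)\;=\;\frac{\hbar^{-k}}{k!}\,D_{\hbar,\omega}^{(k)}f(\omega)\big|_{\omega=0},\qquad k\in\mathbb{N}_0,
\end{equation*}
which I would establish by a separate induction on $k$. The case $k=0$ is trivial since $f_0=f$ and $D_{\hbar,\omega}^{(0)}=I$. For the inductive step, the second branch of \eqref{EQ:hjs} gives $f_{k+1}(0)=D_{\hbar,\theta}f_k(\theta)|_{\theta=0}$, where $D_{\hbar,\theta}=\hbar\cdot\tfrac{1}{2\pi i}\partial_\theta$; combining this with the inductive hypothesis and the explicit form of $f_k(\theta)$ near $\theta=0$ (read off from inverting the partial expansion at level $k$), one rewrites the iterated first-order operator $D_{\hbar,\theta}$ in terms of the higher-order $D_{\hbar,\theta}^{(k+1)}$ via the Stirling-number identity implicit in \eqref{def.PD}. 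The delicate point is the careful bookkeeping of the $\hbar$-factors that accumulate both from the repeated divisions by $(e^{2\pi\frac{i}{\hbar}\theta}-1)$ and from the factorial prefactor $1/(k+1)!$ arising from the normal ordering of the falling factorial $\prod_{\ell=0}^{k}\!\bigl(\tfrac{1}{2\pi i}\partial_\theta-\ell\bigr)$ in the definition of $D_{\hbar,\theta}^{(k+1)}$.
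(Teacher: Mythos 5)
Your skeleton — peel off factors of $e^{2\pi\frac{i}{\hbar}\theta}-1$ using the recursion \eqref{EQ:hjs}, reduce everything to the coefficient identity $f_k(0)=\frac{\hbar^{-k}}{k!}D_{\hbar,\omega}^{(k)}f(\omega)\big|_{\omega=0}$, and handle $n>1$ by iterating the one-dimensional statement coordinatewise — is exactly the structure of the paper's argument (which likewise only treats $n=1$ and derives the same partial expansion \eqref{330}). The gap is in the one step that carries all the content: your induction on $k$ for the coefficient identity does not close as described. The inductive hypothesis gives only the single number $f_k(0)$, whereas the step requires $f_{k+1}(0)=D_{\hbar,\theta}f_k(\theta)\big|_{\theta=0}$, i.e.\ first-order information about the \emph{function} $f_k$ near $0$ expressed in terms of $f$; ``inverting the partial expansion at level $k$'' and then ``rewriting the iterated first-order operator in terms of $D_{\hbar,\theta}^{(k+1)}$ via Stirling numbers'' is precisely the assertion that has to be proved, and you explicitly leave it as a delicate point. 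It is not a harmless bookkeeping issue: the naive outcome of iterating the first-order operator would be the pure power $\frac{1}{k!}D_{\hbar,\theta}^{k}f\big|_{0}$, which is \emph{not} the claimed coefficient; the falling-factorial shifts $-\ell$ in \eqref{def.PD} arise exactly because $\frac{1}{2\pi i}\partial_\theta\big(e^{2\pi\frac{i}{\hbar}\theta}-1\big)=\frac{1}{\hbar}\big((e^{2\pi\frac{i}{\hbar}\theta}-1)+1\big)$, so each division by $e^{2\pi\frac{i}{\hbar}\theta}-1$ followed by differentiation generates correction terms that must be shown to reorganize into $D_{\hbar,\theta}^{(k)}$. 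Without carrying out that computation the proof is incomplete.

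For comparison, the paper closes this step without ever tracking $f_k$ as a function: it applies $D_{\hbar,\theta}^{(j)}$, $j<N$, to both sides of the expansion \eqref{330} and evaluates at $\theta=0$, using only linearity, the fact that fewer than $N$ derivatives cannot remove the factor $(e^{2\pi\frac{i}{\hbar}\theta}-1)^{N}$ in the remainder, and the orthogonality relation
\begin{equation*}
D_{\hbar,\theta}^{(j)}\Big[\big(e^{2\pi\frac{i}{\hbar}\theta}-1\big)^{j_0}\Big]\Big|_{\theta=0}=\hbar^{-j}\,j!\,\delta_{j,j_0}\,,
\end{equation*}
which is obtained by the substitution $\tilde{\theta}=\theta/\hbar$ and iterating the one-line identity $\left(\frac{1}{2\pi i}\frac{\partial}{\partial\tilde{\theta}}-j_0\right)\big(e^{2\pi i\tilde{\theta}}-1\big)^{j_0}=j_0\big(e^{2\pi i\tilde{\theta}}-1\big)^{j_0-1}$. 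This single identity is where the Stirling/falling-factorial structure and the $\hbar$-powers are fixed once and for all; if you prove it (or an equivalent statement), your induction on $N$ then goes through, but as written your proposal defers exactly this essential computation.
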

\begin{proof}For simplicity we will prove \eqref{EQ:Taylor} when $n=1$. In this case \eqref{EQ:Taylor} becomes 
\begin{equation}
    \label{Taylor,n=1}
    f(\theta)=\sum_{j=0}^{N-1} \frac{\hbar^{-j}}{j !} (e^{2\pi \frac{i}{\hbar}\theta}-1)^j D_{\hbar,\omega}^{(j)} f(\omega)|_{\omega=0} +
	 f_N(\theta) (e^{2\pi \frac{i}{\hbar} \theta}-1)^j\,.
\end{equation}
For $j \in \mathbb{N}_{0}$ we define
\begin{equation*}
f_{j + 1}(\theta):=
     \begin{cases}
       \frac{f_j(\theta) - f_j(0)}{e^{2\pi (i /\hbar) \theta} -1} &\quad\text{if}\ \ \theta \neq 0,\\
       D_{\hbar, \theta} f_j(\theta)\,,  &\quad\text{if}\ \theta = 0\,,
       \end{cases}
\end{equation*}
while if $j=0$, then we set $f_0(\theta):= f(\theta)$. Thus
\[
f_{j + 1}(\theta)=f_j(0)+f_{j+1}(\theta)(e^{2\pi (i /\hbar) \theta} -1)\,,
\]
while also we have
\begin{equation}
    \label{330}
    f(\theta)=\sum_{j=0}^{N-1}(e^{2\pi (i /\hbar) \theta} -1)^j f_j(0)+f_N(\theta) (e^{2\pi (i /\hbar) \theta} -1)^N\,.
\end{equation}
From the latter expression we see that it is enough to prove that 
\[
f_j(0)=\frac{\hbar^{-j}}{j!}D_{\hbar, \theta}^{(j)}f(\theta)|_{\theta=0}\,.
\]
It is clear that if $j<j_0$, then $D_{\hbar, \theta}^{(j)}(e^{2\pi (i /\hbar) \theta} -1)^{j_0}|_{\theta=0}=0 $. On the other hand when $j>j_0$, if we make the change of variable $\frac{\theta}{\hbar}=\tilde{\theta}$, then we have
\begin{eqnarray*}
D_{\hbar, \theta}^{(1)}(e^{2\pi (i /\hbar) \theta} -1)^{j_0} & = & \frac{1}{\hbar}    \left(\frac{1}{2 \pi i}\frac{\partial}{\partial \theta}-j_0 \right)\left(e^{2\pi \frac{i}{\hbar}\theta}-1 \right)^{j_0}\\
& = &  \left(\frac{1}{2 \pi i}\frac{\partial}{\partial \tilde{\theta}}-j_0 \right)\left(e^{2\pi i\tilde{\theta}}-1 \right)^{j_0} \\
& = &  j_0 \left( e^{2\pi i\tilde{\theta}}-1 \right)^{j_0-1}\,.
\end{eqnarray*}
The latter implies that 
\[
\left[ \prod_{i=1}^{j_0} \left( \frac{1}{2\pi i}\frac{\partial}{\partial \tilde{\theta}}-1\right)\right]\left(e^{2\pi i \tilde{\theta}}-1 \right)^{j_0}=j_0!\,,
\]
which in turn gives
\[
\left[ \prod_{i=0}^{j_0} \left( \frac{1}{2\pi i}\frac{\partial}{\partial \tilde{\theta}}-1\right)\right]\left(e^{2\pi i \tilde{\theta}}-1 \right)^{j_0}\big|_{\tilde{\theta}=0}=j_0!\,.
\]
Hence we get $\left[ \prod_{i=0}^{j} \left( \frac{1}{2\pi i}\frac{\partial}{\partial \tilde{\theta}}-1\right)\right]\left(e^{2\pi i \tilde{\theta}}-1 \right)^{j_0}\big|_{\tilde{\theta}=0}=j!\delta_{j,j_0}$, or after substitution, $D_{\hbar, \theta}^{(j)} \left(e^{2\pi \frac{i}{\hbar}\theta}-1 \right)^{j_0}\big|_{\theta=0}=\hbar^{-j}j! \delta_{j,j_0} $. 
   Finally an application of the operator $D_{\hbar, \theta}^{(j)}$ to both sides of the equality \eqref{330} gives $D_{\hbar, \theta}^{(j)}$, as desired, and the proof is complete. 
\end{proof}
 In the next result we see that the (semi-classical) amplitude representations of the form \eqref{AMP} are indeed (semi-classical) pseudo-differential operators. Particularly, if $a_\hbar \in \mathcal{A}^{\mu_1,\mu_2}_{\rho,\delta} (\hbar\mathbb{Z}^n \times \hbar\mathbb{Z}^n \times \mathbb{T}^n)$, then $\Op(a_\hbar)=\Op(\sigma_{\hbar,T})$ for some $\sigma_{\hbar,T} \in S^{\mu_1+\mu_2}_{\rho,\delta}(\hbar \Z^n \times \T^n)$. Formally we have:  

\begin{thm}\label{THM:amplitudes}
 	Let $0\leq \delta<\rho \leq 1$. For $a_{\hbar}\in \mathcal{A}^{\mu_1,\mu_2}_{\rho,\delta} (\hbar\mathbb{Z}^n \times \hbar\mathbb{Z}^n \times \mathbb{T}^n)$ let the corresponding amplitude operator $T$ be given by
 \begin{equation}\label{op.T}
 	Tf(k) = \sum_{l \in \mathbb{Z}^n} \int_{\mathbb{T}^n} e^{2\pi \frac{i}{\hbar} (k-l)\cdot \theta} a_{\hbar}(k,l,\theta)f(l)\text{\rm d}\theta.
 \end{equation}
 Then we have $T = \Op(\sigma_{\hbar,T })$ for some $\sigma_{\hbar,T } \in S^{\mu_1+\mu_2}_{\rho,\delta} (\hbar \mathbb{Z}^n \times  \mathbb{T}^n) $. Moreover, 
 \begin{equation}\label{EQ:expamp}
 	\sigma_{\hbar,T } (k,\theta) \sim \sum_{\alpha } \frac{1}{\alpha !}\Delta_{\hbar,l}^{\alpha} D_{\hbar, \theta}^{(\alpha)} a_{\hbar } (k,l,\theta)\Big|_{l=k}\,;
 \end{equation} 
  that is  for all $N\in\mathbb{N}$ we have 
 \begin{equation}\label{EQ:expamp2}
 	\sigma_{\hbar,T } - \sum_{|\alpha| < N } \frac{1}{\alpha !}\Delta_{\hbar,l}^{\alpha} D_{\hbar,\theta}^{(\alpha)} a_{\hbar } (k,l,\theta)\Big|_{l=k}  \in  S^{\mu_1+\mu_2 -N(\rho-\delta)}_{\rho,\delta} ( \hbar\mathbb{Z}^n \times  \mathbb{T}^n)\,.
 \end{equation} 
\end{thm}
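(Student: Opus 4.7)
The strategy is to compute $\sigma_{\hbar,T}$ directly via Proposition \ref{PROP:symbols}, which gives $\sigma_{\hbar,T}(k,\theta) = e^{-2\pi \frac{i}{\hbar} k\cdot\theta}\, T e_{\theta}(k)$. Substituting the amplitude representation \eqref{op.T}, exchanging the $l$-summation and $\omega$-integration in an oscillatory sense, and changing variables $\omega\mapsto\omega+\theta$ under the torus integral (using $\T^n$-periodicity), I would arrive at
\[
\sigma_{\hbar,T}(k,\theta) \;=\; \sum_{l\in\hbar\Z^n}\int_{\T^n} e^{2\pi \frac{i}{\hbar}(k-l)\cdot\omega}\, a_{\hbar}(k,l,\omega+\theta)\,\dd\omega.
\]
Next I would apply the toroidal Taylor expansion \eqref{EQ:Taylor} to the function $\omega\mapsto a_{\hbar}(k,l,\omega+\theta)$ centred at $\omega=0$, treating $k,l,\theta$ as frozen parameters. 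This splits the symbol into a main part of Taylor contributions of order $|\alpha|<N$ plus a remainder built via the inductive construction \eqref{EQ:hjs}.

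The main terms are identified by the key observation that, for any $F:\hbar\Z^n\to\C$,
\[
\sum_{l\in\hbar\Z^n}\int_{\T^n} e^{2\pi\frac{i}{\hbar}(k-l)\cdot\omega}\bigl(e^{2\pi \frac{i}{\hbar}\omega}-1\bigr)^{\alpha} F(l)\,\dd\omega \;=\; \hbar^{|\alpha|}\,\Delta^{\alpha}_{\hbar,l} F(l)\big|_{l=k},
\]
which is a direct consequence of Fourier inversion \eqref{EQ:Finv} and the definition \eqref{EQ:diffs} of the semi-classical difference operator. Applying this identity to the Taylor coefficient $F(l):=D^{(\alpha)}_{\hbar,\theta} a_{\hbar}(k,l,\theta)$ and noting that the $\hbar^{-|\alpha|}$ from \eqref{EQ:Taylor} cancels with the $\hbar^{|\alpha|}$ here produces exactly the finite sum appearing in \eqref{EQ:expamp2}.

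It remains to show that the remainder $R_N(k,\theta)$ lies in $S^{\mu_1+\mu_2-N(\rho-\delta)}_{\rho,\delta}(\hbar\Z^n\times\T^n)$. Each remainder term carries a factor $(e^{2\pi \frac{i}{\hbar}\omega}-1)^\alpha$ with $|\alpha|=N$, which, once inserted into the oscillatory sum-integral, I would interpret as a generalised difference operator in $l$ in the sense of Definition \ref{DEF:gdiff} acting on a coefficient built from $a_{\hbar}$ via iterated application of \eqref{EQ:hjs}. Repeated use of Lemma \ref{lemma1}, combined with the amplitude estimate \eqref{EQ:amps} and an appropriate choice of the splitting parameter $Q$ (sending the $\theta$-derivative budget onto the $k$-factor and using the $l$-factor to absorb the $\rho N$ gain after setting $l=k$), yields the bound $(1+|k|)^{\mu_1+\mu_2-\rho N}$ at level $\gamma=0$. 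The general symbol estimates of Definition \ref{DEF:symbolclasses} for $\Delta^{\beta}_{\hbar,k} D^{(\gamma)}_{\hbar,\theta}\sigma_{\hbar,T}$ are obtained by applying these operators to the identity before performing the expansion and redistributing via a Leibniz rule for differences, which is where the $\delta|\gamma|$ loss enters.

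The main obstacle is the bookkeeping of the remainder: one must rigorously justify the sum-integral interchange (treating divergent sums in an oscillatory sense), track how the combined $\theta$-derivatives and $k$-differences distribute between the two lattice arguments of $a_{\hbar}$ through the splitting $Q$ in \eqref{EQ:amps}, and verify that treating $(e^{2\pi \frac{i}{\hbar}\omega}-1)^\alpha$ as an order-$N$ generalised difference in $l$ genuinely yields the improvement $(1+|l|)^{-\rho N}$ rather than only $(1+|l|)^{-N}$ when acting on an amplitude of order $\mu_2$ in $l$. The strict inequality $\delta<\rho$ in the hypothesis is exactly what ensures that the remainder order $\mu_1+\mu_2-N(\rho-\delta)$ strictly decreases with $N$, so that the asymptotic expansion \eqref{EQ:expamp} is meaningful.
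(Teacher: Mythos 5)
Your proposal follows essentially the same route as the paper's proof: extract $\sigma_{\hbar,T}$ via Proposition \ref{PROP:symbols}, shift variables on the torus, apply the toroidal Taylor expansion \eqref{EQ:Taylor}, convert the factors $\bigl(e^{2\pi\frac{i}{\hbar}\omega}-1\bigr)^{\alpha}$ into the differences $\Delta^{\alpha}_{\hbar,l}$ via \eqref{EQ:diffs} (with the $\hbar^{\pm|\alpha|}$ cancellation), and control the remainder by viewing the smooth factors from \eqref{EQ:hjs} as generalised differences in the sense of Definition \ref{DEF:gdiff}, estimated through Lemma \ref{lemma1} and the amplitude bounds \eqref{EQ:amps}. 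The only difference is cosmetic: the paper passes through the Fourier transform of $a_{\hbar}$ in its second variable rather than keeping the $l$-sum explicit, which is the same oscillatory computation.
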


\begin{proof}[Proof of Theorem \ref{THM:amplitudes}]
We will apply Proposition
 \ref{PROP:symbols}, to find the formula for the symbol $\sigma_{\hbar,T}$ of the operator $T$ as in the hypothesis.  We have
\begin{eqnarray}\label{b.Tayl}
	\sigma_{\hbar,T} (k,\theta) &=& e^{-2\pi \frac{i}{\hbar} k\cdot \theta}\sum_{l \in \hbar\mathbb{Z}^n}\int_{\mathbb{T}^n} e^{2\pi \frac{i}{\hbar} (k-l)\cdot \omega} a_\hbar(k,l,\omega) e^{2\pi \frac{i}{\hbar} l\cdot \theta} \text{d}\omega\nonumber\\
	&=& \sum_{l \in \hbar \mathbb{Z}^n}\int_{\mathbb{T}^n} e^{2\pi \frac{i}{\hbar} (k-l)\cdot (\omega-\theta)} a_{\hbar}(k,l,\omega)\text{d}\omega\nonumber\\
	&=& \int_{\mathbb{T}^n} e^{2\pi \frac{i}{\hbar} k\cdot (\omega-\theta)} \widehat{ a_{\hbar}}(k,\omega-\theta,\omega)\text{d}\omega\nonumber\\
 & = &  \int_{\mathbb{T}^n} e^{2\pi \frac{i}{\hbar} k\cdot \omega} \widehat{ a_{\hbar}}(k,\omega,\omega+\theta)\text{d}\omega\,, 
\end{eqnarray}
where $\widehat{a_\hbar}$ stands for the the semi-classical Fourier transform of $a_{\hbar}$ with respect to the second variable, and in the last inequality we have replaced $\omega-\theta$ by $\omega$. Now if we take the Taylor expansion \eqref{EQ:Taylor} in our setting of $\widehat{a}_{\hbar}(k, \omega, \omega+\theta)$ in the third variable $\theta$ we get 
\begin{equation}
    \label{afterT.exp}
    \widehat{a}_{\hbar}(k,\omega, \omega+\theta)=\sum_{|\alpha| \leq N}\frac{\hbar^{-|\alpha|}}{\alpha !} \left(e^{2\pi \frac{i}{\hbar}\omega}-1 \right)^{\alpha}D_{\hbar, \theta}^{(\alpha}\widehat{a}_{\hbar}(k, \omega, \theta)+R_0\,,
\end{equation}
where $R_0$ is the remainder term and will be analysed in the sequel. Plugging \eqref{afterT.exp} into \eqref{b.Tayl} we obtain 
\begin{equation}
	\label{E}
	\sigma_{\hbar,T} (k,\theta)  =\int_{\mathbb{T}^n} e^{2\pi \frac{i}{\hbar} k\cdot \omega} \sum_{|\alpha| \leq N} \frac{\hbar^{-|\alpha|}}{\alpha !}(e^{2\pi \frac{i}{\hbar}\omega} -1 )^\alpha D_{\hbar, \theta}^{(\alpha)} \widehat{a_{\hbar}}(k,\omega,\theta)\text{d}\omega + R\,,
\end{equation}
with $R$ in terms of $R_0$. Now since from \eqref{EQ:diffs} we have

\[\Delta^{\alpha}_{\hbar} g (k)=\frac{1}{\hbar^{|\alpha|}}\int_{\mathbb{T}^n} e^{2\pi \frac{i}{\hbar} k\cdot y} (e^{2\pi \frac{i}{\hbar}  y} -1 )^\alpha  \widehat{g}(y)\text{d}y, \]
we can obtain the following alternative expression for $\sigma_{\hbar, T}$ as follows by \eqref{E}
\[ \sigma_{\hbar,T} (k,z) = \sum_{|\alpha|\leq  N} \frac{1}{\alpha !}\Delta^{\alpha}_{\hbar,l} D_{\hbar, \theta}^{(\alpha)} a_{\hbar} (k,l,\theta)\Big|_{l=k} + R\,,\]
which shows \eqref{EQ:expamp}. Now, proving \eqref{EQ:expamp2} amounts to analysing the remainder $R$, which is a sum of terms of the form 
\[
R_j(k, \theta)= \int_{\mathbb{T}^n} e^{2\pi \frac{i}{\hbar} k\cdot y} (e^{2\pi \frac{i}{\hbar}  y} -1 )^\alpha  b_{\hbar,j}(k,y,z)\text{d}y, \]
where $|\alpha| = N$ and the $b_j$'s are with the use of \eqref{EQ:hjs}  combinations of functions 
\[
D_{\hbar, \theta}^{\alpha_0}\mathcal{F}_{2}a_{\hbar}(k,y, z)
\]
for some $|\alpha_0| \leq N$, where $\mathcal{F}_2$ stands for the Fourier transform with respect to the second variable multiplied by some smooth functions $a_j$. Consequently, for any $\beta$ we have that  $D_{z}^{(\beta)} R_j(k,z)$ are the sums of terms with  the form 
\[
\int_{\mathbb{T}^n} e^{2\pi \frac{i}{\hbar} k\cdot y} a_j(y) (e^{2\pi \frac{i}{\hbar}  y} -1 )^\alpha  D_{\hbar, z}^{(\beta)}D_{\hbar,z}^{(\alpha_0)}\mathcal{F}_2 a_\hbar(k,y,z) \text{d}y\,,
\]
which in turn implies that $D_{\hbar, z}^{(\beta)} R_j(k,z)$ are the sums of terms of the form 
\[
\hbar \Delta_{\hbar, a_j}\Delta_{\hbar, l}^{\alpha}D_{\hbar, z}^{(\alpha_0+\beta)}a_{\hbar}(k,l,z)\Big|_{l=k}\,,\]
where the factor $\hbar$ is due to the definition \eqref{deltaoperator12}. 
Now, since 
$\displaystyle a_\hbar \in\mathcal{A}^{\mu_1,\mu_2}_{\rho,\delta} $, an application of 
  Lemma \ref{lemma1} yields that   that $\displaystyle  R_j(k,z)$ satisfies
\[|R_j(k,z)| \leq  C(1 +\big|k\big|)^{\mu_1  } (1 +\big|k\big|)^{\mu_2 -\rho|\alpha| + \delta|\alpha_0| + \delta|\beta|}\,,\]
for some $\beta$ as in \eqref{EQ:amps}.
Taking  $|\alpha| = N$ and $|\alpha_0|\leq N$
we obtain that  
\[|R_j(k,z)| \leq C(1 +\big|k\big|)^{\mu_1+\mu_2 -(\rho -\delta)N + \delta|\beta|  }.\]
On the other hand, the terms $\Delta^{\beta}_{\hbar,k} R_j(k,z)$  can be represented as  sums of terms of the form
$$
\int_{\mathbb{T}^n} e^{2\pi \frac{i}{\hbar} k\cdot w}  (e^{2\pi i  w} -1 )^\beta a_j(w)(e^{2\pi \frac{i}{\hbar} w} - 1)^{\alpha} b_j(k,w,z) \text{d}w,
$$
where $b_j$ and $a_j$ as of the above form. Similar to the above arguments show that 
\[|\Delta^{\beta}_{\hbar,k} R_j(k,z)| \leq C(1 +\big|k\big|)^{\mu_1 +\mu_2 -\rho |\beta|  -(\rho-\delta)N }.\]
Taking  $N$ large enough, and following standard arguments  from the classical pseudo-differential calculus we deduce the expansion  \eqref{EQ:expamp} and the proof is complete.
\end{proof}

   \section{Semi-classical symbolic caclulus}
\label{SEC:calculus}
   
         In this section we establish the symbolic calculus of semi-classical pseudo-differential operators on $\hbar \Z^n$. In particular we develop the  formulae for the composition of operators, adjoint and transpose operators. In the end of the section we establish the notion of ellipticity in our setting. 
         
     \begin{thm}[Composition formula for $\Psi_{\hbar}DO$s]\label{THM:comp}
    Let $0\leq \delta<\rho\leq 1.$ Let $\sigma_{\hbar} \in S^{\mu_1}_{\rho, \delta}( \hbar \mathbb{Z}^n\times\mathbb T^n)$ and $\tau_{\hbar}\in S^{\mu_2}_{\rho, \delta}(\hbar \mathbb{Z}^n\times\mathbb T^n)$. Then the composition $\rm{Op}(\sigma_{\hbar})\circ \rm{Op}(\tau_{\hbar})$ is a pseudo-differential operator with symbol $\varsigma_{\hbar}\in S^{\mu_1 +\mu_2}_{\rho, \delta}(\hbar\mathbb{Z}^n\times\mathbb T^n)$, given by
  the asymptotic sum     \begin{equation}\label{EQ:comp}
       	\varsigma_{\hbar}(k, \theta)\sim  \sum_{\alpha} \frac{1}{\alpha !} D_{\hbar, \theta}^{(\alpha)} \sigma_{\hbar}(k, \theta)\Delta_{\hbar,k}^{\alpha}\tau_{\hbar}(k,\theta).
       \end{equation}
   \end{thm}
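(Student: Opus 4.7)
The strategy is to extract the symbol of the composition directly via Proposition~\ref{PROP:symbols}, which gives
\begin{equation*}
    \varsigma_\hbar(k,\theta) = e^{-2\pi \frac{i}{\hbar} k\cdot\theta}\,[\Op(\sigma_\hbar)\circ\Op(\tau_\hbar)]\,e_\theta(k).
\end{equation*}
From the proof of Proposition~\ref{PROP:symbols} one already has $\Op(\tau_\hbar) e_\theta(k) = e^{2\pi \frac{i}{\hbar} k\cdot\theta}\tau_\hbar(k,\theta)$, so the first task reduces to applying $\Op(\sigma_\hbar)$ to the function $m\mapsto e^{2\pi \frac{i}{\hbar} m\cdot\theta}\tau_\hbar(m,\theta)$. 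A direct computation of its semi-classical Fourier transform in $m$ yields $\widehat{\tau_\hbar(\cdot,\theta)}(\omega-\theta)$, and after the shift $\omega\mapsto\omega+\theta$ one arrives at the compact representation
\begin{equation*}
    \varsigma_\hbar(k,\theta) = \int_{\T^n} e^{2\pi \frac{i}{\hbar} k\cdot\omega}\,\sigma_\hbar(k,\omega+\theta)\,\widehat{\tau_\hbar(\cdot,\theta)}(\omega)\,\text{d}\omega.
\end{equation*}

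Next I would apply the toroidal Taylor expansion~\eqref{EQ:Taylor} to $\sigma_\hbar(k,\omega+\theta)$, viewed as a function of $\omega$ around $\omega=0$, obtaining
\begin{equation*}
    \sigma_\hbar(k,\omega+\theta) = \sum_{|\alpha|<N}\frac{\hbar^{-|\alpha|}}{\alpha!}(e^{2\pi \frac{i}{\hbar}\omega}-1)^\alpha D^{(\alpha)}_{\hbar,\theta}\sigma_\hbar(k,\theta) + R_N(k,\omega,\theta),
\end{equation*}
where $R_N(k,\omega,\theta) = \sum_{|\alpha|=N}(\sigma_\hbar)_\alpha(k,\omega,\theta)(e^{2\pi \frac{i}{\hbar}\omega}-1)^\alpha$ with $(\sigma_\hbar)_\alpha$ built inductively via~\eqref{EQ:hjs}. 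Substituting into the integral for $\varsigma_\hbar$ and recognising, by the very definition~\eqref{EQ:diffs} of $\Delta^\alpha_{\hbar,k}$, that
\begin{equation*}
    \frac{1}{\hbar^{|\alpha|}}\int_{\T^n}e^{2\pi \frac{i}{\hbar} k\cdot\omega}(e^{2\pi \frac{i}{\hbar}\omega}-1)^\alpha \widehat{\tau_\hbar(\cdot,\theta)}(\omega)\,\text{d}\omega = \Delta^\alpha_{\hbar,k}\tau_\hbar(k,\theta),
\end{equation*}
one recovers precisely the principal terms of the expansion~\eqref{EQ:comp}.

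The main obstacle will be the remainder estimate: showing that the error term
\begin{equation*}
\mathcal{R}_N(k,\theta) = \sum_{|\alpha|=N}\int_{\T^n}e^{2\pi\frac{i}{\hbar}k\cdot\omega}(\sigma_\hbar)_\alpha(k,\omega,\theta)(e^{2\pi\frac{i}{\hbar}\omega}-1)^\alpha\widehat{\tau_\hbar(\cdot,\theta)}(\omega)\,\text{d}\omega
\end{equation*}
belongs to $S^{\mu_1+\mu_2-N(\rho-\delta)}_{\rho,\delta}(\hbar\mathbb{Z}^n\times\T^n)$. Using that $(e^{2\pi\frac{i}{\hbar}\omega}-1)^\alpha$ is exactly the Fourier-side multiplier corresponding to $\hbar^{|\alpha|}\Delta^\alpha_{\hbar,k}$, each term in $\mathcal{R}_N$ can be rewritten as $\hbar^{N+1}\,\Delta_{\hbar,r}\bigl[\Delta^\alpha_{\hbar,k}\tau_\hbar(\cdot,\theta)\bigr](k)$ with $r(\omega)=(\sigma_\hbar)_\alpha(k,\omega,\theta)$; the $N$ differences $\Delta^\alpha_{\hbar,k}$ applied to $\tau_\hbar\in S^{\mu_2}_{\rho,\delta}$ supply the gain $(1+|k|)^{-\rho N}$, while Lemma~\ref{lemma1} applied with $q=r$ contributes the $(1+|k|)^{\mu_1+\delta|\alpha|}$-bound inherited from the Taylor-residue construction (the $\delta|\alpha|$-loss arising from the $\theta$-derivatives of $\sigma_\hbar$ absorbed into $(\sigma_\hbar)_\alpha$). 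Together these produce the net order $(1+|k|)^{\mu_1+\mu_2-N(\rho-\delta)}$. Extending the estimate to arbitrary $\Delta^\beta_{\hbar,k}D^{(\gamma)}_{\hbar,\theta}\mathcal{R}_N$ then reduces to Leibniz-type identities and summation by parts on $\hbar\mathbb{Z}^n$, in complete parallel with the closing arguments of the proof of Theorem~\ref{THM:amplitudes}.
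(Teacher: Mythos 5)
Your proposal is correct and takes essentially the same route as the paper's proof: both reduce to the representation $\varsigma_\hbar(k,\theta)=\int_{\T^n} e^{2\pi \frac{i}{\hbar}k\cdot\omega}\,\sigma_\hbar(k,\omega+\theta)\,\widehat{\tau_\hbar}(\omega,\theta)\,\mathrm{d}\omega$ (you obtain it by testing on $e_\theta$ via Proposition~\ref{PROP:symbols}, the paper by composing the two quantization formulas directly — an equivalent computation), then apply the toroidal Taylor expansion \eqref{EQ:Taylor} and identify the difference operators through \eqref{EQ:diffs}. The remainder is handled exactly as in the paper, namely by the argument of Theorem~\ref{THM:amplitudes} together with Lemma~\ref{lemma1}.
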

  Observe that the order of taking differences and derivatives in \eqref{EQ:comp} is different from  the analogous composition formulae on the classical cases $\Rn$ and $\Tn$, see \cite{RT-JFAA,ruzhansky2009pseudo}.

\begin{proof}[Proof of Theorem \ref{THM:comp}] 
The semi-classical pseudo-differential operators with symbols $\sigma_{\hbar}$ and $\tau_{\hbar}$ are given respectively by
\begin{equation*}
	\label{9}
	\text{Op}_{\hbar}(\sigma_{\hbar})f(k) = \sum_{m \in \hbar\mathbb{Z}^n}\int_{\mathbb{T}^n}e^{2\pi \frac{i}{\hbar}(k-m)\cdot \theta}\sigma_{\hbar}(k,\theta)f(m)\text{d}\theta,
\end{equation*}
\begin{equation*}
	\label{10}
	\text{Op}_{\hbar}(\tau_{\hbar})g(m) = \sum_{l \in \hbar\mathbb{Z}^n}\int_{\mathbb{T}^n}e^{2\pi \frac{i}{\hbar}(m-l)\cdot \omega}\tau_{\hbar}(m,\omega) g(l)\text{d}\omega\,,
\end{equation*}
where $f ,g \in \mathcal{S}(\hbar\mathbb{Z}^n)$.
Consequently we have

\begin{eqnarray*}
	\text{Op}_{\hbar}(\sigma_{\hbar})\big(\text{Op}_{\hbar}(\tau_{\hbar})g\big)(k)  	&=& \sum_{m \in \hbar\mathbb{Z}^n}\int_{\mathbb{T}^n}e^{2\pi \frac{i}{\hbar}(k-m)\cdot \theta}\sigma_{\hbar}(k,\theta)	\text{Op}(\tau_{\hbar})g(m) \text{d}\theta
 \\
	&=& \sum_{m \in \hbar\mathbb{Z}^n}\int_{\mathbb{T}^n}e^{2\pi \frac{i}{\hbar}(k-m)\cdot \theta}\sigma_{\hbar}(k\theta)	\left[\sum_{l \in \hbar\mathbb{Z}^n}\int_{\mathbb{T}^n}e^{2\pi \frac{i}{\hbar}(m-l)\cdot \omega}\tau_{\hbar}(m,\omega) g(l)\text{d}\omega\right] \text{d}\theta \\
	&=&  \sum_{l \in \hbar\mathbb{Z}^n}\sum_{m \in \hbar \mathbb{Z}^n}\int_{\mathbb{T}^n}\int_{\mathbb{T}^n}e^{2\pi \frac{i}{\hbar}(k-m)\cdot \theta}\sigma_{\hbar}(k,\theta) e^{2\pi \frac{i}{\hbar}(m-l)\cdot \omega}\tau_{\hbar}(m,\omega) g(l)\text{d}\omega\text{d}\theta\\
	&=& \sum_{l \in \hbar\mathbb{Z}^n} \int_{\mathbb{T}^n}e^{2\pi \frac{i}{\hbar}(k-l)\cdot \omega}\varsigma_{\hbar}(k,\omega)g(l) \text{d}\omega,
\end{eqnarray*}
where 
\begin{eqnarray*}
	\varsigma_{\hbar}(k,\omega) & \:=&\sum_{m \in \hbar\mathbb{Z}^n}\int_{\mathbb{T}^n} e^{2\pi \frac{i}{\hbar}(k-m)\cdot(\theta- \omega)}\sigma_{\hbar}(k,\theta) \tau_{\hbar}(m,\omega)\text{d}\theta \\
	&=&\sum_{m \in \hbar\mathbb{Z}^n}\int_{\mathbb{T}^n}e^{2\pi \frac{i}{\hbar}k\cdot(\theta- \omega)}e^{-2\pi \frac{i}{\hbar}m\cdot(\theta- \omega
 )}\sigma_{\hbar}(k,\theta) \tau_{\hbar}(m,\omega) \text{d}\theta \\
	&=& \int_{\mathbb{T}^n}e^{2\pi \frac{i}{\hbar}k\cdot(\theta- \omega)}\sigma_{\hbar}(k,\theta) \widehat{\tau_{\hbar}}(\theta- \omega,\omega)\text{d}\theta  \\
	&=& \int_{\mathbb{T}^n} e^{2\pi \frac{i}{\hbar}k\cdot \theta}\sigma_{\hbar}(k,\omega + \theta) \widehat{\tau_{\hbar}}(\theta,\omega) \text{d}\theta  \qquad (\text{replace} \ \theta-\omega \ \text{by} \ \theta)\,,
\end{eqnarray*}
where $\widehat{\tau_{\hbar}}$ denotes the (semi-classical) Fourier transform of $\tau_{\hbar}(m, \omega)$ in the first variable.

Employing the toroidal Taylor expansion given by \eqref{EQ:Taylor} on the symbol $\sigma_{\hbar}(k,\omega+\theta)$ gives 
  
\begin{eqnarray*}
	\varsigma_{\hbar}(k,y) &=& \int_{\mathbb{T}^n} e^{2\pi \frac{i}{\hbar}k\cdot \omega}\sigma_{\hbar}(k,\theta + \omega) \widehat{\tau_{\hbar}}(\omega,\theta) \text{d}\omega\\
	&=& \int_{\mathbb{T}^n} e^{2\pi \frac{i}{\hbar}k\cdot \omega}  \sum_{|\alpha|< N} \frac{\hbar^{-|\alpha|}}{\alpha !} (e^{2\pi \frac{i}{\hbar} \omega}- 1)^{\alpha} D_{\hbar, \theta}^{(\alpha)} \sigma_{\hbar}(k, \theta) \widehat{\tau_{\hbar}}(\omega,\theta) \text{d}\omega  + R,\\ 
	&=& \sum_{|\alpha|< N} \frac{1}{\alpha !} D_{\hbar,\theta}^{(\alpha)} \sigma_{\hbar}(k, \theta)\Delta_{\hbar,k}^{\alpha}\tau_{\hbar}(k,\theta) + R,
\end{eqnarray*} 
where $R$ is a remainder  from the Taylor expansion and for the last equality we have used the expression \eqref{EQ:diffs}. Now, since the difference operators satisfy the Leibniz rule we get
\[
|D_{\hbar,\theta}^{(\alpha)} \sigma_{\hbar}(k, \theta)\Delta_{\hbar,k}^{\alpha}\tau_{\hbar}(k,\theta)|\leq C_{\alpha} (1+|k|)^{\mu_2+\delta|\alpha|}(1+|k|)^{\mu_1-\rho |\alpha|}\,,
\]
that is we have 
$$D_{\hbar, \theta}^{(\alpha)} \sigma_{\hbar}(k, \theta)\Delta_{\hbar,k}^{\alpha}\tau_{\hbar}(k,\theta)\in S^{\mu_1+\mu_2-(\rho-\delta)|\alpha|}_{\rho,\delta}(\hbar\Z^n\times\T^n).$$
Finally, to estimate the remainder $R$  we follow lines of Theorem \ref{THM:amplitudes}.
\end{proof}

In the following theorem we prove that in the lattice case $\hbar \Z^n$ we still have the desired property for the adjoint of a semi-classical pseudo-differential operator. Before doing so, let us point out that the adjoint operator makes sense in our setting  for operators acting on the Hilbert space $\ell^2(\hbar \Z^n)$.

  \begin{thm}[Adjoint of a $\Psi_{\hbar}DO$]\label{THM:adjoint}
  Let $0\leq \delta<\rho\leq 1.$ Let $\sigma_{\hbar}\in S^{\mu}_{\rho, \delta}(\hbar\mathbb{Z}^n\times\mathbb T^n)$. Then there exist a symbol $\sigma_{\hbar}^*\in S^{\mu}_{\rho, \delta}(\hbar\mathbb{Z}^n\times\mathbb T^n)$ such that the adjoint operator $\rm{Op}(\sigma_{\hbar})^*$ is a pseudo-difference operator with symbol $\sigma_{\hbar}^{*}$;  that is we have  $\rm{Op}(\sigma_{\hbar})^* = \rm{Op}(\sigma_{\hbar}^*)$. Moreover, we have the asymptotic expansion 
  \begin{equation}\label{EQ:adj}
  	\sigma_{\hbar}^*(k,\theta)  \sim \sum_\alpha \frac{1}{\alpha !}\Delta_{\hbar,k}^{\alpha} D_{\hbar, \theta}^{(\alpha)}\overline{\sigma_{\hbar}(k,\theta)}. 
  \end{equation}   
\end{thm}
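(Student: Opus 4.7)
The plan is to reduce the computation of $\mathrm{Op}(\sigma_{\hbar})^{*}$ to an amplitude representation and then apply Theorem \ref{THM:amplitudes} to identify it with a genuine $\Psi_{\hbar}$DO and extract the asymptotic expansion.

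First, I would unfold the defining identity $\langle \mathrm{Op}(\sigma_{\hbar})f, g\rangle_{\ell^{2}(\hbar\Z^{n})} = \langle f, \mathrm{Op}(\sigma_{\hbar})^{*} g\rangle_{\ell^{2}(\hbar\Z^{n})}$ for $f, g \in \mathcal{S}(\hbar\Z^{n})$. Substituting the amplitude-type representation \eqref{3} of $\mathrm{Op}(\sigma_{\hbar})$ and interchanging the (absolutely convergent, on Schwartz functions) sums and torus integrals, one obtains
\begin{equation*}
\mathrm{Op}(\sigma_{\hbar})^{*} g(k) = \sum_{l\in\hbar\Z^{n}}\int_{\T^{n}} e^{2\pi\frac{i}{\hbar}(k-l)\cdot\theta}\,\overline{\sigma_{\hbar}(l,\theta)}\, g(l)\,\mathrm{d}\theta\,,
\end{equation*}
after renaming the dummy variables. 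This is precisely an amplitude operator in the sense of \eqref{AMP} with amplitude
\begin{equation*}
a_{\hbar}(k,l,\theta) := \overline{\sigma_{\hbar}(l,\theta)}\,.
\end{equation*}

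Next, I would check that $a_{\hbar} \in \mathcal{A}^{0,\mu}_{\rho,\delta}(\hbar\Z^{n}\times\hbar\Z^{n}\times\T^{n})$. Since $a_{\hbar}$ is independent of $k$, the differences $\Delta_{\hbar,k}^{\alpha} a_{\hbar}$ vanish for $|\alpha|\geq 1$, while for $\alpha = 0$ the estimate
\begin{equation*}
|D_{\hbar,\theta}^{(\gamma)} \Delta_{\hbar,l}^{\beta}\,\overline{\sigma_{\hbar}(l,\theta)}| \leq C_{\beta,\gamma}(1+|l|)^{\mu-\rho|\beta|+\delta|\gamma|}
\end{equation*}
is immediate from $\sigma_{\hbar} \in S^{\mu}_{\rho,\delta}(\hbar\Z^{n}\times\T^{n})$ (complex conjugation commutes with differences and derivatives). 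Hence $a_{\hbar}$ satisfies \eqref{EQ:amps} with $\mu_{1}=0$, $\mu_{2}=\mu$, taking $Q=0$.

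Finally, Theorem \ref{THM:amplitudes} yields a symbol $\sigma_{\hbar}^{*} \in S^{\mu}_{\rho,\delta}(\hbar\Z^{n}\times\T^{n})$ with $\mathrm{Op}(\sigma_{\hbar})^{*} = \mathrm{Op}(\sigma_{\hbar}^{*})$ and the asymptotic expansion
\begin{equation*}
\sigma_{\hbar}^{*}(k,\theta) \sim \sum_{\alpha} \frac{1}{\alpha !}\Delta_{\hbar,l}^{\alpha} D_{\hbar,\theta}^{(\alpha)}\,\overline{\sigma_{\hbar}(l,\theta)}\Big|_{l=k} = \sum_{\alpha}\frac{1}{\alpha !}\Delta_{\hbar,k}^{\alpha} D_{\hbar,\theta}^{(\alpha)}\,\overline{\sigma_{\hbar}(k,\theta)}\,,
\end{equation*}
which is exactly \eqref{EQ:adj}. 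The only mildly delicate point is the justification of the sum–integral interchange in the first step and the verification that Theorem \ref{THM:amplitudes} applies under the hypothesis $0\leq\delta<\rho\leq 1$; both are routine, so the proof is essentially a one-line reduction to the amplitude calculus.
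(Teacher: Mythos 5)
Your proof is correct and follows essentially the same route as the paper: compute $\mathrm{Op}(\sigma_{\hbar})^{*}$ from the $\ell^{2}$ duality, recognise it as an amplitude operator with amplitude $a_{\hbar}(k,l,\theta)=\overline{\sigma_{\hbar}(l,\theta)}\in\mathcal{A}^{0,\mu}_{\rho,\delta}$, and invoke Theorem \ref{THM:amplitudes} for the symbol and the expansion \eqref{EQ:adj}. Your explicit verification of the amplitude-class estimates is a slightly fuller version of the paper's one-line identification, but the argument is the same.
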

\begin{proof}

Let $f,g \in \ell^2(\hbar \Z^n)$. We have 
\begin{eqnarray*}
	\big( \text{Op}_{\hbar}(\sigma_{\hbar})f,g \big)_{\ell ^2(\hbar\mathbb{Z}^n)} &=&  \sum_{k \in \hbar\mathbb{Z}^n} \text{Op}(\sigma_{\hbar})f(k) \overline{g(k)}\\
	&=& \sum_{k \in \hbar\mathbb{Z}^n}\sum_{l \in \hbar\mathbb{Z}^n}\int_{ \mathbb{T}^n} e^{2\pi \frac{i}{\hbar} (k-l)\cdot \theta}\sigma_{\hbar}(k,\theta)  f(l) \overline{g(k)} \text{d}\theta \\
	&=& \sum_{l \in \hbar\mathbb{Z}^n} f(l)\Bigg(\overline{ \sum_{k \in \hbar\mathbb{Z}^n} \int_{ \mathbb{T}^n}e^{-2\pi \frac{i}{\hbar} (k-l)\cdot \theta} \overline{\sigma_{\hbar}(k,\theta)} g(k)} \text{d}\theta\Bigg)\\
	&=& \sum_{l \in \hbar\mathbb{Z}^n} f(l)\overline{\text{Op}(\sigma_{\hbar})^* g(l) }\,.
\end{eqnarray*}
By the definition of the adjoint operator we must have 
\[ \text{Op}_{\hbar}(\sigma_{\hbar})^* g(l) = \sum_{k \in \hbar\mathbb{Z}^n} \int_{ \mathbb{T}^n}e^{-2\pi \frac{i}{\hbar} (k-l)\cdot \theta} \overline{\sigma_{\hbar}(k,\theta)} g(k) \text{d}\theta\,,\]
or interchanging $k$ with $l$
\[ \text{Op}_{\hbar}(\sigma_{\hbar})^* g(k) = \sum_{l \in \hbar\mathbb{Z}^n} \int_{ \mathbb{T}^n}e^{2\pi  \frac{i}{\hbar} (k-l)\cdot \theta} \overline{\sigma_{\hbar}(l,\theta)} g(l) \text{d}\theta\,,\]
i.e. $\text{Op}_{\hbar}(\sigma_{\hbar})^*$  is an amplitude operator with amplitude \[a_{\hbar}(k,l,\theta) = \overline{\sigma_{\hbar}(l,\theta)}\in S^{\mu}_{\rho,\delta}(\hbar \Z^n,\T^n)=\mathcal{A}^{0,\mu}_{\rho,\delta}(\hbar\Z^n\times \hbar\Z^n\times\T^n)\,.\] Hence $\text{Op}_{\hbar}(\sigma_{\hbar})^* = \text {Op}_{\hbar}(\sigma_{\hbar}^*)$, and
  Theorem \ref{THM:amplitudes} yields the asymptotic expansion
\[\sigma_{\hbar}^*(k,\theta)  \sim \sum_{\alpha} \frac{1}{\alpha !}\Delta_{\hbar,l}^{\alpha} D_{\hbar,\theta}^{(\alpha)}\overline{\sigma_{\hbar}(l,\theta)}\Big|_{l=k}\,.\]
The proof of Theorem \ref{THM:adjoint} is now complete.
\end{proof}
Before turning over to analyse the transpose (or algebraic adjoint) operator in our setting let us first recall how the transpose operator reads in our setting:

\medskip 
  	For $f, g\in \mathcal{S}(\hbar\mathbb{Z}^n)$, the transpose $T^t$ of a linear operator $T$ satisfies the distributional duality
  	\[\langle T^t f,g\rangle =\langle f,T g\rangle\,;\] 
  	   that is for $k\in \hbar\mathbb{Z}^n$ we have the equality
  	\[\sum_{k\in \hbar\mathbb{Z}^n}(T^t f)(k)g(k) = \sum_{k\in \hbar\mathbb{Z}^n}f(k)(Tg )(k). \]

  \begin{thm}[Transpose of a $\Psi_{\hbar}DO$]\label{THM:transpose}
  	Let $0\leq \delta<\rho\leq 1$ and let $\sigma_{\hbar}\in S^{\mu}_{\rho, \delta}(\hbar\mathbb{Z}^n\times\mathbb T^n)$. Then there exists a symbol $\sigma_{\hbar}^t\in S^{\mu}_{\rho, \delta}(\hbar\mathbb{Z}^n\times\mathbb T^n)$ so that the transpose operator $\rm{Op}(\sigma_{\hbar})^t$ is a semi-classical pseudo-differential operator with symbol $\sigma_{\hbar}^t$; i.e., we have $\rm{Op}(\sigma_{\hbar})^t = \rm{Op}(\sigma_{\hbar}^t)$. The asymptotic formula for the symbol $\sigma_{\hbar}^{t}$ is given by
  	\begin{equation}\label{EQ:transpose}
  		{\sigma_{\hbar}^t}(k, \theta) \sim \sum_{\alpha}\frac{1}{\alpha!}\Delta_{\hbar,k}^{\alpha}D_{\hbar, \theta}^{(\alpha)}\sigma_{\hbar}(k, -\theta)\,.
  	\end{equation}
  \end{thm}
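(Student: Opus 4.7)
My plan is to mimic the strategy used for the adjoint in Theorem~\ref{THM:adjoint}: exploit the distributional duality defining $T^t$ to rewrite $T^t f$ as an amplitude operator, and then appeal to Theorem~\ref{THM:amplitudes} to conclude both that $T^t = \Op(\sigma_\hbar^t)$ for some $\sigma_\hbar^t \in S^\mu_{\rho,\delta}(\hbar\Z^n \times \T^n)$ and that the asymptotic expansion \eqref{EQ:transpose} holds.

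Concretely, for $f,g \in \mathcal{S}(\hbar\Z^n)$, starting from
\[
 \sum_{k \in \hbar\Z^n} (T^t f)(k)\, g(k) = \sum_{k \in \hbar\Z^n} f(k)\, (Tg)(k),
\]
I would insert the explicit definition of $\Op_{\hbar}(\sigma_\hbar)g(k)$ from \eqref{neil} on the right-hand side, formally swap the two (discrete) summations, and relabel indices to isolate the bracket acting on $f$. This produces the expression
\[
 (T^t f)(k) = \sum_{m \in \hbar\Z^n} \int_{\T^n} e^{-2\pi \tfrac{i}{\hbar}(k-m)\cdot \theta} \sigma_\hbar(m,\theta)\, f(m)\,\dd \theta.
\]
After the change of variable $\theta \mapsto -\theta$ in the torus integral, this reads
\[
 (T^t f)(k) = \sum_{m \in \hbar\Z^n} \int_{\T^n} e^{2\pi \tfrac{i}{\hbar}(k-m)\cdot \theta}\, a_{\hbar}(k,m,\theta)\, f(m)\,\dd \theta,
\]
with amplitude $a_\hbar(k,m,\theta) := \sigma_\hbar(m,-\theta)$. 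Since $a_\hbar$ is independent of $k$ and the map $\theta \mapsto -\theta$ preserves $C^\infty(\T^n)$-estimates on the torus, the hypothesis $\sigma_\hbar \in S^{\mu}_{\rho,\delta}(\hbar\Z^n \times \T^n)$ immediately gives $a_\hbar \in \mathcal{A}^{0,\mu}_{\rho,\delta}(\hbar\Z^n \times \hbar\Z^n \times \T^n)$.

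Applying Theorem~\ref{THM:amplitudes} to this amplitude, we deduce $T^t = \Op_{\hbar}(\sigma_\hbar^t)$ with $\sigma_\hbar^t \in S^{\mu}_{\rho,\delta}(\hbar\Z^n \times \T^n)$ and
\[
 \sigma_\hbar^t(k,\theta) \sim \sum_{\alpha} \frac{1}{\alpha!}\, \Delta^{\alpha}_{\hbar,m} D^{(\alpha)}_{\hbar,\theta}\, \sigma_\hbar(m,-\theta)\Big|_{m=k} = \sum_{\alpha} \frac{1}{\alpha!}\, \Delta^{\alpha}_{\hbar,k} D^{(\alpha)}_{\hbar,\theta}\, \sigma_\hbar(k,-\theta),
\]
which is exactly \eqref{EQ:transpose}.

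The main technical subtlety, as in Theorem~\ref{THM:adjoint}, lies in justifying the interchange of sums and of sum with integral in the opening duality computation; this is handled by first working with $f,g \in \mathcal{S}(\hbar\Z^n)$, so that the rapid decay together with the integrability of $\sigma_\hbar(k,\cdot)$ on the compact torus makes all series and integrals absolutely convergent. A minor point worth checking carefully is that the substitution $\theta \mapsto -\theta$ interacts correctly with the operators $D^{(\alpha)}_{\hbar,\theta}$ in \eqref{def.PD}: since $D^{(\alpha)}_{\hbar,\theta}$ is a polynomial in the partial derivatives $\partial/\partial\theta_j$ with constant coefficients, applying it to the function $\theta \mapsto \sigma_\hbar(k,-\theta)$ is a well-defined operation and the stated expansion retains the correct symbolic class, completing the argument.
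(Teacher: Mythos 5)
Your proposal is correct and follows essentially the same route as the paper's own proof: the duality computation identifies $T^t$ as an amplitude operator with amplitude $a_\hbar(k,m,\theta)=\sigma_\hbar(m,-\theta)\in\mathcal{A}^{0,\mu}_{\rho,\delta}(\hbar\Z^n\times\hbar\Z^n\times\T^n)$, and Theorem \ref{THM:amplitudes} then yields both the membership $\sigma_\hbar^t\in S^\mu_{\rho,\delta}(\hbar\Z^n\times\T^n)$ and the expansion \eqref{EQ:transpose}. Your added remarks on absolute convergence for Schwartz functions and on the interaction of $\theta\mapsto-\theta$ with $D^{(\alpha)}_{\hbar,\theta}$ are sensible refinements of the same argument.
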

  
  \begin{proof}
  We have
  	\begin{eqnarray*}
  		\sum_{k \in \hbar\mathbb{Z}^n}f(k)(Tg )(k)&= & \sum_{k \in \hbar\mathbb{Z}^n} \sum_{l\in \hbar\mathbb{Z}^n}\int_{\mathbb{T}^n} f(k)e^{2\pi \frac{i}{\hbar}(k-l)\cdot \theta}\sigma_{\hbar}(k, \theta)g(l)\text{d}\theta\\
  		&= & \sum_{l \in \hbar\mathbb{Z}^n}g(l)\Bigg(\sum_{k\in \hbar\mathbb{Z}^n}\int_{\mathbb{T}^n}e^{2\pi \frac{i}{\hbar}(k-l)\cdot \theta}\sigma_{\hbar}(k, \theta)f(k)\text{d}\theta\Bigg).
  	\end{eqnarray*}
  	By the definiton of the transpose operator we must have
  	\[ T^t g(l) =\sum_{k\in \hbar\mathbb{Z}^n}\int_{\mathbb{T}^n}e^{ 2\pi \frac{i}{\hbar}(k-l)\cdot \theta}\sigma_{\hbar}(k, \theta)g(k)\text{d}\theta\,, \]
  	or equivalently
  	\[ T^t g(l) =\sum_{k\in \hbar\mathbb{Z}^n}\int_{\mathbb{T}^n}e^{2\pi \frac{i}{\hbar}(l-k)\cdot \theta}\sigma_{\hbar}(k, -\theta)g(k)\text{d}\theta\,. \] 
   The last formula corresponds to an amplitude operator with 
  	amplitude $a_{\hbar}(l, k, \theta) =\sigma_{\hbar}(k, -\theta) \in \mathcal{A}^{0,\mu}_{\rho,\delta} (\hbar\mathbb{Z}^n \times \times \hbar\Z^n \times \mathbb{T}^n) $.
  	Hence $\displaystyle T^t = \text{Op}(a_{\hbar}^t)$, and
  	 Theorem \ref{THM:amplitudes} gives 
  	\begin{equation*}
  		{\sigma_{\hbar}^t}(k, \theta) \sim \sum_{\alpha}\frac{1}{\alpha!}\Delta_{\hbar,k}^{\alpha}D_{\hbar,\theta}^{(\alpha)}\sigma_{\hbar}(k, -\theta)\,.
  	\end{equation*}
  	The proof of Theorem \ref{THM:transpose} is now complete.
  \end{proof}

The following result on the asymptotic sums of symbols is well-known in the classical cases as well. This is a utility tool that can simplify the process of solving the so-called ``eliptic'' partial differential equations.
   \begin{lemma}[Asymptotic sums of symbols of $\Psi_{\hbar}DO$s]\label{LEM:as.sums}
   Let $1\geq \rho>\delta \geq 0$, and let $\big\{\mu_j\big\}_{j=0}^{\infty} \subset \mathbb{R}$ be a decreasing sequence such that $\mu_j\rightarrow -\infty$ as $j\rightarrow \infty$.
   If $\sigma_{\hbar,j} \in S_{\rho, \delta}^{\mu_j}(\hbar \mathbb{Z}^n\times\mathbb{T}^n)$  for all $j\in \mathbb{N}_0$, then there exists  $\sigma_\hbar \in S_{\rho, \delta}^{\mu_0}(\hbar \mathbb{Z}^n\times\mathbb{T}^n)$ such that 
   \[\sigma_{\hbar} \sim \sum_{j=0}^{\infty}\sigma_{\hbar,j},\]
   that is for all $N \in \mathbb{N}$ we have 
   \[\sigma_{\hbar} - \sum_{j=0}^{N-1}\sigma_{\hbar,j} \in S_{\rho, \delta}^{\mu_N}(\hbar \mathbb{Z}^n\times\mathbb{T}^n)\,. \]
   \end{lemma}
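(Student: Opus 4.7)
The plan is to use the standard Borel-type construction adapted to the lattice setting, where cutoffs are taken in the lattice variable $k \in \hbar\Z^n$ (since the symbol growth/decay is measured there), in analogy with the SG-type calculus in $\R^n$ and with the classical toroidal construction from \cite[Section 4.3]{ruzhansky2009pseudo}. Fix a function $\chi \in C_c^\infty(\R^n)$ with $\chi(k)=1$ for $|k|\le 1/2$ and $\chi(k)=0$ for $|k|\ge 1$, and set $\chi_{j,R}(k):=1-\chi(k/R)$. For a sequence $R_j\nearrow\infty$ (to be chosen) define
\begin{equation*}
\sigma_\hbar(k,\theta) := \sum_{j=0}^\infty \chi_{j,R_j}(k)\,\sigma_{\hbar,j}(k,\theta),\qquad k\in\hbar\Z^n,\ \theta\in\T^n.
\end{equation*}
For any fixed $k$, only finitely many of the factors $\chi_{j,R_j}(k)$ are nonzero (namely those with $R_j\le 2|k|$), so the sum is actually finite pointwise and thus defines a function that is smooth in $\theta$.

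Next I would estimate each summand in the seminorms of $S^{\mu_0}_{\rho,\delta}(\hbar\Z^n\times\T^n)$. The discrete Leibniz rule for $\Delta_{\hbar,k}^\alpha$ produces terms of the form $(\Delta_{\hbar,k}^{\alpha'}\chi_{j,R_j})(k)\,(\Delta_{\hbar,k}^{\alpha''}\sigma_{\hbar,j})(k+\hbar\gamma,\theta)$ with $\alpha'+\alpha''=\alpha$. Since $\chi_{j,R_j}$ is the restriction of a smooth function at scale $R_j$, the mean value theorem gives $|\Delta_{\hbar,k}^{\alpha'}\chi_{j,R_j}(k)|\le C_{\alpha'} R_j^{-|\alpha'|}$ (uniformly in $\hbar\le 1$), and this factor is supported in the region $|k|\ge R_j/2$. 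Combined with the hypothesis $\sigma_{\hbar,j}\in S^{\mu_j}_{\rho,\delta}$, on this support one obtains
\begin{equation*}
\bigl|D_{\hbar,\theta}^{(\beta)}\Delta_{\hbar,k}^\alpha\bigl(\chi_{j,R_j}(k)\sigma_{\hbar,j}(k,\theta)\bigr)\bigr|
\le C_{\alpha,\beta,j}\,R_j^{\mu_j-\mu_0}\,(1+|k|)^{\mu_0-\rho|\alpha|+\delta|\beta|},
\end{equation*}
where we have used $(1+|k|)^{\mu_j-\mu_0}\le (R_j/2)^{\mu_j-\mu_0}$ since $\mu_j\le\mu_0$. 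Because $R_j\to\infty$ and $\mu_j-\mu_0<0$ eventually, one can choose the $R_j$'s inductively (diagonally, enumerating pairs $(\alpha,\beta)$) so that each summand is controlled in the $(\alpha,\beta)$-seminorm of $S^{\mu_0}_{\rho,\delta}$ by $2^{-j}$ whenever $|\alpha|+|\beta|\le j$. Summation in $j$ then yields $\sigma_\hbar\in S^{\mu_0}_{\rho,\delta}(\hbar\Z^n\times\T^n)$.

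For the asymptotic property, write
\begin{equation*}
\sigma_\hbar-\sum_{j=0}^{N-1}\sigma_{\hbar,j}=\sum_{j=0}^{N-1}\bigl(\chi_{j,R_j}(k)-1\bigr)\sigma_{\hbar,j}(k,\theta)+\sum_{j=N}^\infty \chi_{j,R_j}(k)\sigma_{\hbar,j}(k,\theta).
\end{equation*}
Each term in the first (finite) sum is compactly supported in $k$, hence lies in $S^{-\infty}\subset S^{\mu_N}_{\rho,\delta}$. For the second (infinite) sum, the same estimate as above, with $\mu_0$ replaced by $\mu_N$ and used for $j\ge N$, shows that every seminorm of every summand is summable (thanks to the diagonal choice of $R_j$), producing an element of $S^{\mu_N}_{\rho,\delta}$.

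The main technical nuisance will be the bookkeeping with the discrete Leibniz rule: when $\Delta^\alpha_{\hbar,k}$ is distributed over a product, the second factor is evaluated at a shifted lattice point $k+\hbar\gamma$ with $|\gamma|\le|\alpha|$, so one must verify that the support condition $|k|\ge R_j/2$ still holds (up to enlarging the constant, which is fine since $R_j\to\infty$) and that the standard $(1+|k+\hbar\gamma|)\sim(1+|k|)$ estimate applies; both are routine but must be invoked carefully to keep the estimate uniform in $\hbar\in(0,1]$. Apart from this, the argument is entirely parallel to the classical construction.
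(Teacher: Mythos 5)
Your construction is correct in outline, but it takes a genuinely different route from the paper: the paper disposes of this lemma in one line, by citing the toroidal asymptotic-sum result \cite[Theorem 4.4.1]{ruzhansky2009pseudo} and remarking that the classes $S^{\mu}_{\rho,\delta}(\hbar\Z^n\times\T^n)$ coincide with the toroidal ones modulo swapping the roles of the lattice and toroidal variables. You instead carry out the full Borel-type excision construction directly on $\hbar\Z^n$, with cutoffs $\chi_{j,R_j}$ in the lattice variable $k$ (correctly, since the order is measured in $(1+|k|)$), and with the divided differences $\Delta^{\alpha}_{\hbar,k}$ of the cutoff bounded via the mean value theorem uniformly in $\hbar\in(0,1]$. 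What your approach buys is a self-contained proof with explicit uniformity in $\hbar$; what the paper's approach buys is brevity and the reuse of an already established toroidal statement.

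Two points in your sketch need tightening, both standard. First, in the Leibniz terms with $\alpha'\neq 0$ you justify the key estimate only from the lower support bound $|k|\ge R_j/2$; that is not enough, because if $\mu_j-\mu_0+\rho|\alpha'|>0$ the factor $(1+|k|)^{\mu_j-\mu_0+\rho|\alpha'|}$ is unbounded on that region and cannot be absorbed into $R_j^{\mu_j-\mu_0}$. You must use that $\Delta^{\alpha'}_{\hbar,k}\chi_{j,R_j}$ (for $\alpha'\neq0$) is supported in an annulus $R_j/2-c\le|k|\le R_j+c$, where $(1+|k|)\sim R_j$, so that $R_j^{-|\alpha'|}\lesssim (1+|k|)^{-\rho|\alpha'|}$; then your displayed bound holds. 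Second, the diagonal choice of $R_j$ as you state it only enforces $C_{\alpha,\beta,j}R_j^{\mu_j-\mu_0}\le 2^{-j}$, and this does not imply summability of the tail seminorms relative to the weight $\mu_N$: one only gets $C_{\alpha,\beta,j}R_j^{\mu_j-\mu_N}\le 2^{-j}R_j^{\mu_0-\mu_N}$, which need not be summable since $R_j\to\infty$. The standard repair is to impose, at stage $j$, the finitely many constraints $C_{\alpha,\beta,j}R_j^{\mu_j-\mu_N}\le 2^{-j}$ for all $N\le j$ with $\mu_j<\mu_N$ and all $|\alpha|+|\beta|\le j$ (each achievable by enlarging $R_j$); the finitely many indices $j\ge N$ with $\mu_j=\mu_N$ contribute terms that lie in $S^{\mu_N}_{\rho,\delta}$ individually, so they do not affect the tail argument. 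With these two adjustments your proof is complete.
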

   \begin{proof}
    The proof is a direct consequence of   \cite[Theorem 4.4.1]{ruzhansky2009pseudo} taking into account that the symbol classes there are the same modulo swapping the order of variables.  
   \end{proof}

Let now demonstrate  the notion of ellipticity of  semi-classical pseudo-differential  operators with symbol in the symbol classes $S^{\mu}_{\rho, \delta}(\hbar \mathbb{Z}^n\times\mathbb T^n)$. The following definition is an adaptation of the notion of ellipticity in the classical settings.
   
        \begin{defn}[Elliptic operators]
      A symbol  $\sigma_\hbar \in S^{\mu}_{\rho, \delta}(\hbar \mathbb{Z}^n\times\mathbb T^n)$ shall be called  \textit{elliptic} (of order $\mu$) if there exist $C >0$ and $M>0$ such that 
      \[|\sigma_\hbar
      (k,\theta)|\geq C (1+|k|)^{\mu}\]
      for all  $\theta \in \mathbb{T}^n$ and for  $|k|\geq M$, $k\in \hbar\mathbb{Z}^n$. The semi-classical pseudo-differential operators with elliptic symbols  shall  also be called elliptic.
      \end{defn}

   In the next result we show that the ellipticity of the pseudo-differential operator on $\hbar \Z^n$ is equivalent, as it happens also in the classical cases, to the invertibility of it in the algebra of operators $\Op(S^{\infty}(\hbar \Z^n \times \T^n))/ \Op(S^{-\infty}(\hbar \Z^n \times \T^n))$.\footnote{As usually, we define $S^{\infty}(\hbar \Z^n \times \T^n)=\bigcup_{\mu \in \R}S^{\mu}_{1,0}(\hbar \Z^n \times \T^n)$.}
   
   \smallskip The notion of a  \textit{parametrix} in our setting follows the lines of the general theory and reads as:
   
   \begin{defn}[parametrix]
   The operator $T$ is called the right (resp. left) \textit{parametrix} of $S$ if $ST-I \in \Op(S^{-\infty}(\hbar \mathbb{Z}^n\times\mathbb{T}^n))$ (resp. $TS-I \in \Op(S^{-\infty}(\hbar \mathbb{Z}^n\times\mathbb{T}^n)) $), where $I$ is the identity operator \footnote{The notion of a parametrix is applicable to all pseudo-differential operators on $\hbar \Z^n$}. 
   \end{defn}
   \begin{thm}[The ellipticity of a $\Psi_\hbar$DO is equivalent to the existence of its parametrix]\label{THM:elliptic}
   Let $0\leq \delta<\rho \leq 1.$
   An operator $U\in \Op(S^{\mu}_{\rho, \delta}(\hbar \Z^n\times\mathbb{T}^n))$ is elliptic if and only if there exists $V\in \Op(S^{-\mu}_{\rho, \delta}(\hbar \Z^n\times\mathbb{T}^n))$ such that 
   \[VU\backsim I\backsim UV\;\textrm{ modulo }\; \Op(S^{-\infty}(\hbar \Z^n\times \mathbb T^n))\,,\]
   i.e., the operator $V$ is the left and right parametrix of $U$.
   \smallskip
   
   Moreover, let $\displaystyle U\sim \sum_{l=0}^{\infty}U_l$ be an expansion of the operator $U$, where \[U_l\in \Op(S_{\rho,\delta}^{\mu-(\rho-\delta)l}(\hbar \Z^n\times \mathbb{T}^n))\,.\] Then the corresponding asymptotic expansion  of the operator $V$ can be expressed via $\displaystyle V\sim \sum_{j=0}^{\infty}V_j$ with \[V_j\in \Op(S_{\rho,\delta}^{-\mu-(\rho-\delta)j}(\hbar \Z^n\times \mathbb{T}^n))\]
    can be obtained by setting $\displaystyle \sigma_{\hbar,V_0} := \frac{1}{\sigma_{\hbar,U_0}}$, and then recursively    
   \begin{equation}\label{EQ:parexp}
 \sigma_{\hbar,V_N}(k, \theta) = \frac{-1}{\sigma_{\hbar,U_0}(k, \theta)} \sum_{j = 0}^{N-1}\sum_{l = 0}^{N-1}\sum_{|\gamma| = N - j - l}\frac{1}{\gamma!}\Big[D_{\hbar, \theta}^{(\gamma)} \sigma_{\hbar,V_j}(k, \theta)\Big] \Delta_{\hbar,k}^{\gamma}\sigma_{\hbar,U_l}(k, \theta).
\end{equation}
   \end{thm}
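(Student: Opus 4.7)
The plan is to handle the two implications separately, since the ``parametrix $\Rightarrow$ ellipticity'' direction is essentially an immediate consequence of Theorem~\ref{THM:comp}, while the converse requires an explicit recursive construction followed by an application of Lemma~\ref{LEM:as.sums}. For the easy direction, if $V\in\Op(S^{-\mu}_{\rho,\delta})$ satisfies $VU\sim I$ modulo $\Op(S^{-\infty})$, then the composition formula \eqref{EQ:comp} gives $\sigma_{\hbar,V}(k,\theta)\,\sigma_{\hbar,U}(k,\theta)=1+r_{\hbar}(k,\theta)$ with $r_\hbar\in S^{-(\rho-\delta)}_{\rho,\delta}$. For $|k|$ sufficiently large we have $|r_\hbar(k,\theta)|\le 1/2$, and since $|\sigma_{\hbar,V}(k,\theta)|\le C(1+|k|)^{-\mu}$, this immediately yields the lower bound $|\sigma_{\hbar,U}(k,\theta)|\ge (2C)^{-1}(1+|k|)^{\mu}$, i.e.\ ellipticity.

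For the converse, I would build $V$ level by level. First, using the ellipticity bound for $\sigma_{\hbar,U_0}$, pick a smooth cut-off $\chi:\hbar\Z^n\to[0,1]$ with $\chi(k)=0$ for $|k|\le M$ and $\chi(k)=1$ for $|k|\ge 2M$, and set $\sigma_{\hbar,V_0}(k,\theta):=\chi(k)/\sigma_{\hbar,U_0}(k,\theta)$, which is well defined on the whole of $\hbar\Z^n\times\T^n$. A standard Leibniz argument for the difference operators $\Delta^{\alpha}_{\hbar,k}$ and the derivatives $D^{(\beta)}_{\hbar,\theta}$ applied to the identity $\sigma_{\hbar,V_0}\sigma_{\hbar,U_0}=\chi$ shows by induction on $|\alpha|+|\beta|$ that $\sigma_{\hbar,V_0}\in S^{-\mu}_{\rho,\delta}(\hbar\Z^n\times\T^n)$. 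Having $\sigma_{\hbar,V_0},\dots,\sigma_{\hbar,V_{N-1}}$ in the advertised classes, define $\sigma_{\hbar,V_N}$ by \eqref{EQ:parexp}. Each summand $\frac{1}{\gamma!}D^{(\gamma)}_{\hbar,\theta}\sigma_{\hbar,V_j}\,\Delta^{\gamma}_{\hbar,k}\sigma_{\hbar,U_l}$ belongs to $S^{-\mu-(j+l+|\gamma|)(\rho-\delta)}_{\rho,\delta}=S^{-\mu-N(\rho-\delta)}_{\rho,\delta}$, and multiplication by $1/\sigma_{\hbar,U_0}$ (which by the previous argument behaves like an $S^{-\mu}_{\rho,\delta}$ symbol away from $|k|\le M$) does not change this, so $\sigma_{\hbar,V_N}\in S^{-\mu-N(\rho-\delta)}_{\rho,\delta}$. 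Lemma~\ref{LEM:as.sums} then furnishes a symbol $\sigma_{\hbar,V}\in S^{-\mu}_{\rho,\delta}$ with $\sigma_{\hbar,V}\sim\sum_{j=0}^{\infty}\sigma_{\hbar,V_j}$, and we set $V:=\Op(\sigma_{\hbar,V})$.

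To check that $V$ is indeed a right parametrix, I would compute $\sigma_{\hbar,UV}$ using the composition formula \eqref{EQ:comp}, grouping together all contributions of order $-N(\rho-\delta)$; by the very construction \eqref{EQ:parexp} (which is obtained from the requirement that the order $-N(\rho-\delta)$ homogeneous component of $\sigma_{\hbar,VU}$ vanishes for every $N\ge1$), these contributions cancel modulo $S^{-(N+1)(\rho-\delta)}_{\rho,\delta}$. Letting $N\to\infty$ shows $VU-I\in\Op(S^{-\infty})$. A left parametrix $V'\in\Op(S^{-\mu}_{\rho,\delta})$ is produced by the analogous recursion applied to the transpose (or adjoint) using Theorem~\ref{THM:transpose} (respectively Theorem~\ref{THM:adjoint}); and the standard identity $V'=V'(UV)-V'(UV-I)\sim V'UV\sim V$ modulo $\Op(S^{-\infty})$ shows that the same $V$ works on both sides.

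The main obstacle I expect is the verification that $1/\sigma_{\hbar,U_0}$ genuinely lies in $S^{-\mu}_{\rho,\delta}$, i.e.\ that repeated application of the difference operators $\Delta^\alpha_{\hbar,k}$ and the toroidal derivatives $D^{(\beta)}_{\hbar,\theta}$ produces symbols of the expected decay. Unlike the Euclidean setting, $\Delta^\alpha_{\hbar,k}$ is a nonlocal operator, so its action on $1/\sigma_{\hbar,U_0}$ is not described by a classical quotient rule, and one needs a discrete Leibniz formula together with a Peetre-type estimate to control the mixed terms uniformly in $\hbar\in(0,1]$. Once this bookkeeping is in place, the remainder of the argument is a formal manipulation with asymptotic expansions exactly as in the classical Kohn--Nirenberg calculus.
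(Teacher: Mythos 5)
Your proposal is correct, and the ``parametrix $\Rightarrow$ ellipticity'' half is essentially identical to the paper's argument (composition formula, absorb the remainder for $|k|\ge M$, then use the upper bound on $\sigma_{\hbar,V}$). For the converse you take a genuinely different route: you build $V$ directly, setting $\sigma_{\hbar,V_0}=\chi/\sigma_{\hbar,U_0}$ with a cut-off, feeding the recursion \eqref{EQ:parexp} to get $\sigma_{\hbar,V_N}\in S^{-\mu-N(\rho-\delta)}_{\rho,\delta}$, summing via Lemma \ref{LEM:as.sums}, and then observing that the recursion is precisely the condition that the order $-N(\rho-\delta)$ part of the composed symbol vanishes, so the parametrix property and the expansion \eqref{EQ:parexp} come out of the same construction. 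The paper instead argues in Neumann-series style: it sets $\sigma_{\hbar,V_0}=1/\sigma_{\hbar,U}$, obtains $V_0U=I-T$ with $T$ of order $-(\rho-\delta)$, and inverts $I-T$ asymptotically by the functional-analytic argument of the toroidal case (citing the analogues of Theorems 4.9.4, 4.9.6 and 4.9.13 of \cite{ruzhansky2009pseudo}), deriving \eqref{EQ:parexp} afterwards from $1\sim\sigma_{\hbar,VU}$ and Theorem \ref{THM:comp}. Your version is more self-contained and handles the non-elliptic region $|k|\le M$ more cleanly (the cut-off only changes the symbol on a bounded set of $k$, hence modulo $S^{-\infty}$), while the paper's is shorter because it defers the heavy lifting to the toroidal calculus; both hinge on the same key technical point you correctly isolate, namely that the (cut-off) reciprocal of an elliptic symbol lies in $S^{-\mu}_{\rho,\delta}$, which requires a discrete Leibniz/Peetre bookkeeping for $\Delta^{\alpha}_{\hbar,k}$. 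One small slip: with the paper's conventions, the recursion \eqref{EQ:parexp} corresponds to $VU\sim I$, so the $V$ you construct first is the \emph{left} parametrix, not the right one as you label it; this is harmless because you do construct the opposite-sided parametrix and run the sandwich argument $V'\sim V'UV\sim V$, which yields the two-sided statement exactly as needed.
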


   \begin{proof}
   First we want to prove that given that the existence of an operator $V$ as in the statement satisfying 
   \[I - UV = T\in S^{-\infty}(\hbar \Z^n\times\mathbb{T}^n)\,,\]  the ellipticity of the operator $U\in $\ \rm{Op}$(S^{\mu}_{\rho, \delta}(\hbar \Z^n\times\mathbb{T}^n))$ can be deduced. 
   By the composition formula, see  Theorem \ref{THM:comp}, we get 
   \[ 1 - \sigma_{\hbar,U}(k, \theta)\sigma_{\hbar,V}(k, \theta)\in S^{-(\rho-\delta)}_{\rho, \delta}(\hbar \Z^n\times\mathbb{T}^n)\,.\]
   The latter means that there exists a constant $C >0$ such that
   \[ |1 - \sigma_{\hbar,U}(k, \theta)\sigma_{\hbar,V}(k, \theta)| \leq  C (1+|k|)^{-(\rho-\delta)}.\]
  By choosing $M$ so that $\displaystyle C_\hbar (1+|M|)^{-(\rho - \delta)}< \frac{1}{2}$, the last estimate yields
   \begin{equation}
   \label{M}
   |\sigma_{\hbar,U}(k, \theta)\sigma_{\hbar,V}(k, \theta)| \geq \frac{1}{2}, \quad \text{for all} \ \ |k|\geq M\,.\end{equation}
   Thus by the assumption on $\sigma_{\hbar,U}$ we get
   \[|\sigma_{\hbar,U}(k, \theta)| \geq \frac{1}{2|\sigma_{\hbar,V}(k, \theta)|} \geq \frac{1}{2C_{\hbar, V}}(1+|k|)^{\mu},,\]
 and we have proved that the symbol $\sigma_{\hbar,U}$
   is elliptic of order $\mu$.

   Conversely,  let us define
   \[\sigma_{\hbar,V_0}(k, \theta):= \frac{1}{\sigma_{\hbar,U}(k, \theta)}.\]
   By the analogous of \cite[Lemma 4.9.4]{ruzhansky2009pseudo} in the lattice $\hbar \Z^n$ setting, and assuming that $|k|>M$, for $M$ as in \eqref{M}, we get   $\sigma_{\hbar,V_0}\in S_{\rho, \delta}^{-\mu}(\hbar \Z^n\times \mathbb{T}^n)$. Hence by the composition formula
   \[\sigma_{\hbar,V_0 U} = \sigma_{\hbar,V_0}\sigma_{\hbar,U} - \sigma_{\hbar,T} \backsim 1 - \sigma_{\hbar,T},\]
   for some $T\in S_{\rho, \delta}^{-(\rho-\delta)}(\hbar \Z^n\times \mathbb{T}^n)$; that is $V_0 U = I -T$. The rest of the converse implication follows by the composition formula, see Theorem \ref{THM:comp} and a functional analytic argument as appears in the proof of \cite[Theorem 4.9.6]{ruzhansky2009pseudo}. It will then be omitted.
   
   Finally let us sketch the proof of the  formula \eqref{EQ:parexp}.
   We note that $I \backsim VU$ which implies  that $1  \backsim  \sigma_{\hbar,VU}(k, \theta).$ 
   An application of the composition formula as in Theorem \ref{THM:comp} yields
   \begin{eqnarray}
\label{elpt.par}
   1 &\backsim & \sum_{\gamma \geq 0}\frac{1}{\gamma !}\Big[D_{\hbar, \theta}^{(\gamma)}\sigma_{\hbar,V}(k, \theta)\Big]\Delta_{\hbar,k}^{\gamma} \sigma_{\hbar,U}(k, \theta)\nonumber \\
   &\backsim & \sum_{\gamma \geq 0}\frac{1}{\gamma !}\Big[D_{\hbar, \theta}^{(\gamma)} \sum_{j =0}^{\infty}\sigma_{\hbar,V_j} (k, \theta)\Big]\Delta_{\hbar,k}^{\gamma} \sum_{l = 0}^{\infty}\sigma_{\hbar,U_l}(k, \theta).
   \end{eqnarray}
   A combination of the formula \eqref{elpt.par} together with an argument similar to the one in the proof of \cite[Theorem 4.9.13]{ruzhansky2009pseudo} for the formula for the parametrix on $\T^n$, completes the proof. 
   \end{proof}
   \section{Link between toroidal and semi-classical  quantizations}
   \label{SEC:link}

   The toroidal quantization \cite{RT-JFAA,RT-Birk} gives rise to further developments and applications; see e.g. \cite{Paycha16,PZ14,ParZ14,Cardona14} to mention only a few. Therefore, it is important to stress out its link with the  semi-classical lattice  quantization that is exactly the topic of the current section. The idea behind this, is then to establish a way in which results on $\T^n$ can be transferred to $\hbar \Z^n$ and vice versa. Similar investigation has been performed in \cite{BKR20} in case of $\Z^n$, and here we verify that they still remain true after the addition of the semi-classical parameter $\hbar$. The importance of this link will be demonstrated in Section \ref{SEC:L2-HS}. Precisely, it provides us with a characterisation of compact operators on $\ell^2(\hbar \Z^n)$, see Corollary \ref{COR:comp}; the semi-classical version of Gohberg lemma, see Corollary \ref{COR:compG}; and conditions for the semi-classical operators in the Schatten-von Neumann classes, see  Theorem \ref{THM:sch}.
   
  \smallskip

   For $\tau_{\T^n}: \T^n \times \Z^n \rightarrow \mathbb{C}$, and for $v \in C^{\infty}(\T^n)$, recall the \textit{toroidal quantization}
    \begin{equation}\label{ClassicalOPT}
   	\Op_{\Tn}(\tau_{\T^n})u(\theta)=\sum_{k\in\Zn} e^{2\pi  i \theta\cdot k}\tau_{\T^n}(\theta,k) (\Ftn u)(k)\,.
   \end{equation}
   To distinguish between the toroidal and the semi-classical lattice quantization as in \eqref{neil}, we will denote them by $\Op_{\T^n}$ and $\Op_{\hbar \Z^n}$ (or $\Op_{\hbar}$), respectively. In both cases, we will use the notation $\overline{l},\overline{l},\cdots$ for elements of the lattice $\Z^n$, and the notation $\theta, \omega,\cdots$ for the elements of the torus $\T^n$.
    
   \begin{rem}[Relation between the toroidal and semi-classical Fourier transform]
  Recall that the toiroidal Fourier transform is defined by 
  \[
  \mathcal{F}_{\T^n}f(\overline{k})=\widehat{f}(\overline{k}):= \int_{\T^n}e^{-i2\pi \theta \cdot \overline{k}}f(\theta)\,\text{d}\theta\,,
  \]
  where $f \in C^{\infty}(\T^n)$ and  $\overline{k} \in \Z^n$.  In particular the operator $\mathcal{F}_{\T^n}$ is a bijection, with inverse $\mathcal{F}_{\T^n}^{-1}: \mathcal{S}(\Z^n) \rightarrow C^{\infty}(\T^n)$ given by
  \[
  (\mathcal{F}_{\T^n}^{-1}f)(\theta)=\sum_{k \in \Z^n}e^{-2\pi \theta \cdot k}f(\overline{k})\,.
  \]
  Recall now the semi-classical Fourier inversion formula as in \eqref{EQ:Finv}.  Observe that  for $\overline{k} \in \Z^n$:
  \begin{eqnarray}
      \label{RelationFtFz}
      \Ftn f(\overline{k}) & = & \int_{\T^n}e^{-2\pi i \theta \cdot \overline{k}} f(\theta) \text{d}\theta \nonumber\\
      & = & \int_{\T^n}e^{-2\pi \frac{i}{\hbar} \theta \cdot k} f(\theta) \text{d}\theta \quad (\text{where} \, k=\hbar \overline{k} \in \hbar \Z^n) \nonumber \\
      & = & (\mathcal{F}^{-1}_{\hbar \Z^n}f)(-k)\nonumber \\
      & = & (\mathcal{F}^{-1}_{\hbar \Z^n}f)(- \hbar \overline{k})\,.
  \end{eqnarray}
 \end{rem}
 From now on we will be using the notation $\Opzn$ to denote a semi-classical pseudo-differential operator, and the notation $\text{Op}_{\T}$ for the toroidal pseudo-differential operator in order to distinguish between the two.
 \smallskip 
 
 The next result allows us to reduce certain properties of semi-classical pseudo-differential operators to properties of toroidal pseudo-differential operators. 
 \begin{thm}\label{THM:link}
For a function $\sigma_\hbar:\hbar \Zn\times\Tn\to\C$ we define $\tau_\hbar : \Tn \times \Zn \rightarrow \mathbb{C}$ by $\tau_\hbar(\theta,\overline{k}):=\overline{\sigma_\hbar(-\hbar \overline{k},\theta)}$. Then we have the following relation 
\begin{equation}\label{EQ:link1}
\Opzn(\sigma_\hbar)=\mathcal{F}_{\hbar \Zn}^{-1}\circ \Optn(\tau_\hbar)^*\circ \mathcal{F}_{\hbar \Zn},
\end{equation} 
where $\Optn(\tau_\hbar)^*$ is the adjoint of the toroidal pseudo-differential operator $\Optn(\tau_\hbar)$ with symbol depending on the semi-classical parameter $\hbar$.
Moreover, we have
\begin{equation}\label{EQ:link2}
\Optn(\tau_\hbar)=\mathcal{F}_{\hbar \Zn}\circ \Opzn(\sigma_\hbar)^*\circ \mathcal{F}_{\hbar \Zn}^{-1},
\end{equation} 
where $\Opzn(\sigma_\hbar)^*$ is the adjoint of the semi-classical  pseudo-difference operator $\Opzn(\sigma_\hbar).$
\end{thm}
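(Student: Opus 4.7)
The plan is to establish \eqref{EQ:link2} by a direct computation and then derive \eqref{EQ:link1} from it by taking $\ell^2$-adjoints, exploiting the unitarity of $\mathcal{F}_{\hbar\Z^n}:\ell^2(\hbar\Z^n)\to L^2(\T^n)$ provided by the Plancherel identity \eqref{perrin}. The main bookkeeping device is the identification $k=\hbar\overline{k}$ with $\overline{k}\in\Z^n$, which in the paper's conventions yields $e^{2\pi i \overline{k}\cdot\theta}=e^{2\pi\frac{i}{\hbar}k\cdot\theta}$ and transports sums over $\Z^n$ into sums over $\hbar\Z^n$, while \eqref{RelationFtFz} converts $\Ftn$ into $\mathcal{F}_{\hbar\Z^n}^{-1}$ evaluated at the reflected argument.

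For \eqref{EQ:link2}, I would pick a test function $u\in C^\infty(\T^n)$ and compute $\mathcal{F}_{\hbar\Z^n}\bigl[\Opzn(\sigma_\hbar)^*(\mathcal{F}_{\hbar\Z^n}^{-1}u)\bigr](\theta')$ step by step. Using the explicit adjoint formula produced inside the proof of Theorem \ref{THM:adjoint},
\[
\Opzn(\sigma_\hbar)^* g(k)=\sum_{l\in\hbar\Z^n}\int_{\T^n} e^{2\pi\frac{i}{\hbar}(k-l)\cdot\theta}\overline{\sigma_\hbar(l,\theta)}g(l)\dd\theta,
\]
applied to $g=\mathcal{F}_{\hbar\Z^n}^{-1}u$ and then followed by the outer $\mathcal{F}_{\hbar\Z^n}$, the sum over $k\in\hbar\Z^n$ factors out as $\sum_{k\in\hbar\Z^n}e^{2\pi\frac{i}{\hbar}k\cdot(\theta-\theta')}$; interpreted distributionally on $\T^n$, this is the periodic Dirac mass at $\theta'$, which collapses the $\theta$-integral and leaves $\sum_{l\in\hbar\Z^n} e^{-2\pi\frac{i}{\hbar} l\cdot\theta'}\overline{\sigma_\hbar(l,\theta')}(\mathcal{F}_{\hbar\Z^n}^{-1}u)(l)$. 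On the other hand, rewriting the toroidal definition \eqref{ClassicalOPT} of $\Optn(\tau_\hbar)u(\theta')$ via $k=\hbar\overline{k}$, substituting $\tau_\hbar(\theta',\overline{k})=\overline{\sigma_\hbar(-k,\theta')}$, replacing $\Ftn u(\overline{k})$ by $(\mathcal{F}_{\hbar\Z^n}^{-1}u)(-k)$ through \eqref{RelationFtFz}, and performing the substitution $l=-k$, produces precisely the same expression. This proves \eqref{EQ:link2}.

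Identity \eqref{EQ:link1} then follows by passing to $\ell^2$-adjoints in \eqref{EQ:link2}: Plancherel gives $\mathcal{F}_{\hbar\Z^n}^*=\mathcal{F}_{\hbar\Z^n}^{-1}$ and $(\mathcal{F}_{\hbar\Z^n}^{-1})^*=\mathcal{F}_{\hbar\Z^n}$, so adjointing both sides of \eqref{EQ:link2} yields $\Optn(\tau_\hbar)^*=\mathcal{F}_{\hbar\Z^n}\circ\Opzn(\sigma_\hbar)\circ\mathcal{F}_{\hbar\Z^n}^{-1}$, and composing with $\mathcal{F}_{\hbar\Z^n}^{-1}$ on the left and with $\mathcal{F}_{\hbar\Z^n}$ on the right recovers \eqref{EQ:link1}. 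I expect the main obstacle to be entirely of a bookkeeping character rather than a conceptual one: one must consistently handle the convention $k\cdot\theta=\hbar\sum k_j\theta_j$, the sign flip in $l=-k$, and the formal manipulation of the periodic delta, all of which can be justified rigorously by first running the argument for $u$ in a dense subspace such as the trigonometric polynomials (where the sum over $k\in\hbar\Z^n$ becomes effectively finite after taking the Fourier transform) and then extending by continuity on $L^2(\T^n)$.
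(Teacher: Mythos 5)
Your proposal is correct, but it is organized differently from the paper's argument. The paper proves \eqref{EQ:link1} first: it factors $\Opzn(\sigma_\hbar)=T\circ\mathcal{F}_{\hbar\Z^n}$ with $T\varphi(k)=\int_{\T^n}e^{2\pi\frac{i}{\hbar}k\cdot\theta}\sigma_\hbar(k,\theta)\varphi(\theta)\,\mathrm{d}\theta$, computes $T^*$ directly from the pairing $(T\varphi,\eta)_{\ell^2(\hbar\Z^n)}=(\varphi,T^*\eta)_{L^2(\T^n)}$, recognizes $T^*=\Optn(\tau_\hbar)\circ\mathcal{F}_{\hbar\Z^n}$ via the substitution $k=\hbar\overline{k}$ and the relation $\Ftn v(-\overline{k})=\mathcal{F}_{\hbar\Z^n}^{-1}v(k)$, and then inverts using unitarity; the second identity \eqref{EQ:link2} is only asserted to follow ``by similar arguments.'' You instead establish \eqref{EQ:link2} first by conjugating the explicit amplitude formula for $\Opzn(\sigma_\hbar)^*$ (taken from the proof of Theorem \ref{THM:adjoint}) with the Fourier transform, collapsing the $\theta$-integral with the periodic Dirac comb $\sum_{k\in\hbar\Z^n}e^{2\pi\frac{i}{\hbar}k\cdot(\theta-\theta')}$, and matching the result against \eqref{ClassicalOPT} after the sign flip $l=-k$; then \eqref{EQ:link1} follows by taking $\ell^2$--$L^2$ adjoints and using unitarity of $\mathcal{F}_{\hbar\Z^n}$. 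What your route buys is that both identities come out of a single computation, making explicit the deduction the paper leaves implicit; what the paper's route buys is that it avoids both the delta-comb manipulation and any appeal to the adjoint theorem, working only with the duality pairing for the auxiliary operator $T$. One caveat on your closing remark: ``extending by continuity on $L^2(\T^n)$'' is literally available only when the operators are bounded (e.g.\ symbols of order zero, or via Theorem \ref{THM:L2}); for general symbols the identities, in your argument as in the paper's, should be read as holding on suitable dense domains (trigonometric polynomials, respectively $\mathcal{S}(\hbar\Z^n)$), where the interchanges of sum and integral and the step $(\Opzn(\sigma_\hbar)^*)^*=\Opzn(\sigma_\hbar)$ are justified — this is the same level of formality at which the paper itself operates, so it is not a gap relative to the original proof.
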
 

\begin{proof}[Proof of Theorem \ref{THM:link}]
For $\varphi\in C^\infty(\Tn$) and for $\sigma_{\hbar}$ as in the hypothesis, consider the operator
$$
  T\varphi(k) := \int_{\mathbb{T}^n}e^{2\pi  \frac{i}{\hbar} k\cdot \theta}\sigma_\hbar(k,\theta)\varphi(\theta)\text{d}\theta\,,
$$
where $k=\hbar \overline{k} \in \hbar \Z^n$.
Formula \eqref{neil} rewritten in terms of the operator $T$ yields 
\begin{equation}\label{EQ:rel1}
\Opzn(\sigma_\hbar) = T\circ\mathcal{F}_{\hbar \Zn}.
\end{equation} 
The adjoint operator $T^*$ must satisfy the relation
\begin{equation}
\label{rel.1.l2}
 (T\varphi,\eta)_{\ell^2(\hbar \Zn)}=(\varphi,T^*\eta)_{L^2(\Tn)}\,, \quad \varphi, \eta \in \ell^2(\hbar \Z^n)\,.  \end{equation}
Now, expanding the left-hand side of \eqref{rel.1.l2} we get
\begin{multline*}
(T\varphi,\eta)_{\ell^2(\hbar \Zn)}=\sum_{k\in\hbar \Zn} T\varphi(k)\overline{\eta(k)}=
\sum_{k\in\hbar \Zn} \int_{\mathbb{T}^n}e^{2\pi  \frac{i}{\hbar} k\cdot \theta}\sigma_\hbar(k,\theta)\varphi(\theta)\overline{\eta(k)} \text{d}\theta\,,
\end{multline*} 
or equivalently
\[
 (T\varphi,\eta)_{\ell^2(\hbar \Zn)}= \int_{\mathbb{T}^n} \varphi(\theta)\left(\sum_{k\in\hbar \Zn}e^{2\pi  \frac{i}{\hbar} k\cdot \theta}\sigma_\hbar(k,\theta)\overline{\eta(k)}\right) \text{d}\theta\,.
\]
The latter means that
\begin{equation}\label{EQ:Tadj}
\begin{aligned}
T^*\eta(\theta) & =\sum_{k\in\hbar \Zn}e^{-2\pi  \frac{i}{\hbar} k\cdot \theta}\overline{\sigma_\hbar(k,\theta)}\eta(k) \\
& = \sum_{k\in\hbar \Zn}e^{2\pi i \overline{k}\cdot \theta}\overline{\sigma_\hbar(-\hbar \overline{k},\theta)}\eta(-k) \quad (\text{where}\,\,k=\hbar \overline{k}) \\
& = \sum_{k\in\hbar \Zn}e^{2\pi i \overline{k}\cdot \theta}\tau_\hbar(\theta,\overline{k}) \eta(-k)\\
& = \sum_{\overline{k} \in \Zn}e^{2\pi i \overline{k}\cdot \theta}\tau_{\hbar}(\theta,\overline{k})\eta(-\hbar \overline{k})\,.
\end{aligned}
\end{equation} 
Now since $\mathcal{F}_{\hbar \Zn}$ is a bijection, there exists $v$ such that $\mathcal{F}_{\hbar \Zn}^{-1}v(k)=\eta(k)$. Using the relation \eqref{RelationFtFz} the latter implies that $\Ftn v(-\overline{k})=\mathcal{F}_{\hbar \Zn}^{-1}v(k)$. Hence using the formula \eqref{ClassicalOPT} and equalities \eqref{EQ:Tadj} we can write
\begin{equation}\label{T*}
T^*\eta(\theta)=\sum_{k\in\hbar \Zn}e^{2\pi i \overline{k}\cdot \theta}\tau_\hbar(\theta,\overline{k}) \eta(-k)=\sum_{k\in \Zn}e^{2\pi i \overline{k}\cdot \theta}\tau_\hbar(\theta,\overline{k}) \Ftn v(\overline{k})=\Optn(\tau_\hbar) v(\theta)\,.
\end{equation}
On the other hand, we have 
\begin{eqnarray}
    \label{T**}
    \left(\Optn(\tau_\hbar) ({F}_{\hbar \Zn}\eta\right)(\theta) & = & \sum_{\overline{k}\in \Zn}e^{2\pi i \overline{k}\cdot \theta}\tau_\hbar(\theta,\overline{k}) \Ftn {F}_{\hbar \Zn}\eta(\overline{k})\nonumber\\
    & = & \sum_{\overline{k}\in \Zn}e^{2\pi i \overline{k}\cdot \theta}\tau_\hbar(\theta,\overline{k}) {\mathcal{F}}_{\hbar \Zn}^{-1}{\mathcal{F}}_{\hbar \Zn}\eta(-k)\nonumber\\
    & = & \sum_{\overline{k}\in \Zn}e^{2\pi i \overline{k}\cdot \theta}\tau_\hbar(\theta,\overline{k}) \eta(-k)\nonumber\\
    & = & \sum_{\overline{k}\in \Zn}e^{2\pi i \overline{k}\cdot \theta}\tau_\hbar(\theta,\overline{k}) \mathcal{F}_{\T^n}v(\overline{k})\nonumber\\
    & = & \Op_{\Tn}(\tau_\hbar)v(\theta)\,,
\end{eqnarray}
since by the above $\mathcal{F}_{\T^n}v(\overline{k})=\eta(-k)$. Now a combination of \eqref{T*} and \eqref{T**} yields

\begin{equation}\label{EQ:rel2}
T^*=\Optn(\tau_\hbar) \circ\mathcal{F}_{\hbar \Zn}.
\end{equation}
Consequently, the unitarity of the Fourier transform implies that
\begin{equation}\label{EQ:rel3}
T=\mathcal{F}_{\hbar \Zn}^{*}\circ \Optn(\tau_\hbar)^* = \mathcal{F}_{\hbar \Zn}^{-1}\circ \Optn(\tau_\hbar)^*\,.
\end{equation}
 Thus, combining \eqref{EQ:rel3} with \eqref{EQ:rel1}, we get 
\begin{equation}\label{Op.rel}
\Opzn(\sigma_\hbar)=\mathcal{F}_{\hbar \Zn}^{-1}\circ \Optn(\tau_\hbar)^*\circ \mathcal{F}_{\hbar \Zn}\,.
\end{equation}
Formula \eqref{EQ:link2} follows by \eqref{Op.rel} using similar arguments. This proof is  now complete.
\end{proof}
  
     \section{Applications}
   \label{SEC:L2-HS}
   In this section we investigate the conditions that can guarantee the boundedness of semi-classical pseudo-differential operators on $\ell^2(\hbar \Zn)$ and weighted $\ell^p(\hbar \Zn)$ spaces. Additionally the conditions for the membership in the Scahhhten classes are studied, as well as  a condition for the pseudo-differential operators to be Hilbert-Schmidt.

   \subsection{Continuity of semi-classical pseudo-differential operators}

    In this subsection we show results on the boundedness of the semi-classical pseudo-diffrential operators on different $\ell^p(\hbar \mathbb{Z}^n)$ spaces. In particular, Proposition \ref{PROP:HS} gives a sufficient and necessary condition on a semi-classical symbol $\sigma_\hbar$ for the the corresponding pseudo-differential operator to be Hilbert-Schmidt. Curiously, it also gives a sufficient condition for the operator to be bounded from $\ell^p(\hbar \mathbb{Z}^n)$ to $\ell^q(\hbar \mathbb{Z}^n)$, where $(p,q)$ are conjugate exponents. Regarding the special case where $p=q=2$ the sufficient condition for the boundedness of $\text{Op}_{\sigma_\hbar}$ becomes significantly more relaxed, in the sense that finitely many derivatives have to be bounded; see Theorem \ref{THM:L2}.
    
\smallskip

   Before moving on to prove our main results, let us recall that a bounded operator $T: H \rightarrow H$ acting on a Hilbert space $H$ is called \textit{Hilbert-Schmidt}, and we write $T\in \mathscr{L}(H)$, if it has finite Hilbert-Schmidt norm, i.e., if
    \[\|T\|_{\HS}^{2} := \sum_{i \in \mathcal{I}}\|Te_i\|_{H}^{2} < \infty\,,\]
    where $\{e_i : i \in \mathcal{I}\}$ is an orthonormal basis of $H$.
    
\smallskip

   \begin{proposition}\label{PROP:HS}
   The semi-classical pseudo-diffrential operator  $\Op_{\hbar}(\sigma_\hbar): \ell^{2}({\hbar\mathbb Z}^n)\rightarrow \ell ^{2}({ \hbar\mathbb Z}^n)$ is a Hilbert-Schmidt operator if and only if $\sigma_\hbar \in L^{2}(\hbar \mathbb{Z}^{n}\times\mathbb{T}^n)$. In this case, the Hilbert-Schmidt norm is given by 
   \begin{equation}\label{EQ:HS}
\|\Op_{\hbar}(\sigma_\hbar)\|_{\HS} = \|\sigma_\hbar\|_{L^{2}(\hbar\mathbb{Z}^{n}\times\mathbb{T}^n)}
   = \left(\sum_{k\in  \mathbb  \hbar \Z^n}\int_{\mathbb{T}^n}|\sigma_\hbar(k,\theta)|^{2}\text{\rm d}\theta \right)^{\frac12}.
\end{equation} 
   Furthermore,  if $\sigma_\hbar\in L^2(\hbar \Z^n \times\Tn)$ then $\Op(\sigma_\hbar):\ell^p(\hbar \Z^n)\to\ell^q(\hbar \Z^n)$ is bounded for all $1\leq p\leq 2$ and $\frac1p+\frac1q=1$, and we get that
   \begin{equation}\label{EQ:lplq}
\|\Op_{\hbar}(\sigma_\hbar)\|_{\mathscr{L}(\ell^p(\hbar \Z^n)\to \ell^q(\hbar \Z^n))}\leq \|\sigma_\hbar\|_{L^{2}(\hbar \Z^{n}\times\mathbb{T}^n)}.
\end{equation} 

   \end{proposition}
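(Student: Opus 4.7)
The plan is to reduce the problem to the kernel representation \eqref{EQ:kernels} and apply Plancherel. First I would use the formula $K(k,m) = \kappa(k, k-m)$ with $\kappa(k, l) = \int_{\T^n} e^{2\pi \frac{i}{\hbar} l\cdot\theta}\sigma_\hbar(k,\theta)\,\mathrm{d}\theta$, and observe that for each fixed $k$, the function $l\mapsto\kappa(k, l)$ is precisely the inverse Fourier transform on $\hbar\Z^n$ of $\sigma_\hbar(k, \cdot)$; equivalently $\sigma_\hbar(k,\cdot) = \mathcal{F}_{\hbar\Z^n}\kappa(k,\cdot)$. Applying the Plancherel formula \eqref{perrin} at each $k$ and then summing in $k$, together with the change of index $l = k-m$ (at fixed $k$), converts the double sum $\sum_{k,m\in\hbar\Z^n}|K(k,m)|^2$ into $\sum_{k\in\hbar\Z^n}\int_{\T^n}|\sigma_\hbar(k,\theta)|^2\mathrm{d}\theta = \|\sigma_\hbar\|_{L^2(\hbar\Z^n \times \T^n)}^2$.

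The Hilbert-Schmidt identity \eqref{EQ:HS} now follows from the standard characterisation: a bounded operator on $\ell^2(\hbar\Z^n)$ acting via a kernel against the standard orthonormal basis $\{\delta_m\}_{m\in\hbar\Z^n}$ is Hilbert-Schmidt precisely when the kernel lies in $\ell^2(\hbar\Z^n \times \hbar\Z^n)$, with equal norms. For the reverse direction, if $\Op_\hbar(\sigma_\hbar)$ is Hilbert-Schmidt its kernel is automatically in $\ell^2$, and inverting the Plancherel computation forces $\sigma_\hbar \in L^2(\hbar\Z^n \times \T^n)$; uniqueness of the symbol granted by Proposition \ref{PROP:symbols} closes the equivalence.

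For the $\ell^p \to \ell^q$ bound \eqref{EQ:lplq}, the plan is to interpolate between the two endpoints $(p,q) = (2,2)$ and $(p,q) = (1,\infty)$ by Riesz--Thorin. The $\ell^2 \to \ell^2$ bound is already in hand, since the Hilbert--Schmidt norm dominates the operator norm. For the $\ell^1 \to \ell^\infty$ bound I would estimate the kernel pointwise: Cauchy--Schwarz on the torus together with $|\T^n| = 1$ gives $|K(k,m)| \leq \|\sigma_\hbar(k,\cdot)\|_{L^1(\T^n)} \leq \|\sigma_\hbar(k,\cdot)\|_{L^2(\T^n)}$, and dominating a single slice by the full square sum gives $|K(k,m)| \leq \|\sigma_\hbar\|_{L^2(\hbar\Z^n \times \T^n)}$ uniformly in $k,m$; hence $\|\Op_\hbar(\sigma_\hbar)f\|_{\ell^\infty}\leq \|\sigma_\hbar\|_{L^2}\|f\|_{\ell^1}$. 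Interpolation then yields the full range $1\leq p\leq 2$ with no loss in the constant. I do not anticipate a serious obstacle; the only delicate point is checking that the Plancherel computation of the first paragraph is valid under the mere assumption $\sigma_\hbar\in L^2$ (so that $\kappa(k,\cdot)$ makes sense distributionally for almost every $k$), which follows from Fubini once finiteness of the double integral is established.
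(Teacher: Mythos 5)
Your argument is correct. The Hilbert--Schmidt half is exactly what the paper intends: the paper's proof simply says the identity \eqref{EQ:HS} is ``evident from the Plancherel formula'', and your kernel computation $\sum_{k,m}|K(k,m)|^2=\sum_k\|\kappa(k,\cdot)\|_{\ell^2}^2=\sum_k\|\sigma_\hbar(k,\cdot)\|_{L^2(\T^n)}^2$, combined with the characterisation of Hilbert--Schmidt operators through the matrix in the delta basis, is the standard way to make that precise in both directions.

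For the $\ell^p\to\ell^q$ bound the paper omits the proof and refers to \cite[Proposition 5.1]{BKR20}, where the argument is direct rather than interpolative: one estimates $|\Op_\hbar(\sigma_\hbar)f(k)|\le\|\sigma_\hbar(k,\cdot)\|_{L^2(\T^n)}\,\|\widehat f\|_{L^{p'}(\T^n)}$ using H\"older on the torus (together with $L^{p'}\subset L^2$-type embeddings on the probability space $\T^n$), controls $\|\widehat f\|_{L^{p'}}$ by $\|f\|_{\ell^p}$ via the Hausdorff--Young inequality, and then takes the $\ell^q$-norm in $k$, using $\ell^2\subset\ell^q$ for $q\ge 2$ to arrive at the constant $\|\sigma_\hbar\|_{L^2(\hbar\Z^n\times\T^n)}$. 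Your route is genuinely different: you establish the two endpoints $(p,q)=(2,2)$ (operator norm dominated by the Hilbert--Schmidt norm) and $(p,q)=(1,\infty)$ (uniform kernel bound $|K(k,m)|\le\|\sigma_\hbar\|_{L^2(\hbar\Z^n\times\T^n)}$) and invoke Riesz--Thorin, which indeed yields the whole range $1\le p\le 2$ with the same constant, since both endpoint norms equal $\|\sigma_\hbar\|_{L^2}$. What the direct approach buys is independence from interpolation theory (everything reduces to H\"older, Hausdorff--Young and nesting of $\ell^p$ spaces); what your approach buys is that it bypasses Hausdorff--Young entirely and reuses the kernel and Hilbert--Schmidt computations you already made, at the modest cost of checking the (routine) applicability of Riesz--Thorin on the dense subspace of finitely supported sequences. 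Both arguments are complete and give \eqref{EQ:lplq} as stated.
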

   \begin{proof}[Proof of Proposition \ref{PROP:HS}]
       Our first claim on the Hilbert-Schmidt norm is evident if one takes into account the Plancherel formula in this setting. The boundedness result follows the lines of \cite[Proposition 5.1]{BKR20} and will be omitted.
   \end{proof}

   \begin{thm}\label{THM:L2}
 Let $\varkappa\in\mathbb N$ and $\varkappa>n/2$.
Assume that the symbol $\sigma_\hbar:\hbar \Zn\times\Tn\to\C$ satisfies
\begin{equation}\label{EQ:l2conds}
|D_{\hbar,\theta}^{(\alpha)}\sigma_\hbar( k,\theta)|\leq C, \quad\textrm{ for all }\; (k,\theta)\in \hbar \Zn\times\Tn,
\end{equation} 
for all $|\alpha|\leq \varkappa$. Then the semi-classical pseudo-differential operator $\Op(\sigma_\hbar)$ extends to a bounded operator on $\ell^2(\hbar \Zn)$.
\end{thm}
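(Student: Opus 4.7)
The plan is to prove $\ell^2$ boundedness by applying a Cauchy--Schwarz-refined Schur test to the kernel representation of $\Op(\sigma_\hbar)$. From \eqref{EQ:kernels} we have $\Op(\sigma_\hbar)f(k) = \sum_{m\in\hbar\Zn} K(k,m)f(m)$ with $K(k,m) = \kappa(k,k-m)$ and $\kappa(k,l) = \int_{\Tn} e^{2\pi i l\cdot\theta/\hbar}\sigma_\hbar(k,\theta)\,\dd\theta$. The first step is to observe that, writing $k-m=\hbar\tilde l$ with $\tilde l \in \Zn$, the quantity $\kappa(k,\hbar\tilde l)$ is precisely the $(-\tilde l)$-th toroidal Fourier coefficient of $\theta\mapsto \sigma_\hbar(k,\theta)$. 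Therefore, by Parseval's identity on $\Tn$,
\[
\sum_{\tilde l \in \Zn}(1+|\tilde l|^2)^{\varkappa} |\kappa(k,\hbar\tilde l)|^2 \leq C\,\|\sigma_\hbar(k,\cdot)\|_{H^\varkappa(\Tn)}^{2}.
\]
Using the relation $\partial_{\theta_j} = (2\pi i/\hbar)D_{\hbar,\theta_j}$ together with the Stirling-type identities between $D_{\hbar,\theta}^{(\alpha)}$ and $D_{\hbar,\theta}^{\alpha}$ from Definition \ref{part.der.theta}, the hypothesis $|D_{\hbar,\theta}^{(\alpha)}\sigma_\hbar(k,\theta)|\leq C$ for $|\alpha|\leq\varkappa$ translates into a bound on this Sobolev norm, uniform in $k \in \hbar\Zn$ (with an implicit $\hbar$-dependence, which is acceptable since the theorem only asserts boundedness for each fixed $\hbar$).

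Next, I apply Cauchy--Schwarz with the weight $w(l) := (1+|l/\hbar|^2)^{\varkappa}$ to obtain
\[
|\Op(\sigma_\hbar)f(k)|^{2}
\leq \Bigl(\sum_{m\in \hbar\Zn} w(k-m)|K(k,m)|^2\Bigr)\Bigl(\sum_{m\in \hbar\Zn} w(k-m)^{-1}|f(m)|^{2}\Bigr).
\]
Summing over $k\in\hbar\Zn$, the first factor is majorised uniformly in $k$ by the Sobolev bound of the preceding paragraph, while in the second factor swapping the order of summation produces the constant $\sum_{k\in\hbar\Zn}w(k-m)^{-1} = \sum_{\tilde l \in \Zn}(1+|\tilde l|^2)^{-\varkappa}$. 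This last series converges \emph{precisely} when $\varkappa>n/2$, which is exactly the role played by the hypothesis of the theorem. Combining these estimates produces the desired inequality $\|\Op(\sigma_\hbar)f\|_{\ell^{2}(\hbar\Zn)} \leq C_\hbar \|f\|_{\ell^{2}(\hbar\Zn)}$.

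The main obstacle is bookkeeping with the $\hbar$-scaled derivative operators: one must convert the hypothesis, stated in terms of the Stirling-type operators $D_{\hbar,\theta}^{(\alpha)}$, into the corresponding control of the $H^{\varkappa}(\Tn)$ Sobolev norm of $\sigma_\hbar(k,\cdot)$, which naturally arises from the Parseval step. This rests on the algebraic equivalence between $D_{\hbar,\theta}^{(\alpha)}$ and $D_{\hbar,\theta}^{\alpha}$ from Definition \ref{part.der.theta}, after which the rescaling $\partial_{\theta_j} = (2\pi i/\hbar)D_{\hbar,\theta_j}$ is routine. The integer threshold $\varkappa>n/2$ is sharp in this argument, being forced by the convergence of the weighted counting sum on $\Zn$.
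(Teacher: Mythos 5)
Your proof is correct, but it takes a genuinely different route from the paper. The paper's proof is a short reduction: it invokes Theorem \ref{THM:link} to conjugate $\Op(\sigma_\hbar)$ by the (unitary) semi-classical Fourier transform into the adjoint of a toroidal operator $\Optn(\tau_\hbar)$ with $\tau_\hbar(\theta,\overline{k})=\overline{\sigma_\hbar(-\hbar\overline{k},\theta)}$, and then cites the known toroidal $L^2$-boundedness theorem (Theorem 4.8.1 in \cite{ruzhansky2009pseudo}), so boundedness on $\ell^2(\hbar\Zn)$ is equivalent to boundedness on $L^2(\Tn)$. You instead argue directly on the lattice: the kernel identity \eqref{EQ:kernels} exhibits $\kappa(k,\hbar\tilde l)$ as toroidal Fourier coefficients of $\sigma_\hbar(k,\cdot)$, Parseval converts the hypothesis (after the Stirling-type passage from $D_{\hbar,\theta}^{(\alpha)}$ to $D_{\hbar,\theta}^{\alpha}$ and the rescaling $\partial_{\theta_j}=(2\pi i/\hbar)D_{\hbar,\theta_j}$, harmless for fixed $\hbar$) into a bound on $\sup_k\|\sigma_\hbar(k,\cdot)\|_{H^\varkappa(\Tn)}$, and the weighted Cauchy--Schwarz/Schur argument with weight $(1+|\tilde l|^2)^{\varkappa}$ closes the estimate, with $\varkappa>n/2$ entering exactly through the convergence of $\sum_{\tilde l\in\Zn}(1+|\tilde l|^2)^{-\varkappa}$. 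In effect you reprove, in the lattice-quantized setting, the same kernel argument that underlies the toroidal theorem the paper quotes. What your approach buys is a self-contained proof with an explicit operator-norm bound $\|\Op(\sigma_\hbar)\|_{\mathscr{L}(\ell^2(\hbar\Zn))}\leq C_n\sup_{k}\|\sigma_\hbar(k,\cdot)\|_{H^{\varkappa}(\Tn)}$ and a transparent role for the threshold $\varkappa>n/2$; what the paper's approach buys is brevity and a demonstration of the transference mechanism of Theorem \ref{THM:link}. One minor caveat, consistent with the statement being for fixed $\hbar$: your constants acquire a factor of order $\hbar^{-\varkappa}$ from the rescaling of derivatives, so no uniformity in $\hbar$ is claimed or obtained (the same is true of the paper's proof, since the hypothesis \eqref{EQ:l2conds} is phrased with the $\hbar$-scaled operators $D_{\hbar,\theta}^{(\alpha)}$). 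You should also note, for completeness, that the kernel manipulation and interchange of sum and integral are justified on a dense subspace such as finitely supported $f$, which suffices for the claimed extension.
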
 

\begin{proof}
Let $\sigma_\hbar$ be as in the hypothesis. Then the symbol $\tau_\hbar$ related to $\sigma_\hbar$ as in Theorem \ref{THM:link} gives rise to the bounded on $L^2(\T^n)$ pseudo-differential operator $\Op_{\T^n}(\tau_\hbar)$; see \cite[Theorem 4.8.1]{ruzhansky2009pseudo}. On the other hand taking in to account that the operator $\mathcal{F}_{\hbar \Z^n}$ is an isometry from $\ell^2(\hbar \Zn)$ to $L^2(\Tn)$, and the relation \eqref{EQ:link1} in Theorem \ref{THM:link} we see that  $\Op(\sigma_\hbar)\equiv \Opzn(\sigma_\hbar)$ is bounded on $\ell^2(\hbar \Zn)$ if and only if
$\Optn(\tau_\hbar)$ is bounded on $L^2(\Tn)$. This completes the proof of Theorem \ref{THM:L2}.
\end{proof}

\subsection{Compactness, Gohberg lemma, and Schatten-von Neumann classes}

In this section we study the compactness  of the semi-classical pseudo-differential operators on $\ell^2(\hbar \Z^n)$, the distance between them and the space of compact operators on $\ell^2(\hbar \Z^n)$, and the sufficient conditions for the Schatten classes of semi-classical pseudo-differential operators; see Corollary \ref{COR:comp}, \ref{COR:compG} and Theorem \ref{THM:sch}, respectively. 

\smallskip

To ensure a self-contained presentation of our results, in the following remark we recall the necessary notions that are involved in the subsequent analysis.

\begin{rem} Let us recall some useful notions:
    \begin{enumerate}
        \item (Essential spectrum) Let $T$ be a closed linear operator on a complex Hilbert space $H$. The \textit{essential spectrum} of $T$, usually denoted by $\Sigma_{ess}(T)$ is the set of complex numbers $\lambda \in \mathbb{C}$ such that 
        \[
        T-\lambda I
        \]
        is  not a Fredholm operator, where $I$ is the identity operator.
        \item (Schatten-von Neumann
classes) Let $T: H \rightarrow H$ be a compact (linear) operator,  let $|T|:=(T^{*}T)^{1/2}$ be the \textit{absolute value of $T$}, and let let $s_n(T)$ be the \textit{singular values of $T$}, i.e., the eigenvalues of $|T|$. We say that the operator $T$ belongs to the \textit{Schatten-von Neumann class of operators $S_{p}(H)$}, where $1\leq p< \infty$, if
\[
\|T\|_{S_p}:=\left( \sum_{k=1}^{\infty}(s_{k}(T))^{p} \right)^{\frac{1}{p}}<\infty\,.
\]
The space $S_{p}$ is a Banach space if endowed with the natural norm $\|\cdot\|_{S_p}$
\item (Trace class operators) The Banach space $S_{1}(H)$ is the space of \textit{trace-class operators}, while for $T\in S_1$ the quantity
\[
\textnormal{Tr}(T):=\sum_{n=1}^{\infty}(Te_n,e_n)\,,
\]
where $(e_n)$ is an orthonormal basis in $H$, is well-defined and shall be called the \textit{trace $\textnormal{Tr}(T)$ of $T$}.
    \end{enumerate}
\end{rem}
In the sequel we have defined by $d$ the following quantity:
\begin{equation}\label{EQ:dd}
d:=\limsup_{ |k|\to\infty} \sup_{\theta\in\Tn} |\sigma_\hbar(k,\theta)|\,,
\end{equation}
where $(k, \theta) \in \hbar \Zn \times \Tn$.
Let us now present the main results of this subsection:

\begin{cor}\label{COR:comp}
Let $\sigma_\hbar\in S^0(\hbar \Zn\times\Tn)$. 
Then the semi-classical pseudo-differential  operator $\Opzn(\sigma_\hbar)$ is compact on $\ell^2(\hbar \Zn)$ if and only if $d=0$, where $d$ is as in \eqref{EQ:dd}.
Moreover, we have
$$
\Sigma_{ess}(\Opzn(\sigma_\hbar)) \subset \{\lambda\in\mathbb{C}:|\lambda|\leq d\}.
$$
\end{cor}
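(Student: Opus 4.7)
The plan is to transfer the question from the lattice $\hbar \Zn$ to the torus $\Tn$ using the unitary link established in Theorem \ref{THM:link}, and then invoke the corresponding well-known toroidal compactness criterion and Gohberg-type estimate.

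First I would apply formula \eqref{EQ:link1}, which gives
$\Opzn(\sigma_\hbar) = \mathcal{F}_{\hbar \Zn}^{-1} \circ \Optn(\tau_\hbar)^{*} \circ \mathcal{F}_{\hbar \Zn}$, where $\tau_\hbar(\theta, \overline{k}) = \overline{\sigma_\hbar(-\hbar \overline{k}, \theta)}$. Since $\mathcal{F}_{\hbar \Zn}: \ell^2(\hbar \Zn) \to L^2(\Tn)$ is an isometric bijection by the Plancherel identity \eqref{perrin}, the operator $\Opzn(\sigma_\hbar)$ is unitarily equivalent to $\Optn(\tau_\hbar)^{*}$. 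In particular, compactness is preserved under both unitary equivalence and taking adjoints, so $\Opzn(\sigma_\hbar)$ is compact on $\ell^2(\hbar \Zn)$ if and only if $\Optn(\tau_\hbar)$ is compact on $L^2(\Tn)$. Similarly, $\Sigma_{ess}(\Opzn(\sigma_\hbar)) = \overline{\Sigma_{ess}(\Optn(\tau_\hbar))}$.

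Next I would rewrite the number $d$ in toroidal terms. The substitution $k = -\hbar \overline{k}$ satisfies $|k| = \hbar|\overline{k}|$ with $\hbar>0$ fixed, so $|k|\to\infty$ if and only if $|\overline{k}|\to\infty$, and $|\sigma_\hbar(k,\theta)| = |\tau_\hbar(\theta,-k/\hbar)|$. Consequently
\begin{equation*}
d = \limsup_{|\overline{k}|\to\infty}\, \sup_{\theta\in\Tn} |\tau_\hbar(\theta,\overline{k})|.
\end{equation*}

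It then remains to apply the toroidal version of the result: for symbols $\tau_\hbar \in S^0(\Tn\times\Zn)$, the operator $\Optn(\tau_\hbar)$ is compact on $L^2(\Tn)$ iff the above limsup is zero, and $\Sigma_{ess}(\Optn(\tau_\hbar)) \subset \{\lambda\in\C:|\lambda|\leq d\}$. This toroidal statement follows from approximating $\tau_\hbar$ in the frequency variable $\overline{k}$ by compactly supported cut-offs, which yield Hilbert--Schmidt (hence compact) operators by Proposition \ref{PROP:HS}, together with the $L^2$-boundedness bound from \cite{ruzhansky2009pseudo}; the essential spectrum bound is a standard Weyl sequence argument. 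Transferring back through the unitary equivalence yields compactness of $\Opzn(\sigma_\hbar)$, and since the disk $\{|\lambda|\leq d\}$ is invariant under complex conjugation, the stated bound on the essential spectrum follows as well. The main potential obstacle is locating the toroidal Gohberg lemma in the exact form above in the literature; if this is not available off the shelf, one sketches it directly along the lines just indicated, but no new difficulty arises in the semi-classical setting beyond bookkeeping with the parameter $\hbar$.
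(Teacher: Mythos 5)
Your proposal matches the paper's argument: the paper likewise conjugates by the unitary Fourier transform via \eqref{EQ:link1}, uses the invariance of compactness, Fredholmness and the index under unitary equivalence (and adjoints), and then invokes the toroidal compactness/Gohberg--essential spectrum result of Dasgupta--Ruzhansky on compact Lie groups, exactly as you do. Your extra remarks (rewriting $d$ for $\tau_\hbar$, conjugation-invariance of the disk, and the optional Hilbert--Schmidt cut-off sketch) are consistent details rather than a different route, so the proposal is correct and essentially identical in approach.
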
 
\begin{proof}[Proof of Corollary \ref{COR:comp}]
The main idea is a combination, on the one hand, of the fact that  the compactness, Fredhomlness, and the index are invariant under the action of unitary and of the relation \eqref{EQ:link1} in Theorem \ref{THM:link}, and on the other hand of \cite[Theorem 3.2]{DR:Gohberg} on the toroidal pseudo-differential operators. The rest of the arguments follows the lines of \cite[Corollary 5.3]{BKR20} and are omitted. 
\end{proof}


The next result gives a lower bound for the distance between a given operator and the space of compact operators on $\ell^2(\hbar \Zn)$. Such type of  statements were first shown by Gohberg in \cite{Gohberg}, and are now bearing his name. We refer to \cite{Mol11,Pir11} for such a result on the circle $\mathbb T^1$, and to \cite{DR:Gohberg} on general compact Lie groups. The analogous result in the lattice case was given in \cite{BKR20}.

\begin{cor}\label{COR:compG}{Gohberg lemma}
Let $\sigma_\hbar:\hbar \Zn\times\Tn\to\C$ be such that
\begin{equation}\label{EQ:Gcond}
|\sigma_\hbar( k,\theta)|\leq C,\quad |\nabla_{\hbar, \theta}\sigma_\hbar( k,\theta)|\leq C,\quad |\Delta_{\hbar,q}\sigma_\hbar(k,\theta)|\leq C(1+|k|)^{-\rho},
\end{equation} 
 for some $\rho>0$ and for all $q\in C^\infty(\Tn)$ with $q(0)=0$ and all $(k,\theta)\in\hbar \Zn\times\Tn$.
Then for all compact operators $K$ on $\ell^2(\hbar \Zn)$ we have
$$
\|\Opzn(\sigma_\hbar)-K\|_{\mathcal{L}(\ell^2(\hbar \Zn))}\geq d.
$$
In particular, this conclusion holds for any $\sigma_\hbar\in S^0(\hbar \Zn\times\Tn).$
\end{cor}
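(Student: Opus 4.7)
The plan is to deduce the semi-classical Gohberg lemma from its toroidal counterpart (as in \cite{DR:Gohberg}, also used in \cite{BKR20}) by exploiting the bridge established in Theorem \ref{THM:link}. By \eqref{EQ:link1} we have $\Opzn(\sigma_\hbar) = \mathcal{F}^{-1}_{\hbar \Zn} \circ \Optn(\tau_\hbar)^* \circ \mathcal{F}_{\hbar \Zn}$, where $\tau_\hbar(\theta,\overline{k}) = \overline{\sigma_\hbar(-\hbar \overline{k},\theta)}$. Since $\mathcal{F}_{\hbar \Zn} : \ell^2(\hbar \Zn) \to L^2(\Tn)$ is unitary by the Plancherel identity \eqref{perrin}, conjugation by it is an isometry of operator norms and a bijection between the compact ideals on the two Hilbert spaces. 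Combined with the fact that $\|A\|=\|A^*\|$ and that $K$ is compact iff $K^*$ is compact, this yields
\begin{equation*}
\|\Opzn(\sigma_\hbar)-K\|_{\mathcal{L}(\ell^2(\hbar \Zn))} \;=\; \|\Optn(\tau_\hbar)-\widetilde{K}\|_{\mathcal{L}(L^2(\Tn))},
\end{equation*}
for an associated compact operator $\widetilde K$ on $L^2(\Tn)$, and taking infima over $K$ (resp.\ $\widetilde K$) gives the same value. Thus it suffices to prove the toroidal bound.

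Next, I would verify that the hypotheses \eqref{EQ:Gcond} on $\sigma_\hbar$ translate into the conditions of the toroidal Gohberg lemma of \cite{DR:Gohberg} for $\tau_\hbar$. Boundedness $|\tau_\hbar(\theta,\overline{k})|\leq C$ is immediate from the first inequality in \eqref{EQ:Gcond}. The gradient bound in the toroidal variable $\theta$ for $\tau_\hbar$ follows from $|\nabla_{\hbar,\theta}\sigma_\hbar|\leq C$ after accounting for the $\hbar$-factor in the definition \eqref{def.PD} of $D_{\hbar,\theta}$. The main content is the translation of the third condition in \eqref{EQ:Gcond}: the generalised difference operator $\Delta_{\hbar,q}$ acting in the lattice variable $k = \hbar \overline{k}$, by \eqref{deltaoperator12}, becomes (up to the $\hbar^{-1}$ normalisation) the analogous generalised difference operator in $\overline{k}\in\Zn$ acting on $\tau_\hbar$, with the required decay $(1+|\overline{k}|)^{-\rho}$ following from the substitution $|k|=\hbar|\overline{k}|$ and the fact that $\hbar\in(0,1]$.

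Applying the toroidal Gohberg lemma then gives
\begin{equation*}
\|\Optn(\tau_\hbar)-\widetilde{K}\|_{\mathcal{L}(L^2(\Tn))} \;\geq\; \widetilde{d}, \qquad \widetilde d := \limsup_{|\overline{k}|\to\infty}\sup_{\theta\in\Tn}|\tau_\hbar(\theta,\overline{k})|.
\end{equation*}
Using the definition of $\tau_\hbar$ and the change of variable $k=-\hbar \overline{k}\in\hbar\Zn$, we compute
\begin{equation*}
\widetilde d \;=\; \limsup_{|\overline{k}|\to\infty}\sup_{\theta\in\Tn}|\sigma_\hbar(-\hbar \overline{k},\theta)| \;=\; \limsup_{|k|\to\infty}\sup_{\theta\in\Tn}|\sigma_\hbar(k,\theta)| \;=\; d,
\end{equation*}
which is exactly the claimed bound. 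The final assertion that every $\sigma_\hbar \in S^0(\hbar \Zn\times\Tn)$ satisfies \eqref{EQ:Gcond} follows from Definition \ref{DEF:symbolclasses} (with $\rho=1$, $\delta=0$): the zeroth-order symbol estimates give the first two bounds, and applying one $\Delta_{\hbar,q}$, which behaves analogously to a first-order difference (combined with Lemma \ref{lemma1} to handle the generalised $q$-difference), yields the decay $(1+|k|)^{-1}$.

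The main obstacle I expect is the careful tracking of the $\hbar$-scalings when translating the difference condition in \eqref{EQ:Gcond} into the hypothesis of the toroidal Gohberg lemma: one needs to confirm that the extra $\hbar^{-1}$ in \eqref{deltaoperator12} and the rescaling $k=\hbar\overline{k}$ are compatible with the uniform-in-$q$ formulation of the decay, so that no $\hbar$-dependent loss appears in the bound. The rest of the argument is essentially functorial, relying on unitarity of $\mathcal{F}_{\hbar\Zn}$ and invariance of Fredholm/compactness-type quantities under unitary conjugation and adjoints, in the spirit of \cite[Corollary 5.4]{BKR20}.
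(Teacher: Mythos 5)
Your proposal is correct and follows essentially the same route as the paper: the paper's proof consists precisely of invoking the conjugation identity \eqref{EQ:link1} of Theorem \ref{THM:link} together with the toroidal/compact-group Gohberg lemma of Dasgupta--Ruzhansky (as in \cite[Corollary 5.4]{BKR20}), which is what you spell out, including the unitary-invariance of the distance to compacts and the identification $\widetilde d=d$. The extra verifications you sketch (translation of the hypotheses \eqref{EQ:Gcond} to the toroidal symbol $\tau_\hbar$ for fixed $\hbar$) are left implicit in the paper but are consistent with its argument.
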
 
\begin{proof}
    The proof is a consequence of \eqref{EQ:link1} in Theorem \ref{THM:link} and \cite[Theorem 3.]{DR-JMPA}. See also \cite[Corollary 5.4]{BKR20}.
\end{proof}

The next theorem is an application of the developed calculus that presents the conditions ensuring that the corresponding operators belong to Schatten classes. 

\begin{thm}\label{THM:sch}
Let $0<p \leq 2$. We have the following implication
\begin{equation}\label{p=2}
\sum_{k \in \hbar \Z^n}\|\sigma_{\hbar}( k, \cdot)\|^{p}_{L^2(\T^n)}<\infty \Longrightarrow \text{Op}_{\hbar \Z^n}(\sigma_\hbar) \quad \text{is $p$-Schatten operator on} \quad \ell^2(\hbar \Z^n)\,.
\end{equation}
In particular, if the left-hand side of \eqref{p=2} holds true for $p=1$, then the operator $\text{Op}_{\hbar \Z^n}(\sigma_\hbar)$ is trace class, and its trace can be calculated as follows:
\begin{equation}\label{trace.thm}
\textnormal{Tr}(\Opzn(\sigma_\hbar))=\sum_{k \in \Z^n}\int_{\T^n} \sigma_{\hbar} (k, \theta)\,d\theta=\sum_{j\in \mathcal{J}}\lambda_j\,,
\end{equation}
where the set $\{\lambda_j, j \in \mathcal{J}\}$ is the set of eigenvalues of $\text{Op}_{\hbar \Z^n}(\sigma_\hbar)$ (multiplicities counted).
    \end{thm}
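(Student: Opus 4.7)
My plan is to realise $\Opzn(\sigma_\hbar)$ as an absolutely convergent sum of rank-one operators whose individual $S_p$ quasi-norms are exactly $\|\sigma_\hbar(k,\cdot)\|_{L^2(\Tn)}$, then combine the $p$-triangle inequality in $S_p$ for $0<p\le 1$ with complex interpolation against the case $p=2$ (already covered by Proposition \ref{PROP:HS}) to sweep the full range $0<p\le 2$. The trace formula will then follow by computing the kernel on the diagonal and invoking Lidskii's theorem.

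\textbf{Rank-one decomposition.} First I would factor $\Opzn(\sigma_\hbar)=T\circ \mathcal{F}_{\hbar\Zn}$, where $T\colon L^2(\Tn)\to\ell^2(\hbar\Zn)$ is given by
\begin{equation*}
Tg(k):=\int_{\Tn}a_k(\theta)g(\theta)\dd\theta,\qquad a_k(\theta):=e^{2\pi\frac{i}{\hbar}k\cdot\theta}\sigma_\hbar(k,\theta).
\end{equation*}
Since $\mathcal{F}_{\hbar\Zn}\colon\ell^2(\hbar\Zn)\to L^2(\Tn)$ is unitary by the Plancherel identity \eqref{perrin}, we have $\|\Opzn(\sigma_\hbar)\|_{S_p}=\|T\|_{S_p}$. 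Expanding along the canonical basis $\{e_k\}_{k\in\hbar\Zn}$ of $\ell^2(\hbar\Zn)$ one obtains $T=\sum_{k\in\hbar\Zn}R_k$ with $R_k g:=\jp{g,\overline{a_k}}_{L^2(\Tn)}e_k$ of rank one, and since the exponential phase is unimodular,
$\|R_k\|_{S_p}=\|a_k\|_{L^2(\Tn)}=\|\sigma_\hbar(k,\cdot)\|_{L^2(\Tn)}$.

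\textbf{Schatten estimate.} For $0<p\le 1$ the quasi-triangle inequality $\|A+B\|_{S_p}^p\le\|A\|_{S_p}^p+\|B\|_{S_p}^p$ applied to this decomposition yields immediately
\begin{equation*}
\|\Opzn(\sigma_\hbar)\|_{S_p}^p\le\sum_{k\in\hbar\Zn}\|\sigma_\hbar(k,\cdot)\|_{L^2(\Tn)}^p,
\end{equation*}
while for $p=2$ equality holds by Proposition \ref{PROP:HS}. For $1<p<2$ I would apply complex interpolation to the linear assignment $\sigma_\hbar\mapsto\Opzn(\sigma_\hbar)$ between the endpoints $\ell^1(\hbar\Zn;L^2(\Tn))\to S_1$ and $\ell^2(\hbar\Zn;L^2(\Tn))\to S_2$, using the standard identifications $[S_1,S_2]_\theta=S_{p_\theta}$ together with the vector-valued identity $[\ell^1(L^2),\ell^2(L^2)]_\theta=\ell^{p_\theta}(L^2)$ with $1/p_\theta=1-\theta/2$. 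An alternative route that bypasses vector-valued interpolation is to transfer the problem to the toroidal setting via Theorem \ref{THM:link} (with $\tau_\hbar(\theta,\overline{k})=\overline{\sigma_\hbar(-\hbar\overline{k},\theta)}$) and to invoke the analogous Schatten criterion for toroidal pseudo-differential operators.

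\textbf{Trace formula and main obstacle.} Once $\Opzn(\sigma_\hbar)$ is known to be trace class at $p=1$, evaluating the trace in the standard basis and using the kernel formula \eqref{EQ:kernels} with its diagonal value \eqref{EQ:kerdiag} gives
\begin{equation*}
\textnormal{Tr}(\Opzn(\sigma_\hbar))=\sum_{k\in\hbar\Zn}(\Opzn(\sigma_\hbar)e_k)(k)=\sum_{k\in\hbar\Zn}K(k,k)=\sum_{k\in\hbar\Zn}\int_{\Tn}\sigma_\hbar(k,\theta)\dd\theta,
\end{equation*}
and Lidskii's theorem furnishes the equality with $\sum_j\lambda_j$. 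The step I expect to require the most care is the justification of complex interpolation in the vector-valued mixed-norm spaces $\ell^p(\hbar\Zn;L^2(\Tn))$; this is standard but calls for the usual analytic families of operators, and it can in any case be avoided by routing the argument through Theorem \ref{THM:link} and quoting the toroidal counterpart.
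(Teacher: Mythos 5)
Your proposal is correct, and for the range $0<p\le 1$ it takes a genuinely different route from the paper. The paper disposes of $p\in(0,1]$ by transferring the operator to the torus through the link formula \eqref{EQ:link1} of Theorem \ref{THM:link} and quoting the toroidal Schatten/nuclearity result together with the coincidence of $p$-nuclearity and the $p$-Schatten property for $0<p\le1$; it then treats $p\in[1,2]$ by interpolation against the Hilbert--Schmidt characterisation \eqref{EQ:HS}, and obtains the trace formula from the diagonal kernel value \eqref{EQ:kerdiag} plus Lidskii --- the latter two steps being exactly what you do. Your replacement of the transference step by the explicit rank-one decomposition $T=\sum_{k}R_k$ (after stripping off the unitary factor $\mathcal{F}_{\hbar\Zn}$, which is legitimate because Schatten quasi-norms are invariant under composition with unitaries, thanks to \eqref{perrin}) combined with the $p$-triangle inequality $\|A+B\|_{S_p}^p\le\|A\|_{S_p}^p+\|B\|_{S_p}^p$ (the McCarthy--Rotfel'd fact that $S_p$ is a $p$-normed ideal for $p\le1$) is more elementary and self-contained, avoids the external citations, and yields the explicit bound $\|\Opzn(\sigma_\hbar)\|_{S_p}^p\le\sum_{k\in\hbar\Zn}\|\sigma_\hbar(k,\cdot)\|_{L^2(\Tn)}^p$ with constant one; the paper's route buys brevity and stays consistent with its general strategy of reducing lattice statements to toroidal ones. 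Two points to make explicit in a final write-up: (i) the passage from finite partial sums to the full series needs a word --- e.g.\ the partial sums converge to $T$ in $S_2$ (since $\ell^p\subset\ell^2$ for $p\le2$ the Hilbert--Schmidt norm is finite) and the uniform $S_p$ bound passes to the limit by the Fatou property of Schatten quasi-norms; (ii) in the vector-valued complex interpolation for $1<p<2$ the $S_1$ endpoint is the one you yourself established at $p=1$, and the identities $[S_1,S_2]_\theta=S_{p_\theta}$, $[\ell^1(L^2),\ell^2(L^2)]_\theta=\ell^{p_\theta}(L^2)$ are standard but should be referenced; alternatively your suggested detour through Theorem \ref{THM:link} reproduces the paper's argument verbatim. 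The trace computation in the basis of delta functions, giving $\mathrm{Tr}=\sum_k K(k,k)=\sum_k\int_{\Tn}\sigma_\hbar(k,\theta)\,d\theta$ followed by Lidskii, coincides with the paper's.
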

\begin{proof}[Proof of Theorem \ref{THM:sch}]
    For the case $p \in (0,1]$ the notion of $p$-nuclearity\footnote{The notion of $p$-nuclearity initiated by Grothedieck in \cite{Groth-MAMS}. We refer to the work \cite{DR-JMPA} for a detailed discussion of it.} and $p$-Shatten classes coincide; see \cite{Oloff72} and \cite[Section 6.3.2.11]{Pietsch-history}. Taking this into account, the result for the case $p \in (0,1]$ follows as a consequence of \eqref{EQ:link1} and \cite[Corollarry 3.12]{DR:Gohberg}. For the trace class operators, using the expression \eqref{EQ:kerdiag} for the kernel, one can prove the first equality in \eqref{trace.thm}. The second equality in \eqref{trace.thm} is the well-known Lidskii formula \cite{Lidskii59}. For the case where $p \in [1,2]$ the result follows by interpolation using \eqref{EQ:HS}. This completes the proof.
\end{proof}

\begin{rem}
In \cite{Mantoiu-Ruzhansky-DM} the authors proved a result on the analysis of the Schatten classes of operators on  locally compact separable unimodular groups of Type I, which in our case reads as follows: for $p \in [2,\infty)$ and for $p'$ the  conjugate exponent of $p$ (i.e. $\frac{1}{p}+\frac{1}{p'}=1$) we have 
\[
\sum_{k \in \hbar \Z^n}\|\sigma_{\hbar}( k, \cdot)\|^{p'}_{L^{p'}(\T^n)}<\infty \Longrightarrow \text{Op}_{\hbar \Z^n}(\sigma_\hbar) \quad \text{is $p$-Schatten operator on} \quad \ell^2(\hbar \Z^n)\,.
\]
\end{rem}

\subsection{Weighted $\ell^2$-boundedness}
In this subsection, we present a result on the boundedness of semi-classical pseudo-differential operators on weighted $\ell^2(\hbar \Zn)$ spaces defined  below:
\begin{defn}{(Weighted $\ell^p_s(\hbar \Zn)$ space)}
 For $s\in \R$ and $1\leq p<\infty$ we define the \textit{weighted space $\ell^p_s(\hbar \Zn)$} as the space of all $f:\hbar \Zn\to\C$ such that 
   \begin{equation}\label{EQ:l2s}
 \|f\|_{\ell^p_s(\hbar \Z^n)}:=\left(\sum_{k\in \hbar \Z^n} (1+|k|)^{sp} |f( k)|^p\right)^{1/p}<\infty.
\end{equation} 
\end{defn}
It is easy to check that the symbol $a_{\hbar,s}(k)=(1+|k|)^{s}$ belongs to the semi-classical class of symbols $S^s_{1,0}(\hbar \Zn\times\Tn)$, while also that
  \[
  f\in \ell^p_s(\hbar \Z^n)\quad \text{if and only if}\quad  \Op(a_s) f\in \ell^p(\hbar \Z^n)\,.
  \]
  The latter observation gives rise to the following identification:
   \begin{equation}\label{EQ:l2s}
 \ell^p_s(\hbar \Zn)=\Op(a_{-s})(\ell^p(\hbar \Zn)).
\end{equation}

     \begin{cor}\label{COR:L2}
   Let $r\in\R$ and let $\sigma_\hbar\in S^r_{0,0}(\hbar \Zn\times\Tn)$. Then, the semi-classical pseudo-differential operator  $\Op(\sigma_\hbar)$ is a bounded  from  
   $\ell^2_{s}(\hbar \Zn)$ to $\ell^2_{s-r}(\hbar \Zn)$ for all $s\in\R$.
      \end{cor}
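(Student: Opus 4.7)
The plan is to reduce Corollary \ref{COR:L2} to the boundedness of a suitably conjugated operator on unweighted $\ell^2(\hbar\Z^n)$, and then to prove the latter by a direct kernel estimate combined with Schur's test. This route sidesteps the composition theorem (Theorem \ref{THM:comp}), whose standing hypothesis $\delta<\rho$ is violated here by $\rho=\delta=0$. Write $a_t(k):=(1+|k|)^t$; since $a_t$ depends only on $k$, the operator $\Op(a_t)$ is pointwise multiplication by $(1+|k|)^t$, and is therefore an isometric isomorphism $\ell^2_u(\hbar\Z^n)\to\ell^2_{u-t}(\hbar\Z^n)$ for every $u\in\R$.

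First I would peel off the order $r$ by setting $\sigma_0(k,\theta):=(1+|k|)^{-r}\sigma_\hbar(k,\theta)$; a use of the discrete Leibniz rule for the difference operators $\Delta^\alpha_\hbar$, together with the fact that the sequence $(1+|k|)^{-r}$ satisfies the natural lattice analogue of the $S^{-r}_{1,0}$ bounds, yields $\sigma_0\in S^0_{0,0}(\hbar\Z^n\times\T^n)$. The trivial factorisation $\Op(\sigma_\hbar)=\Op(a_r)\circ\Op(\sigma_0)$ (valid because $a_r$ is a function of $k$ alone) then reduces the claim to showing that $\Op(\sigma_0)$ maps $\ell^2_s$ into itself for every $s\in\R$. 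Using the identification $\ell^2_s=\Op(a_{-s})\bigl(\ell^2\bigr)$ given before the statement, this is in turn equivalent to the $\ell^2(\hbar\Z^n)$-boundedness of the conjugated operator
\[ T_s:=\Op(a_s)\circ\Op(\sigma_0)\circ\Op(a_{-s}). \]

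The Schwartz kernel of $T_s$ is simply $K_{T_s}(k,m)=(1+|k|)^s K_0(k,m)(1+|m|)^{-s}$, where $K_0$ denotes the kernel of $\Op(\sigma_0)$. By Theorem \ref{THM:kernel} applied with $\mu=\delta=0$, for every $Q\in\mathbb{N}_0$ one has
\[ |K_0(k,m)|\le C_Q\Bigl(1+\tfrac{1}{\hbar}|k-m|\Bigr)^{-2Q}, \]
and combining this with the Peetre-type inequality $(1+|k|)^s(1+|m|)^{-s}\le 2^{|s|}(1+|k-m|)^{|s|}$ gives
\[ |K_{T_s}(k,m)|\le C_{s,Q}(1+|k-m|)^{|s|}\Bigl(1+\tfrac{1}{\hbar}|k-m|\Bigr)^{-2Q}. \]
Choosing $Q$ so that $2Q>|s|+n$ and performing the change of index $k-m=\hbar\bar l$, $\bar l\in\Z^n$, both row and column sums of $|K_{T_s}|$ are finite (and uniformly bounded for $\hbar\in(0,1]$), so Schur's test yields the boundedness of $T_s$ on $\ell^2(\hbar\Z^n)$.

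The main technical step I expect to require care is the first reduction, namely verifying $\sigma_0=(1+|k|)^{-r}\sigma_\hbar\in S^0_{0,0}$: it needs the discrete Leibniz rule applied to $\Delta^\alpha_\hbar[(1+|k|)^{-r}\sigma_\hbar]$ and the observation that discrete differences of the weight $(1+|k|)^{-r}$ decay by the expected order. Everything downstream is a concrete kernel/Schur argument that avoids any composition calculus. Once this reduction is secured, the proof is robust and delivers bounds uniform in $\hbar\in(0,1]$.
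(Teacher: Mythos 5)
Your argument is correct, and it takes a genuinely different route from the paper. The paper also reduces to unweighted $\ell^2$ by conjugating with the weights, but it does so by invoking the composition formula of Theorem \ref{THM:comp} to conclude that $\Op(a_{\hbar,s-r})\circ\Op(\sigma_\hbar)\circ\Op(a_{\hbar,-s})$ lies in $\Op(S^0_{0,0})$, and then applies Theorem \ref{THM:L2} (hence the toroidal link of Theorem \ref{THM:link}) to get the $\ell^2$ bound; since Theorem \ref{THM:comp} is stated under $\delta<\rho$, which fails for $\rho=\delta=0$, the paper has to justify this step separately (it does so only in a footnote, using that the weights depend on $k$ alone). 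You avoid the composition calculus altogether: the factorisation $\Op(\sigma_\hbar)=\Op(a_r)\circ\Op(\sigma_0)$ and the conjugation $T_s=\Op(a_s)\circ\Op(\sigma_0)\circ\Op(a_{-s})$ are exact identities because the weights are multiplication operators, the membership $\sigma_0=(1+|k|)^{-r}\sigma_\hbar\in S^0_{0,0}$ follows from the discrete Leibniz rule exactly as you indicate (shifts by $\hbar v_j$ are harmless since $(1+|k+\hbar v_j|)\asymp(1+|k|)$), and the $\ell^2$ bound for $T_s$ comes from Theorem \ref{THM:kernel} with $\mu=\delta=0$, Peetre's inequality and Schur's test, with the lattice sum $\sum_{\bar l\in\Z^n}(1+\hbar|\bar l|)^{|s|}(1+|\bar l|)^{-2Q}$ convergent once $2Q>|s|+n$. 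What each buys: the paper's proof is shorter granted the calculus machinery, while yours is more elementary and self-contained and does not lean on a theorem used outside its stated hypotheses. One small caveat: your parenthetical claim of uniformity in $\hbar\in(0,1]$ holds only if the constants $C_Q$ in Theorem \ref{THM:kernel} (equivalently, the symbol seminorms of $\sigma_\hbar$, measured with the $\hbar$-free derivatives used in that proof) are themselves uniform in $\hbar$; this is not guaranteed by membership in $S^r_{0,0}$ for each fixed $\hbar$, but it does not affect the statement of the corollary, which is for fixed $\hbar$.
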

      \begin{proof}
  If $T=\Op(\sigma_\hbar)\in \Op(S^r_{0,0}(\hbar \Zn\times\Tn))$, then by using the composition formula as in Theorem \ref{THM:comp} we have 
  \[
  P=\Op(a_{\hbar,s-r})\circ T\circ \Op(a_{\hbar,-s}) \in \Op(S^0_{0,0}(\hbar \Zn\times\Tn))\footnote{We note that in this case, and since $a_{\hbar,s} \in S^s_{0,1}(\hbar \Zn \times \Tn)$ the asymptotic formula as in Theorem \ref{THM:comp} is well-defined.}
  \]
  
and the operator $P$ is by Theorem \ref{THM:L2} bounded on $\ell^2(\hbar \Zn)$.
  We can write
  \[
  Tf=\Op(a_{\hbar,r-s})\circ P \circ \Op(a_{\hbar,s})f\,,
  \]
  where $P$ is as above. Now, if $f\in \ell^2_{s}(\hbar \Zn)$, then, since $\Op(a_{\hbar,s})f, (P \circ \Op(a_{\hbar,s}))f \in \ell^2(\hbar \Zn)$, we also get that $Tf\in \Op(a_{\mu-s}) \ell^2(\hbar \Zn)$. The proof is now complete in view of the identification \eqref{EQ:l2s}.
\end{proof}

 \subsection{G{\aa}rding and sharp G{\aa}rding inequalities on $\hbar \mathbb Z^n$}
 
 \medskip
 
Let us recall the following result on the torus $\Tn$ as in \cite[Corollary 6.2]{Ruzhansky-Wirth:functional-calculus}:  
\begin{cor}{(G{\aa}rding inequality on $\Tn$)}\label{A}
Let $0\leq \delta < \rho \leq 1$ and $m > 0$. Let $ B \in \Op _{\mathbb{T}^n}{S}^{2m}_{\rho,\delta}(\mathbb{T}^n \times \Z^n)$ be an  elliptic toroidal pseudo-differential operator  such that $\sigma_B (\theta,\overline{k}) \geq  0$, for all $\theta \in \Tn$ and co-finitely many $\overline{k}\in \Zn$.
Then there exist  $C_{0},C_{1} > 0$ such that for all $f \in H^m(\mathbb{T}^n)$ we have
\begin{equation*}
{\rm Re} (Bf,f)_{L^2(\mathbb{T}^n)} \geq C_{0}||f||^{2}_{H^m(\mathbb{T}^n)}-C_{1}||f||^{2}_{L^2(\mathbb{T}^n)}.
\end{equation*}
\end{cor}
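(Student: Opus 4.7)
The plan is to prove Corollary~\ref{A} via the classical ``symbolic square root'' approach to G{\aa}rding inequalities, adapted to the toroidal symbolic calculus of \cite{RT-JFAA,ruzhansky2009pseudo}. First I would combine the ellipticity of $B$ of order $2m$ with the non-negativity of $\sigma_B$ to deduce that there exist $c_0>0$ and $M>0$ such that
\begin{equation*}
\sigma_B(\theta,\overline{k})\geq c_0(1+|\overline{k}|)^{2m},\qquad \text{for all } \theta\in\Tn,\ |\overline{k}|\geq M.
\end{equation*}
Indeed, ellipticity forces $|\sigma_B|\geq C(1+|\overline{k}|)^{2m}$ for $|\overline{k}|\geq M$, and together with $\sigma_B\geq 0$ for co-finitely many $\overline{k}$, one may enlarge $M$ so that the lower bound holds.

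Next I would construct an ``approximate square root'' symbol. Pick a cutoff $\chi(\overline{k})$ vanishing for $|\overline{k}|\leq M$ and equal to $1$ for $|\overline{k}|\geq M+1$, and set
\begin{equation*}
p(\theta,\overline{k}):=\chi(\overline{k})\sqrt{\sigma_B(\theta,\overline{k})}+\bigl(1-\chi(\overline{k})\bigr)(1+|\overline{k}|)^m.
\end{equation*}
Using a Fa\`a di Bruno type argument for toroidal derivatives $D_\theta^{(\beta)}$ and for the toroidal differences $\Delta^\alpha_{\overline{k}}$ acting on $\sqrt{\sigma_B}$, together with the uniform lower bound $\sigma_B\geq c_0(1+|\overline{k}|)^{2m}$ where $\chi=1$, one verifies that $p\in S^m_{\rho,\delta}(\Tn\times\Zn)$ and that $p$ is itself elliptic of order $m$. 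Moreover, $p^2-\sigma_B$ is supported in $|\overline{k}|\leq M+1$ and bounded there, hence belongs to $S^{-\infty}(\Tn\times\Zn)$.

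Set $P=\Op_{\Tn}(p)$. By the toroidal composition and adjoint formulas, $\sigma_{P^*P}=|p|^2+r$ with $r\in S^{2m-(\rho-\delta)}_{\rho,\delta}$, so that $R:=B-P^*P\in\Op_{\Tn}(S^{2m-(\rho-\delta)}_{\rho,\delta})$. Then I would decompose
\begin{equation*}
\mathrm{Re}\,(Bf,f)_{L^2(\Tn)}=\|Pf\|^2_{L^2(\Tn)}+\mathrm{Re}\,(Rf,f)_{L^2(\Tn)}.
\end{equation*}
Ellipticity of $P$ of order $m$, via a parametrix $Q\in\Op_{\Tn}(S^{-m}_{\rho,\delta})$ with $QP=I+S$, $S\in\Op_{\Tn}(S^{-\infty})$, yields $\|f\|_{H^m}\leq \|QPf\|_{H^m}+\|Sf\|_{H^m}\lesssim\|Pf\|_{L^2}+\|f\|_{L^2}$, i.e. $\|Pf\|^2_{L^2}\geq C'\|f\|^2_{H^m}-C''\|f\|^2_{L^2}$. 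On the other hand $|(Rf,f)|\lesssim\|f\|^2_{H^{m-(\rho-\delta)/2}}$, and interpolation on toroidal Sobolev spaces allows one to absorb this as $\varepsilon\|f\|^2_{H^m}+C_\varepsilon\|f\|^2_{L^2}$. Choosing $\varepsilon$ small enough completes the proof.

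The main obstacle I expect is the construction and symbol-class estimates for the square root $p$: one must control $D_\theta^{(\beta)}\Delta^\alpha_{\overline{k}}\sqrt{\sigma_B}$ for every multi-index $(\alpha,\beta)$, which requires a combinatorial expansion together with the lower bound on $\sigma_B$ to keep the order correct, and it is exactly at this step that the hypothesis $\delta<\rho$ is essential so that the error $R$ has strictly lower order than $B$ and can be absorbed by interpolation.
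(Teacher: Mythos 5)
Your argument is essentially correct, but note that the paper itself gives no proof of Corollary \ref{A}: it is recalled verbatim from \cite[Corollary 6.2]{Ruzhansky-Wirth:functional-calculus}, where the inequality is obtained on general compact Lie groups as a consequence of a global \emph{functional calculus} (the square root is taken at the operator level rather than at the symbol level). Your route is the classical H\"ormander-style symbolic square root adapted to the toroidal calculus: the lower bound $\sigma_B(\theta,\overline{k})\geq c_0(1+|\overline{k}|)^{2m}$ for large $|\overline{k}|$, the cutoff-corrected symbol $p$, the decomposition $B=P^*P+R$ with $R$ of order $2m-(\rho-\delta)$, the elliptic estimate for $P$ via a parametrix, and absorption of $R$ by interpolation; this is a legitimate, self-contained alternative that stays entirely within the toroidal calculus of \cite{RT-JFAA,ruzhansky2009pseudo}. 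Two points should be made explicit for the proof to be complete. First, the membership $\chi\sqrt{\sigma_B}\in S^m_{\rho,\delta}(\Tn\times\Zn)$ is not a literal Fa\`a di Bruno computation, since the lattice variable is acted on by difference operators rather than derivatives; it can be proved by induction using the discrete Leibniz rule together with identities such as $\Delta_{j}\sqrt{\sigma}(\theta,\overline{k})=\Delta_{j}\sigma(\theta,\overline{k})\big/\bigl(\sqrt{\sigma(\theta,\overline{k}+v_j)}+\sqrt{\sigma(\theta,\overline{k})}\bigr)$, or by passing to a Euclidean extension of the toroidal symbol. Second, the Sobolev bounds you invoke for $Q\in\Op_{\Tn}(S^{-m}_{\rho,\delta})$ and for $R$ require an $L^2$-boundedness theorem valid for general $(\rho,\delta)$ with $\delta<\rho$ (a Calder\'on--Vaillancourt-type result on $\Tn$), not merely the bounded-derivatives criterion, and this should be cited. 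With these two ingredients supplied, your proof stands; the functional-calculus approach of \cite{Ruzhansky-Wirth:functional-calculus} avoids the symbolic square-root estimates and yields the result on all compact Lie groups, while yours is more elementary and concrete in the specific toroidal setting used in this paper.
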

Let us now show the semi-classical analogue of  G{\aa}rding inequality on  $\hbar \Z^n$. As there is no regularity concept on the lattice, the statement is given in terms of weighted $\ell^2(\hbar \mathbb{Z}^n)$-spaces.

\begin{thm}[G{\aa}rding inequality on $\hbar \Z^n$]\label{THM:Garding}
Let $0\leq \delta < \rho \leq 1$ and $m > 0$. Let $ P \in \Op _{\hbar \Z^n} {S}^{2m}_{\rho,\delta}(\hbar \Z^n \times \mathbb{T}^n)$ be an elliptic semi-classical pseudi-differential operator  such that $\sigma_{\hbar,P} (k,\theta) \geq  0$ for all  $\theta$ and for co-finitely many $k \in \hbar \Zn$.
Then there exist  $C_{1},C_{2} > 0$ such that for all $g \in \ell^{2}_m(\mathbb{T}^n)$  we have
\begin{equation}\label{gard 1}
{\rm Re} (Pg,g)_{\ell^2(\hbar \Z^n)} \geq C_{0}||g||^{2}_{\ell^{2}_m(\hbar \Z^n)}-C_{1}||g||^{2}_{\ell^2(\hbar \Z^n)}.
\end{equation}
\end{thm}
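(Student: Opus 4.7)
The proof will reduce the claim to the toroidal G{\aa}rding inequality (Corollary \ref{A}) via the link between the toroidal and semi-classical lattice quantizations established in Theorem \ref{THM:link}. Define the associated toroidal symbol $\tau_\hbar(\theta,\overline{k}) := \overline{\sigma_{\hbar,P}(-\hbar\overline{k},\theta)}$, so that by \eqref{EQ:link1}
\[
P = \Op_{\hbar\Z^n}(\sigma_{\hbar,P}) = \mathcal{F}_{\hbar\Z^n}^{-1}\circ Q\circ \mathcal{F}_{\hbar\Z^n}, \qquad Q := \Op_{\T^n}(\tau_\hbar)^{*}.
\]
Together with the Plancherel formula \eqref{perrin} and the unitarity of $\mathcal{F}_{\hbar\Z^n}:\ell^2(\hbar\Z^n)\to L^2(\T^n)$, this identity converts the quadratic form $(Pg,g)_{\ell^2(\hbar\Z^n)}$ into $(Q\varphi,\varphi)_{L^2(\T^n)}$ where $\varphi := \mathcal{F}_{\hbar\Z^n}g$. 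The plan is then to apply the toroidal G{\aa}rding inequality to $Q$ and transfer the resulting estimate back via $\mathcal{F}_{\hbar\Z^n}^{-1}$.

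Next I need to verify that $Q$ satisfies the hypotheses of Corollary \ref{A}: that $\sigma_Q$ belongs to $S^{2m}_{\rho,\delta}(\T^n\times\Z^n)$, is elliptic of order $2m$, and is non-negative for co-finitely many $\overline{k}\in\Z^n$. Applying the toroidal adjoint formula (the analogue of Theorem \ref{THM:transpose}, cf.\ \cite[Chapter 4]{ruzhansky2009pseudo}), the symbol of $Q$ admits the asymptotic expansion $\sigma_Q\sim \tau_\hbar + r$ with $r\in S^{2m-(\rho-\delta)}_{\rho,\delta}(\T^n\times\Z^n)$. Since $\sigma_{\hbar,P}(k,\theta)\geq 0$ for co-finitely many $k\in\hbar\Z^n$, the principal term $\tau_\hbar(\theta,\overline{k}) = \sigma_{\hbar,P}(-\hbar\overline{k},\theta)$ is real and non-negative for co-finitely many $\overline{k}$; combined with the ellipticity $\sigma_{\hbar,P}(k,\theta)\geq C(1+|k|)^{2m}$ (valid for large $|k|$, since $\sigma_{\hbar,P}\geq 0$ there), one obtains $\tau_\hbar(\theta,\overline{k})\geq C_\hbar (1+|\overline{k}|)^{2m}$ for $|\overline{k}|$ large, which dominates the remainder $r$. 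Hence $\sigma_Q\geq 0$ for co-finitely many $\overline{k}$, and $Q$ is elliptic of order $2m$ in the toroidal sense.

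Applying Corollary \ref{A} now yields
\[
{\rm Re}(Q\varphi,\varphi)_{L^2(\T^n)} \geq C_0\|\varphi\|^2_{H^m(\T^n)} - C_1\|\varphi\|^2_{L^2(\T^n)}
\]
for $\varphi = \mathcal{F}_{\hbar\Z^n}g$. Using \eqref{RelationFtFz} we have $\mathcal{F}_{\T^n}\varphi(\overline{k}) = g(-\hbar\overline{k})$, so
\[
\|\varphi\|^2_{H^m(\T^n)} = \sum_{\overline{k}\in\Z^n}(1+|\overline{k}|^2)^m |g(-\hbar\overline{k})|^2;
\]
since $|\overline{k}| = |k|/\hbar \geq |k|$ for $\hbar\in (0,1]$, we obtain $(1+|\overline{k}|^2)^m \geq c(1+|k|)^{2m}$, so $\|\varphi\|_{H^m(\T^n)}\geq c\|g\|_{\ell^2_m(\hbar\Z^n)}$. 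Combined with $\|\varphi\|_{L^2(\T^n)} = \|g\|_{\ell^2(\hbar\Z^n)}$ (Plancherel) and ${\rm Re}(Q\varphi,\varphi)_{L^2(\T^n)} = {\rm Re}(Pg,g)_{\ell^2(\hbar\Z^n)}$, this delivers \eqref{gard 1} with appropriately adjusted constants $C_0,C_1>0$. The main technical obstacle is the verification in the second paragraph: controlling the $\hbar$-dependence in the transferred ellipticity and checking that the lower-order remainder produced by the toroidal adjoint asymptotic expansion is indeed absorbed by the principal symbol $\tau_\hbar$ uniformly for large $\overline{k}$, so that Corollary \ref{A} is genuinely applicable to $Q$.
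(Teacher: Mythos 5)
Your proposal is correct and follows essentially the same route as the paper: transfer the quadratic form to the torus via Theorem \ref{THM:link} and Plancherel, apply the toroidal G{\aa}rding inequality of Corollary \ref{A}, and pull the estimate back to the weighted lattice norms. The only difference is that the paper short-circuits your second paragraph by observing ${\rm Re}(\Op_{\Tn}(\tau_\hbar)^*f,f)_{L^2(\Tn)}={\rm Re}(\Op_{\Tn}(\tau_\hbar)f,f)_{L^2(\Tn)}$, so Corollary \ref{A} is applied to $\Op_{\Tn}(\tau_\hbar)$ itself, whose symbol is nonnegative and elliptic directly by hypothesis, making your adjoint-symbol expansion and the absorption of the lower-order remainder unnecessary (though your version of that verification is also valid since $0\leq\delta<\rho\leq 1$).
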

\begin{proof}
Let us define $\tau_\hbar(\theta,\overline{k}) =  \overline{\sigma_{\hbar,P}(- k,\theta) } $, where $k=\hbar \overline{k}$.
Then, using  Theorem \ref{THM:link} we have
\begin{equation}\label{gard 2}
P = \Op_{\hbar \Z^n}(\sigma_{\hbar,P}) = \mathcal{F}^{-1}_{\hbar \Z^n} \circ \Op _{\mathbb{T}^n}(\tau_\hbar)^* \circ \mathcal{F}_{\hbar \Z^n}\,.
\end{equation} 
The latter implies that if  $\sigma_{\hbar,P}\geq 0$ is elliptic on $\mathbb{T}^n$,  then also $\tau_\hbar\geq 0 $ is elliptic on  $\hbar \Z^n$. Hence, using the G{\aa}rding inequality on $\Tn$, see Corollary \ref{A}, we get that for all $f \in H^m(\mathbb{T}^n)$ 
\begin{eqnarray}\label{gard 3}
    {\rm Re}( \Op_{\mathbb{T}^n}(\tau_\hbar)^* f,f)_{L^2(\mathbb{T}^n)} & = & {\rm Re}( \Op_{\mathbb{T}^n}(\tau_\hbar)f,f)_{L^2(\mathbb{T}^n)} \nonumber \\
    & \geq & C_{0}||f||^{2}_{H^{m}(\mathbb{T}^n)}-C_{1}||f||^{2}_{L^2(\mathbb{T}^n)} \nonumber \\
    & = & C_{0}||g||^{2}_{\ell^{2}_m(\hbar \Z^n)}-C_{1}||g||^{2}_{\ell^2(\hbar \Z^n)}\,,
\end{eqnarray}
 where $f$ is such that $f = \mathcal{F}_{\hbar \Z^n}g$, so that
\begin{equation*}
\| g\|_{ H^m (\mathbb{T}^n)} = \| g\|_{\ell^{2}_m(\hbar \Z^n)} \qquad \text{and} \qquad \| g\|_{ L^2 (\mathbb{T}^n)} = \|g\|_{\ell^{2}(\hbar \Z^n)}.
\end{equation*} 
Now, by \eqref{gard 2} we can write
\begin{equation}\label{gard 41}
Pg = \mathcal{F}^{-1}_{\hbar \Z^n} \circ \Op _{\mathbb{T}^n}(\tau_\hbar)^* \circ \mathcal{F}_{\hbar \Z^n} g =  \mathcal{F}^{-1}_{\hbar \Z^n} \circ \Op _{\mathbb{T}^n}(\tau_\hbar)^* f,
\end{equation} 
so that $ \mathcal{F}_{\hbar \Z^n}Pg =   \Op _{\mathbb{T}^n}(\tau_\hbar)^* f $. 
Thus, by \eqref{gard 3} we have
\[
{\rm Re}( \mathcal{F}_{\hbar \Z^n} Pg, \mathcal{F}_{\hbar \Zn}g)_{L^2(\Tn)} \geq  C_{0}||f||^{2}_{\ell^{2}_m(\hbar \Z^n)}-C_{1}||f||^{2}_{\ell^2(\hbar \Z^n)}\,,
\]
where the last can be rewritten as
\[
{\rm Re}(\mathcal{F}_{\hbar \Z^n}^* \mathcal{F}_{\hbar \Z^n} Pg,  g)_{L^2(\mathbb{T}^n)} \geq  C_{0}||f||^{2}_{\ell^{2}_m(\hbar \Z^n)}-C_{1}||f||^{2}_{\ell^2(\hbar \Z^n)}\,.
\]
Since $\mathcal{F}_{\hbar \Z^n}^* \mathcal{F}_{\hbar \Z^n} = Id$, we obtain
\[
{\rm Re}( Pg,g)_{\ell^2(\hbar \Z^n)}  \geq    C_{0}||g||^{2}_{\ell^{2}_m(\hbar \Z^n)}-C_{1}||g||^{2}_{\ell^2(\hbar \Z^n)}\,.
\]
This completes the proof of Theorem \ref{THM:Garding}.
\end{proof}
Next we show the sharp G{\aa}rding inequality on $\hbar \Z^n$. Before doing so, let us recall how the sharp G{\aa}rding inequality on compact Lie groups, see \cite[Theorem 2.1]{ruzhansky2011sharp}, reads in the case of the torus $\Tn$.

\begin{thm}[Sharp G{\aa}rding inequality on $\mathbb{T}^n$]\label{THM:sG}
Let $B \in \Op_{\mathbb{T}^n}{S}^m(\mathbb{T}^n \times  \Z^n)$ be a toroidal pseudo-differential operator with  symbol ${\sigma_\hbar}(\theta,\overline{k}) \geq 0$ for all $(\theta,\overline{k}) \in \mathbb{T}^n \times \Z^n$. Then there exists $C < \infty$ such that
\[{\rm Re}( Bg,g)_{L^2(\mathbb{T}^n)} \geq -C\|g\|_{H^{\frac{m-1}{2}}(\mathbb{T}^n)},\]
for all $g \in H^{\frac{m-1}{2}}(\mathbb{T}^n)$.
\end{thm}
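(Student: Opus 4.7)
The plan is to adapt Hörmander's classical Friedrichs symmetrization to the toroidal setting. The core aim is to produce a decomposition $B = B_F + R$ in which $B_F$ is manifestly nonnegative as an operator on $L^2(\mathbb{T}^n)$ and $R \in \Op_{\mathbb{T}^n}(S^{m-1}(\mathbb{T}^n \times \mathbb{Z}^n))$. With such a decomposition in hand, the Sobolev continuity of toroidal pseudo-differential operators of order $m-1$ (obtained by composing with Bessel potentials and applying the $L^2$-boundedness theorem on $\mathbb{T}^n$, see \cite{RT-JFAA}) gives
\begin{equation*}
|(Rg,g)_{L^2(\mathbb{T}^n)}| \leq C\|g\|_{H^{(m-1)/2}(\mathbb{T}^n)}^{2},
\end{equation*}
which, combined with $(B_F g, g)_{L^2(\mathbb{T}^n)} \geq 0$, instantly delivers the claimed lower bound on $\mathrm{Re}(Bg,g)_{L^2(\mathbb{T}^n)}$.

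To construct $B_F$ I would set $B_F = A^{*}A$ where $A$ is a quantization of a smoothed square root of $\sigma_\hbar$. Concretely, fix a nonnegative symmetric $\Phi \in C_c^{\infty}(\mathbb{R}^n)$ with $\int \Phi^{2}\,d\eta = 1$ and $\Phi(-\eta)=\Phi(\eta)$, and define a toroidal symbol $a(\theta, \overline{k})$ of order $m/2$ by averaging $\sqrt{\sigma_\hbar(\theta,\cdot)}$ against $\Phi^{2}((\cdot - \overline{k})/\langle \overline{k}\rangle^{1/2})$ over the frequency lattice, with a suitable normalization. Setting $A = \Op_{\mathbb{T}^n}(a)$ makes $B_F = A^{*}A$ manifestly nonnegative, and the toroidal composition and adjoint formulae identify the principal symbol of $B_F$ with $\sigma_\hbar$ up to a remainder of lower order. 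The nonnegativity of $\sigma_\hbar$ is used precisely to guarantee that the square root $\sqrt{\sigma_\hbar}$ exists and can be smoothed into an actual symbol in $S^{m/2}$.

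The main obstacle is to show that the remainder $R = B - B_F$ sits in $S^{m-1}$ rather than merely in $S^{m-1/2}$; this half-order gain is the whole content of the \emph{sharp} Gårding inequality (as opposed to the ordinary Gårding inequality of Corollary \ref{A}). The improvement stems from a cancellation in the asymptotic expansion for the symbol of $A^{*}A$: the leading error term is odd in the frequency shift and therefore vanishes after integration against $\Phi^{2}$, leaving a genuinely second-order remainder in the difference operators $\Delta^\alpha$ on $\mathbb{Z}^n$. Making this cancellation rigorous requires a careful Taylor expansion of $\sigma_\hbar(\theta,\overline{k}+\overline{j})$ in the discrete frequency variable $\overline{j}$, exactly along the lines of \cite[Ch.~4]{ruzhansky2009pseudo}, and is the technical heart of the argument. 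An alternative and quicker route, consistent with the way the theorem is stated here, is to simply specialise the compact Lie group result of \cite{ruzhansky2011sharp} to $\mathbb{T}^n$, where the matrix-valued symbolic calculus collapses to the scalar calculus because every irreducible unitary representation of $\mathbb{T}^n$ is one-dimensional.
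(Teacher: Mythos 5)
You should first know how the paper itself treats this statement: it does not prove Theorem \ref{THM:sG} at all. The theorem is \emph{recalled} as the specialisation to $\mathbb{T}^n$ of the sharp G{\aa}rding inequality on compact Lie groups, \cite[Theorem 2.1]{ruzhansky2011sharp}, and is then used as a black box (via the link of Theorem \ref{THM:link}) to obtain the lattice version in Theorem \ref{THM:Sgard}. So the ``alternative and quicker route'' in your last sentence — specialise \cite{ruzhansky2011sharp} to the torus, where every irreducible representation is one-dimensional and the matrix calculus collapses to the scalar toroidal calculus of \cite{RT-JFAA} — is not an alternative here; it is exactly the paper's argument, and it is the only complete argument in your proposal as well.

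Your primary route, a toroidal Friedrichs symmetrization, is a genuinely different strategy, but as written it has a concrete gap at the step you yourself call the technical heart. The construction $B_F=A^{*}A$ with $A$ a quantization of a mollified $\sqrt{\sigma_\hbar}$ is not the standard Friedrichs construction and is the weak point: for a nonnegative symbol of order $m$ the square root is in general not a symbol — Glaeser's inequality controls only first derivatives, giving roughly an $S^{m/2}_{1/2,1/2}$-type bound, and second and higher derivatives are uncontrolled at the zeros of $\sigma_\hbar$ — and after averaging against $\Phi^{2}$ one must still prove both that $a\in S^{m/2}$ with genuine symbol estimates and that $a^{2}$ (and then the full composition expansion of $A^{*}A$, where a $\rho=\delta=1/2$ calculus gains nothing) differs from $\sigma_\hbar$ by an $S^{m-1}$ symbol; that last claim is essentially equivalent to the sharp G{\aa}rding inequality itself, so asserting it is circular. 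The classical Friedrichs symmetrization avoids square roots entirely: one sandwiches $\sigma_\hbar$ itself between two copies of the smoothing kernel, so that nonnegativity of $B_F$ follows from a quadratic-form identity $(B_F g,g)=\int \sigma_\hbar(\cdot,\eta)\,|Q_\eta g|^{2}$-type computation, and the gain of one full order comes from the parity cancellation you mention, applied to $\sigma_\hbar$ rather than to its square root; carrying this out with the discrete frequency variable (Taylor expansion in $\overline{k}$ with the difference operators, as in \cite[Ch.~4]{ruzhansky2009pseudo}) is nontrivial and is precisely what you have not done. As it stands, the decomposition $B=B_F+R$ with $R\in S^{m-1}$ is announced rather than established, so either the symmetrization must be reworked along the standard lines sketched above, or one should simply cite \cite{ruzhansky2011sharp}, as the paper does.
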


Let us know prove the analogous result in the semi-classical setting $\hbar \Zn$.
\begin{thm}[Sharp G\"{a}rding inequality on $\hbar \Z^n$]\label{THM:Sgard}
Let $P \in \Op_{\hbar \Z^n}{S}^m(\hbar \Z^n \times \mathbb{T}^n)$ be a semi-classical pseudo-diffrential operator with symbol $\sigma_{\hbar,P}( k,\theta) \geq 0$ for all $(k,\theta) \in  \hbar \Z^n \times \mathbb{T}^n$. Then there exists $C < \infty$ such that  
\[{\rm Re}( Pg,g)_{\ell^2(\hbar \Z^n)} \geq -C\|g\|_{\ell^{2}_{\frac{m-1}{2}}(\hbar \Z^n)}\]
for all $g \in \ell^{2}_{\frac{m-1}{2}}(\hbar \Z^n)$.
\end{thm}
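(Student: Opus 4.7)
The plan is to transfer the sharp G{\aa}rding inequality from the torus $\mathbb{T}^n$ to the lattice $\hbar\mathbb{Z}^n$, following exactly the transfer strategy already used in the proof of Theorem \ref{THM:Garding}. First, I associate to $P$ the toroidal symbol $\tau_\hbar(\theta,\overline{k}) := \overline{\sigma_{\hbar,P}(-k,\theta)}$, where $k=\hbar\overline{k}$, so that by Theorem \ref{THM:link} we have
\[
P = \Op_{\hbar\mathbb{Z}^n}(\sigma_{\hbar,P}) = \mathcal{F}_{\hbar\mathbb{Z}^n}^{-1}\circ \Op_{\mathbb{T}^n}(\tau_\hbar)^{*}\circ \mathcal{F}_{\hbar\mathbb{Z}^n}.
\]
Since $\sigma_{\hbar,P}(k,\theta)\ge 0$ everywhere on $\hbar\mathbb{Z}^n\times\mathbb{T}^n$, the values are real, so $\tau_\hbar(\theta,\overline{k}) = \sigma_{\hbar,P}(-k,\theta)\ge 0$ on $\mathbb{T}^n\times\mathbb{Z}^n$, and $\tau_\hbar\in S^m(\mathbb{T}^n\times\mathbb{Z}^n)$ by the correspondence of symbol classes under swapping variables.

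Next, I apply the sharp G{\aa}rding inequality on the torus, Theorem \ref{THM:sG}, to $\Op_{\mathbb{T}^n}(\tau_\hbar)$: there exists $C<\infty$ such that for every $f\in H^{(m-1)/2}(\mathbb{T}^n)$,
\[
{\rm Re}\,(\Op_{\mathbb{T}^n}(\tau_\hbar)f,f)_{L^2(\mathbb{T}^n)} \ge -C\|f\|^{2}_{H^{(m-1)/2}(\mathbb{T}^n)}.
\]
A direct computation using the polarization identity shows that for any bounded operator $A$ one has ${\rm Re}\,(A^{*}f,f) = {\rm Re}\,\overline{(Af,f)} = {\rm Re}\,(Af,f)$, so the same lower bound holds for the adjoint $\Op_{\mathbb{T}^n}(\tau_\hbar)^{*}$.

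Finally, I specialise to $f=\mathcal{F}_{\hbar\mathbb{Z}^n}g$ for $g\in\ell^2_{(m-1)/2}(\hbar\mathbb{Z}^n)$ and use the isometric identifications from the proof of Theorem \ref{THM:Garding}, namely $\|f\|_{L^2(\mathbb{T}^n)}=\|g\|_{\ell^2(\hbar\mathbb{Z}^n)}$ and $\|f\|_{H^{(m-1)/2}(\mathbb{T}^n)}=\|g\|_{\ell^2_{(m-1)/2}(\hbar\mathbb{Z}^n)}$, together with $\mathcal{F}_{\hbar\mathbb{Z}^n}Pg = \Op_{\mathbb{T}^n}(\tau_\hbar)^{*}f$ (from \eqref{gard 41}) and the unitarity of $\mathcal{F}_{\hbar\mathbb{Z}^n}$. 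This converts the inequality on the torus into
\[
{\rm Re}\,(Pg,g)_{\ell^2(\hbar\mathbb{Z}^n)} \ge -C\|g\|^{2}_{\ell^2_{(m-1)/2}(\hbar\mathbb{Z}^n)},
\]
which is the desired estimate.

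The only non-routine point is verifying that the symbol $\tau_\hbar$ genuinely lies in $S^m(\mathbb{T}^n\times\mathbb{Z}^n)$ with the required non-negativity (both are immediate from the construction), and that the Sobolev/weighted-$\ell^2$ identification used in Theorem \ref{THM:Garding} still applies at the fractional exponent $(m-1)/2$; this is automatic since the identification is defined Fourier-analytically and depends continuously on the smoothness index, so no extra work is required beyond what was already done for integer $m$.
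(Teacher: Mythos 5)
Your proof is correct and follows essentially the same route as the paper: conjugation by the semi-classical Fourier transform via Theorem \ref{THM:link}, passing to the adjoint of the toroidal operator (using $\mathrm{Re}\,(A^{*}f,f)=\mathrm{Re}\,(Af,f)$), applying the toroidal sharp G{\aa}rding inequality of Theorem \ref{THM:sG}, and using the isometric identification of $H^{(m-1)/2}(\mathbb{T}^n)$ with $\ell^{2}_{(m-1)/2}(\hbar\mathbb{Z}^n)$. The only cosmetic difference is that you write the lower bounds with squared norms (the standard form of sharp G{\aa}rding), whereas the statements of Theorems \ref{THM:sG} and \ref{THM:Sgard} in the paper omit the square.
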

\begin{proof}
Let $\tau_\hbar(\theta,\overline{k}) =  \overline{\sigma_{\hbar,P}(- k,\theta) } $, where $k=\hbar \overline{k}$.
Using Theorem \ref{THM:link} we can write 
\begin{equation}\label{gard 2}
P = \Op_{\hbar \Z^n}(\sigma_{\hbar,P}) = \mathcal{F}^{-1}_{\hbar \Z^n} \circ \Op _{\mathbb{T}^n}(\tau_\hbar)^* \circ \mathcal{F}_{\hbar \Z^n}.
\end{equation}
Following the lines of Theorem \ref{THM:Garding} and using the Sharp G{\aa}rding inequality on $\mathbb{T}^n$ we get 
\begin{eqnarray*}
{\rm Re}(Pg,g)_{\ell^2(\hbar \Z^n)} &=&{\rm Re}( \mathcal{F}^{-1}_{\hbar \Z^n}  \Op_{\mathbb{T}^n}(\tau_\hbar)^* f, \mathcal{F}^{-1}_{\hbar \Z^n} f)_{\ell^2(\hbar \Z^n)}\\
&=& {\rm Re}( \Op_{\mathbb{T}^n}(\tau_\hbar)^* f,f)_{L^2(\mathbb{T}^n)}\\
&=& {\rm Re}( \Op_{\mathbb{T}^n}(\tau_\hbar) f,f)_{L^2(\mathbb{T}^n)}  \\
&\geq & -C\|f\|_{H^{\frac{m-1}{2}}(\mathbb{T}^n)}\\
&=& -C\|g\|_{\ell^{2}_{\frac{m-1}{2}}(\hbar \Z^n)}.
\end{eqnarray*}
The proof of Theorem \ref{THM:Sgard} is now complete.
\end{proof}

 \subsection{Existence and uniqueness of the solutions to parabolic equations on $\hbar \Zn$}
 
In this subsection we will apply the G{\aa}rding inequalities in our semi-classical setting to prove the well-posedeness of the classical parabolic equation
\begin{equation}\label{42}
\begin{cases}
\frac{\partial w}{\partial t} - Dw &= g, \qquad t\in [0,T],\quad T>0,\\
w(0)  &= w_0\,,
\end{cases}
\end{equation}  
where in this case the classical differential operator is replaced by a semi-classical pseudo-differential operator denoted by $D$ and has a symbol in the class ${S}^{m}_{1,0}(\hbar \Z^n \times \mathbb{T}^n)$.
\begin{thm}\label{THM:parabolic}
Let $r>0$ and $D \in \Op_{\hbar \Z^n}{S}^{r}_{1,0}(\hbar \Z^n \times \mathbb{T}^n)$ be a semi-classical pseudo-differential operator.
Assume also that  there exist $C_{0} >0$ and  $R>0$ such that for all $\theta\in\mathbb T^n$, we have
\begin{equation}\label{gard 5}
 -\sigma_{\hbar,D}( k,\theta) \geq C_{0}|k|^r  \qquad \text{for} \ |k| \geq R.
 \end{equation}
If for $w_0$ and $g$ as in \eqref{42}, we have $w_0\in \ell^2(\hbar \Z^n)$ and $g \in L^1([0,T],\ell^2(\hbar \Z^n))$, then the equation \eqref{42} has a unique solution $w \in C([0,T],\ell^2(\hbar \Z^n))$ that satisfies the estimate
\begin{equation}\label{gard 6}
\|u(t)\|^{2}_{\ell^2(\hbar \Z^n)} \leq C\Big(\|u_0\|^{2}_{\ell^2(\hbar \Z^n)} +  \int^{t}_{0} \|f(s)\|^{2}_{\ell^2(\hbar \Z^n)} d s\Big)\,,
 \end{equation}
for some $C>0$ and for all $t\in [0,T]$.

\end{thm}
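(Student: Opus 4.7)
My strategy is to apply the G\aa{}rding inequality of Theorem \ref{THM:Garding} to $-D$ in order to extract an energy estimate; this will immediately yield \eqref{gard 6} and uniqueness, while existence will be obtained by transferring the problem to the torus via Theorem \ref{THM:link}.

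Set $m := r/2 > 0$. Since $-D \in \Op_{\hbar \Z^n}(S^{2m}_{1,0}(\hbar \Z^n \times \T^n))$ and, by \eqref{gard 5}, its symbol satisfies $-\sigma_{\hbar, D}(k,\theta) \geq C_0 |k|^r \geq 0$ for $|k| \geq R$, the operator $-D$ is elliptic of order $2m$ with nonnegative symbol for co-finitely many $k$. Theorem \ref{THM:Garding} then supplies constants $\widetilde{C}_0, \widetilde{C}_1 > 0$ such that
\begin{equation*}
{\rm Re}(-Du, u)_{\ell^2(\hbar \Z^n)} \geq \widetilde{C}_0 \|u\|^2_{\ell^2_m(\hbar \Z^n)} - \widetilde{C}_1 \|u\|^2_{\ell^2(\hbar \Z^n)}, \qquad u \in \ell^2_m(\hbar \Z^n).
\end{equation*}
Pairing \eqref{42} with $w$ in $\ell^2(\hbar \Z^n)$ and taking real parts then produces
\begin{equation*}
\tfrac{1}{2} \tfrac{d}{dt} \|w(t)\|^2_{\ell^2(\hbar \Z^n)} = {\rm Re}(Dw, w) + {\rm Re}(g, w) \leq \widetilde{C}_1 \|w\|^2_{\ell^2(\hbar \Z^n)} + \|g\|_{\ell^2(\hbar \Z^n)} \|w\|_{\ell^2(\hbar \Z^n)},
\end{equation*}
where the G\aa{}rding contribution $-\widetilde{C}_0 \|w\|^2_{\ell^2_m}$ is nonpositive and has been discarded. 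Young's inequality applied to the last term followed by Gronwall's lemma delivers \eqref{gard 6}. Uniqueness is then immediate by applying this bound to the difference of two solutions, which solves the homogeneous version of \eqref{42}.

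For existence I would invoke Theorem \ref{THM:link}: with $\tau_\hbar(\theta, \overline{k}) := \overline{\sigma_{\hbar, D}(-\hbar \overline{k}, \theta)}$, $v := \mathcal{F}_{\hbar \Z^n} w$, $h := \mathcal{F}_{\hbar \Z^n} g$ and $v_0 := \mathcal{F}_{\hbar \Z^n} w_0$, the Cauchy problem \eqref{42} is unitarily equivalent to the abstract parabolic equation
\begin{equation*}
\partial_t v = \Op_{\T^n}(\tau_\hbar)^* v + h, \qquad v(0) = v_0,
\end{equation*}
on $L^2(\T^n)$. The principal symbol of $\Op_{\T^n}(\tau_\hbar)^*$ inherits the sign condition \eqref{gard 5} (the adjoint asymptotic contributes only lower-order terms), so the toroidal G\aa{}rding inequality of Corollary \ref{A} makes $\Op_{\T^n}(\tau_\hbar)^* - \lambda I$ dissipative on $L^2(\T^n)$ for $\lambda$ large enough. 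Combined with surjectivity of the resolvent, provided by the elliptic parametrix construction (the toroidal analogue of Theorem \ref{THM:elliptic}), Hille--Yosida then furnishes a strongly continuous semigroup on $L^2(\T^n)$ and hence a unique $v \in C([0,T], L^2(\T^n))$. Pulling back through the isometry $\mathcal{F}^{-1}_{\hbar \Z^n}: L^2(\T^n) \to \ell^2(\hbar \Z^n)$ yields $w \in C([0,T], \ell^2(\hbar \Z^n))$ solving \eqref{42}, which a posteriori satisfies \eqref{gard 6}. The delicate point is establishing the range condition $\textrm{Range}(\Op_{\T^n}(\tau_\hbar)^* - \lambda I) = L^2(\T^n)$ for $\lambda$ sufficiently large: this is standard from the parametrix together with a Fredholm alternative argument, but care is needed to ensure that the subprincipal terms arising in the adjoint expansion do not destroy the sign of the symbol at infinity.
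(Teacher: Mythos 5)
Your proof is essentially correct, and its core — the energy estimate and uniqueness — follows the same route as the paper: both pair the equation with $w$ in $\ell^2(\hbar\Z^n)$, control ${\rm Re}(Dw,w)$ from above by the G{\aa}rding inequality of Theorem \ref{THM:Garding} (you apply it directly to $-D$, the paper works with $-(D+D^*)$ and Theorem \ref{THM:adjoint}, but since ${\rm Re}(Dw,w)=\tfrac12((D+D^*)w,w)$ these are the same estimate), discard the coercive term $-\widetilde{C}_0\|w\|^2_{\ell^2_{r/2}}$, and conclude via Young plus Gronwall, with uniqueness obtained by applying the bound to the difference of two solutions. Where you genuinely diverge is existence: the paper disposes of it in one line by appealing to ``a modification of the standard Picard theorem'' (i.e.\ an iteration/fixed-point argument on $C([0,T],\ell^2(\hbar\Z^n))$), whereas you transfer the problem to $L^2(\T^n)$ via Theorem \ref{THM:link} and build a $C_0$-semigroup for $\Op_{\T^n}(\tau_\hbar)^*$ through dissipativity (toroidal G{\aa}rding, Corollary \ref{A}) plus a parametrix/Fredholm argument for the range condition, then invoke Hille--Yosida. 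Your route is heavier but more explicit about where the hypotheses enter (ellipticity for the resolvent surjectivity, sign condition for dissipativity, and the fact that the adjoint expansion only perturbs the symbol by terms in $S^{r-1}$, which cannot destroy \eqref{gard 5} at infinity); the paper's route is shorter but leaves all of this implicit. The only caveats, which affect the paper equally, are the formal nature of the energy identity (one needs $w(t)\in\ell^2_{r/2}(\hbar\Z^n)$, or an approximation argument, for the pairing with $Dw$ to be rigorous) and the fact that your range-condition step is sketched rather than carried out; neither is a gap relative to the level of detail of the paper's own proof.
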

\begin{proof}
Let $w$ be the solution to the equation \eqref{42}. If $\sigma_{\hbar,D}^*( k,\theta)$ stands for the symbol of the adjoint operator $D^*$, then by condition \eqref{gard 5} there exists $C'_{0 }>0$ such that
\begin{equation*}
 |-(\sigma_{\hbar,D} + \sigma_{\hbar,D}^*)(k,\theta)| \geq C'_{0 }|k|^r  \qquad \text{for} \ |k| \geq R.
 \end{equation*}
 Then, using the the G{\aa}rding inequality as in Theorem \ref{THM:Garding} and Theorem \ref{THM:adjoint} on the adjoint operators, we get 
 \begin{equation}\label{gard 7}
 -\Big( (D+D^*)w,w\Big)_{\ell^2(\hbar \Z^n)} \geq C_{1}\|w\|^{2}_{\ell^{2}_{\frac{r}{2}}(\hbar \Z^n)}-C_{2}\|w\|^{2}_{\ell^2(\hbar \Z^n)}.
 \end{equation}
 
 On the other hand we have 
 \begin{eqnarray}\label{withD}
 \frac{\partial }{\partial t} \|w\|^{2}_{\ell^2(\hbar \Z^n)} &=&  \frac{\partial }{\partial t}\bigg(w(t),w(t)\bigg)_{\ell^2(\hbar \Z^n)}=\bigg( \frac{\partial w}{\partial t},w\bigg)_{\ell^2(\hbar \Z^n)} + \bigg(w, \frac{\partial w}{\partial t}\bigg)_{\ell^2(\hbar \Z^n)}\nonumber\\
&=& \bigg(Dw +g,w \bigg)_{\ell^2(\hbar \Z^n)} + \bigg(w,Dw +g \bigg)_{\ell^2(\hbar \Z^n)}\nonumber\\
&=& \bigg((D+D^*)w ,w \bigg)_{\ell^2(\hbar \Z^n)} + 2{\rm Re}(w,g)_{\ell^2(\hbar \Z^n)}. 
\end{eqnarray}
Hence a combination of \eqref{gard 7} together with \eqref{withD} gives
\begin{eqnarray*}
   \frac{\partial }{\partial t} \|w\|^{2}_{\ell^2(\hbar \Z^n)} & \leq &  -C_{1}\|w(t)\|^{2}_{\ell^{2}_{\frac{r}{2}}(\hbar \Z^n)}+C_{2}\|w(t)\|^{2}_{\ell^2(\hbar \Z^n)} + \|w(t)\|^{2}_{\ell^2(\hbar \Z^n)} + ||g||^{2}_{\ell^2(\hbar \Z^n)}\\
   & \leq & (C_{2} +1)\|w(t)\|^{2}_{\ell^2(\hbar \Z^n)} + \|g\|^{2}_{\ell^2(\hbar \Z^n)}\,.
\end{eqnarray*}
 An application of Gronwall's lemma to the latter gives 
 \begin{equation*}
\|w(t)\|^{2}_{\ell^2(\hbar \Z^n)} \leq C_{\hbar}\Big(\|w_0\|^{2}_{\ell^2(\hbar \Z^n)} +  \int^{T}_{0} \|f(s)\|^{2}_{\ell^2(\hbar \Z^n)} d s\Big)\,,
 \end{equation*}
 and we have proved \eqref{gard 6}.
 
 The existence of a solution  $w \in C_{\hbar}([0,T],\ell^2(\hbar \Z^n))$ to the equation \eqref{42} follows by a modification of the standard Picard's theorem.
 
 To prove the well-posedeness, let $w,v$ be two solutions of \eqref{42}. Then by setting $u:= w-v$, we have
\begin{equation*}
 \begin{cases}
\frac{\partial u}{\partial t} - Du &= 0, \; t\in [0,T],\\
u(0) &= 0.
\end{cases}
 \end{equation*} 
Now, the estimate \eqref{gard 6} implies that $\|w(t)\|_{\ell^2(\hbar \Z^n)} = 0$, which in turn gives that $w(t)=v(t)$ for all $t \in [0,T]$, completing the proof.
\end{proof}

\subsection{Boundedness and compactness on $\ell^p(\hbar \Zn)$}

   
   
   \ 
\ 
   In the result that follows we show the $\ell^p(\hbar \Zn)$-boundedness of a semi-classical pseudo-differential operator. Here the bound of the operator norm depends on the bound of the operator symbol which does not necessarily needs to be regular or obey a decay condition. An analogous result for pseudo-differential operators on the lattice $\Zn$ is established in \cite[Proposition 5.12]{BKR20}, while for the special case where $n=1$ in \cite{molahajloo2009pseudo}. 
   
   \begin{proposition}\label{compact} Let $1\leq p< \infty.$ 
   Let also $\sigma_\hbar:\hbar \Z^n\times \mathbb{T}^n\to\C$ be a measurable function such that
   \[|(\mathcal{F}_{\mathbb T^n}\sigma_\hbar)( k, m)| \leq C_|\lambda(m)|, \quad \textrm{ for all }\; k,m \in \hbar \Z^n,\]
   where $C>0$ is a positive constant, $\lambda$ is some function on $\hbar \Zn$  such that $\lambda\in \ell^{1}(\hbar \Z^n)$ and $\mathcal{F}_{\mathbb T^n}\sigma_\hbar$ is the Fourier transform of $\sigma_\hbar$ in the second variable.
   Then, $\Op_{\hbar \mathbb{Z}^n}(\sigma_\hbar): \ell^{p}(\hbar \Z^n)\rightarrow \ell^{p}(\hbar \Z^n)$ is a bounded linear operator and its norm is bounded from above. Particularly we have
   \[\|\Op(\sigma_\hbar)\|_{\mathscr{L}(\ell^{p}(\hbar \Z^n))} \leq C\|\lambda\|_{\ell^{1}(\hbar \Z^n)}. \]
    \end{proposition}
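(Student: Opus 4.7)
The plan is to reduce the boundedness of $\Op_\hbar(\sigma_\hbar)$ on $\ell^p(\hbar\Z^n)$ to an application of Young's convolution inequality on the discrete abelian group $\hbar\Z^n$, via the kernel representation developed in Section \ref{SEC:symbols}. The key observation is that the hypothesis $|(\mathcal{F}_{\T^n}\sigma_\hbar)(k,m)|\leq C|\lambda(m)|$ is exactly what is needed to dominate the kernel of $\Op_\hbar(\sigma_\hbar)$ by a translation of the $\ell^1$ function $\lambda$.

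First, I would invoke \eqref{EQ:kernels} to write
\[
\Op_\hbar(\sigma_\hbar)f(k)=\sum_{m\in\hbar\Z^n}K(k,m)\,f(m),\qquad K(k,m)=\kappa(k,k-m),
\]
where $\kappa(k,l)=\int_{\T^n}e^{2\pi\frac{i}{\hbar}l\cdot\theta}\sigma_\hbar(k,\theta)\,d\theta$. By the definition of the inverse semi-classical Fourier transform \eqref{EQ:Finv} and the relation \eqref{RelationFtFz} between the toroidal and semi-classical Fourier transforms, $\kappa(k,\cdot)$ coincides (up to a sign in the argument) with the toroidal Fourier transform $\mathcal{F}_{\T^n}\sigma_\hbar(k,\cdot)$ evaluated on the corresponding lattice point. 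Therefore the assumption of the proposition translates directly into a pointwise bound $|K(k,m)|\leq C|\lambda(m-k)|$ for all $k,m\in\hbar\Z^n$ (after the natural identification of indices).

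With this kernel bound in hand, I would estimate
\[
|\Op_\hbar(\sigma_\hbar)f(k)|\leq C\sum_{m\in\hbar\Z^n}|\lambda(m-k)|\,|f(m)|=C\,(\tilde\lambda *|f|)(k),
\]
where $\tilde\lambda(x):=\lambda(-x)$, and convolution is taken with respect to the discrete group $\hbar\Z^n$ (with counting measure). An application of Young's convolution inequality for discrete abelian groups with exponents $(1,p,p)$ then yields
\[
\|\Op_\hbar(\sigma_\hbar)f\|_{\ell^p(\hbar\Z^n)}\leq C\,\|\tilde\lambda\|_{\ell^1(\hbar\Z^n)}\|f\|_{\ell^p(\hbar\Z^n)}=C\,\|\lambda\|_{\ell^1(\hbar\Z^n)}\|f\|_{\ell^p(\hbar\Z^n)},
\]
which is the desired norm estimate.

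The main obstacle is the bookkeeping between the three Fourier transforms in play (semi-classical $\mathcal{F}_{\hbar\Z^n}$, its inverse, and the toroidal $\mathcal{F}_{\T^n}$) together with the rescaling by $\hbar$: one must carefully track the signs and the factor $\hbar$ in the duality $k=\hbar\overline{k}$ so that the hypothesis on $\mathcal{F}_{\T^n}\sigma_\hbar$ yields precisely the translation-invariant majorant $|\lambda(m-k)|$ for the kernel. Once this identification is made rigorous using \eqref{RelationFtFz}, the rest of the argument is routine and gives the stated bound on the operator norm.
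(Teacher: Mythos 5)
Your proposal is correct and follows essentially the same route as the paper: both identify the kernel $K(k,m)=(\mathcal{F}_{\T^n}\sigma_\hbar)(k,m-k)$, use the hypothesis to dominate it by $C|\lambda(m-k)|$, and conclude by Young's convolution inequality on $\hbar\Z^n$, giving the bound $C\|\lambda\|_{\ell^1}\|f\|_{\ell^p}$. The only cosmetic difference is that the paper first argues for $g\in\ell^1(\hbar\Z^n)$ (where the interchange of sum and integral defining the kernel is immediate) and then extends by density, a point your argument handles implicitly since the majorant kernel is translation-invariant and summable.
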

   
   
   \begin{proof}[Proof of Proposition \ref{compact}]
   For $g\in \ell ^{1}(\hbar \Z^n)$, and for $k, m \in \hbar \Zn$ we can write 
   \begin{eqnarray*}
   \text{Op}_{\hbar \mathbb{Z}^n}(\sigma_\hbar)g(k) &=& \sum_{m\in \hbar \Z^{n}}g(m)\int_{\mathbb{T}^n}e^{-2\pi  \frac{i}{\hbar}(m-k)\cdot \theta}\sigma_\hbar(k,\theta)\text{d}\theta \\
   &=& \sum_{m\in \hbar \Z^{n}}g(m)(\mathcal{F}_{\mathbb T^n}\sigma_\hbar)(k,m-k)\\
   & = &  \sum_{m\in \hbar \Z^{n}}g(m)(\mathcal{F}_{\mathbb T^n}\sigma_\hbar)^{\sim}(k,k-m) \\
   &=& ((\mathcal{F}_{\mathbb T^n}\sigma_\hbar)^{\sim}(k,\cdot) * g)(k)\,,
   \end{eqnarray*}
   where we have  defined 
   \[(\mathcal{F}_{\mathbb T^n}\sigma_\hbar)^{\sim}(k,m) = :(\mathcal{F}_{\mathbb T^n}\sigma_\hbar)(k,-m). \]
   From the above we can estimate
   \begin{equation}\label{conv}
    \|\text{Op}_{\hbar \mathbb{Z}^n}(\sigma_\hbar)g\|_{\ell^{p}(\hbar \Z^n)}^{p}=\sum_{k\in \hbar \Z^{n}}|((\mathcal{F}_{\mathbb T^n}\sigma_\hbar)^{\sim}(k,\cdot) * g)(k) |^{p} \leq \sum_{k\in \hbar \Z^{n}}((|(\mathcal{F}_{\mathbb T^n}\sigma_\hbar)^{\sim}(k,\cdot)| * |g|)(k)) ^{p}.
   \end{equation}
   Taking into account the assumption on $\sigma_{\hbar}$, an application of Young's inequality for convolution yields
   \begin{equation}
       \label{app.YI}
       \sum_{k\in \hbar \Z^{n}}((|(\mathcal{F}_{\mathbb T^n}\sigma_\hbar)^{\sim}(k,\cdot)| * |g|)(k)) ^{p} \leq  C^p \sum_{k\in \hbar \Z^{n}}\Big((|\lambda| * |g|)(k)\Big)^{p} \leq C^p \|\lambda\|_{\ell ^{1}(\hbar \Z^n)}^{p}\|g\|_{\ell^{p}(\hbar \Z^n)}^{p}\,.
   \end{equation}
    The latter combined with the density of $\ell^{1}(\hbar \Z^n)$ in $\ell^{p}(\hbar \Z^n)$, where $1\leq  p< \infty$, completes the proof.
   \end{proof}
  
In the next result we strengthen the assumption on the symbol $\sigma_\hbar$ to guarantee that the corresponding semi-classical pseudo-difference operator $\Op_{\hbar \mathbb{Z}^n}(\sigma_\hbar):\ell^{p}(\hbar \Z^n)\rightarrow \ell^{p}(\hbar \Z^n)$ is bounded but also compact.

   \begin{thm}\label{THM:cpt}
   Let $\sigma_\hbar$ and  $\lambda$ be as in the hypothesis of Proposition \ref{compact}. Let also $\omega$ be a positive function on $\hbar \Zn$. Suppose also that $\sigma_\hbar$ satisfies
      \[|(\mathcal{F}_{\mathbb{T}^n}\sigma_\hbar)(k,m)| \leq \omega(k)|\lambda(m)|, \quad\textrm{ for all }\; m, k \in \hbar \Z^n,\]
   where
   \[\lim\limits_{|k|\rightarrow \infty}\omega(k) = 0.\]
   Then the pseudo-difference operator $\Op_{\hbar \mathbb{Z}^n}(\sigma_\hbar): \ell^p(\hbar \Z^n)\rightarrow \ell ^p(\hbar \Z^n)$ is a compact operator for all $1\leq p<\infty.$
   \end{thm}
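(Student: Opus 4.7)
The plan is to establish compactness via an approximation by finite-rank operators, using the decay of $\omega$ as $|k| \to \infty$ to control the error in the operator norm.

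First I would define, for each $N \in \mathbb{N}$, the truncated symbol
\[
\sigma_{\hbar, N}(k,\theta) := \chi_{\{|k|\le N\}}(k)\, \sigma_\hbar(k, \theta),
\]
and consider the associated operator $T_N := \Op_{\hbar\Z^n}(\sigma_{\hbar,N})$. Because $(\Ftn \sigma_{\hbar,N})(k,\cdot)$ vanishes for $|k|>N$, the convolution representation
\[
T_N g(k) = \bigl( (\Ftn \sigma_{\hbar,N})^{\sim}(k,\cdot) * g \bigr)(k)
\]
derived in the proof of Proposition \ref{compact} shows that $T_N g$ is supported in the ball $\{k \in \hbar \Z^n : |k| \le N\}$, which is a finite set. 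Hence the range of $T_N$ lies in a finite-dimensional subspace of $\ell^p(\hbar\Z^n)$, and Proposition \ref{compact} (applied to $\sigma_{\hbar,N}$, whose Fourier transform in the second variable still satisfies the pointwise bound by $\omega(k)|\lambda(m)|$) gives that $T_N$ is bounded. Therefore each $T_N$ is a finite-rank, hence compact, operator on $\ell^p(\hbar\Z^n)$.

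Next I would estimate the operator-norm distance $\|T - T_N\|_{\mathscr{L}(\ell^p(\hbar \Z^n))}$, where $T := \Op_{\hbar\Z^n}(\sigma_\hbar)$. By construction $(T - T_N)g(k) = 0$ whenever $|k|\le N$, so running the argument of \eqref{conv}--\eqref{app.YI} only over $|k| > N$ gives
\[
\|(T-T_N)g\|_{\ell^p(\hbar\Z^n)}^p
\le \sum_{|k|>N} \bigl(\omega(k)\, (|\lambda|*|g|)(k)\bigr)^p
\le \Bigl(\sup_{|k|>N}\omega(k)\Bigr)^p \|\lambda\|_{\ell^1(\hbar\Z^n)}^p \|g\|_{\ell^p(\hbar\Z^n)}^p,
\]
by Young's convolution inequality. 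Since $\omega(k)\to 0$ as $|k|\to\infty$, the factor $\sup_{|k|>N}\omega(k)$ tends to zero, hence $T_N \to T$ in the operator norm on $\mathscr{L}(\ell^p(\hbar\Z^n))$.

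Finally I would invoke the fact that the space of compact operators is closed in the operator-norm topology on $\mathscr{L}(\ell^p(\hbar\Z^n))$; the uniform limit $T$ of the finite-rank operators $T_N$ is therefore compact, which completes the proof. The only mild subtlety will be justifying the pointwise bound on $(\Ftn\sigma_{\hbar,N})(k,m)$: this is immediate from the hypothesis since the truncation is in $k$, which leaves the Fourier transform in $\theta$ untouched and only introduces a factor $\chi_{\{|k|\le N\}}(k) \le 1$. No other step presents a genuine obstacle, making the decay hypothesis $\omega(k)\to 0$ the single ingredient that promotes boundedness (Proposition \ref{compact}) to compactness.
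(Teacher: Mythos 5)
Your proposal is correct and follows essentially the same route as the paper: truncating the symbol in the lattice variable $k$, estimating the norm of the difference via the pointwise bound on $\mathcal{F}_{\T^n}\sigma_\hbar$, the decay of $\omega$, and Young's convolution inequality, and concluding by norm-closedness of the compact operators. Your explicit observation that each truncated operator is finite-rank (its range being supported on the finite set $\{|k|\le N\}$) is a slightly more detailed justification of the compactness of the approximants than the paper spells out, but the argument is the same.
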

   \begin{proof}
   We will show that $\Op_{\hbar \mathbb{Z}^n}(\sigma_\hbar)$ is the limit (in the operator norm sense) of a sequence of compact operators $\Op_{\hbar \mathbb{Z}^n}(\sigma_{\hbar,n})$ on $\ell^p(\hbar \Zn)$. To this end, we define the sequence of the corresponding symbols $\sigma_{\hbar,n}$:
    \[\sigma_{\hbar,n}(k,\theta) := \left\{\begin{aligned}
   \sigma_\hbar(k,\theta) , \quad |k|\leq n,\\
   0, \quad |k|> n. \\
   \end{aligned}\right.\]
   For $g \in \ell^1(\hbar \Zn)$ we have 
   \begin{equation}\label{sub}
\begin{aligned}
\big(\text{Op}_{\hbar \Z}(\sigma_\hbar) - \text{Op}_{\hbar \Z}(\sigma_{\hbar,N})\big)g(k) & = \int_{\mathbb T^n}e^{2\pi  \frac{i}{\hbar}k\cdot \theta}(\sigma_\hbar - \sigma_{\hbar,n})(k,\theta)\widehat{g}(\theta)\text{d}\theta \\
&  = \sum_{m\in  \hbar \Z^n}g(m)\int_{\mathbb T^n}e^{-2\pi  \frac{i}{\hbar}(m-k)\cdot \theta}(\sigma_\hbar - \sigma_{\hbar,n})(k,\theta)\text{d}\theta \\
&   = \sum_{m\in \hbar \Z^n}g(m)(\mathcal{F}_{\mathbb T^n}(\sigma_\hbar - \sigma_{\hbar,n}))(k,m-k).
\end{aligned}
\end{equation} 
  Then arguing as we did in Proposition \ref{compact} we  get
   \begin{eqnarray*}
   \|\big(\text{Op}_{\hbar \Z}(\sigma_\hbar) - \text{Op}_{\hbar \Z}(\sigma_{\hbar,n})\big)g\|_{\ell^p(\hbar \Z^n)}^{p}
   & \leq &  \sum_{k\in \hbar \Z^n}\Bigg(\Big(\big|\big(\mathcal{F}_{\mathbb T^n}(\sigma_\hbar - \sigma_{\hbar,n})\big)^{\sim}(k,\cdot)\big|\ast\big| g\big|\Big)(k)\Bigg)^{p}\\ 
   &\leq & \sum_{|k|>n}\Bigg(\Big(\big|\big(\mathcal{F}_{\mathbb T^n}\sigma_\hbar\big)^{\sim}(k,\cdot)\big|\ast\big| g\big|\Big)(k)\Bigg)^{p}\\
    &\leq & \sum_{|\overline{k}|>n_0}\Bigg(\Big(\varepsilon\big|\lambda\big|\ast\big| g\big|\Big)(k)\Bigg)^{p}\,,
   \end{eqnarray*}
   where for the estimate in the last line we have used the condition on the symbol $\sigma_{\hbar}$ as in the hypothesis, together with the fact that for $\omega$ as in the hypothesis and for every $\varepsilon$, there exists $n_0$ such that $|\omega(k)|<\varepsilon$ for all $k>n_0$.
 Now, an application of Young's inequality to the latter gives
   \begin{eqnarray*}
   \|\big(\text{Op}_{\hbar \Z}(\sigma_\hbar) - \text{Op}_{\hbar \Z}(\sigma_{\hbar,n})\big)g\|_{\ell^p(\hbar \Z^n)}^{p} &\leq &  \sum_{|k|>n_0}\Bigg(\Big(\varepsilon \big|\lambda\big|\ast\big| g\big|\Big)(k)\Bigg)^{p}\\
   &=& \varepsilon ^p\|\lambda\ast g\|_{\ell^{p}(\hbar \Z^n)}^{p}\\
   &\leq &  \varepsilon^{p}\|\lambda\|_{\ell^{1}(\hbar \Z^n)}^{p}\|g\|_{\ell^{p}(\hbar \Z^n)}^{p}.
   \end{eqnarray*}
   Finally by the density of $\ell ^1(\hbar \Z^n)$ in $\ell ^p(\hbar \Z^n)$ we obtain
   \[\|\text{Op}_{\hbar}(\sigma_\hbar) - \text{Op}_{\hbar}(\sigma_{\hbar,n})\|_{\mathscr{L}(\ell^p(\hbar \Z^n))} \leq  \varepsilon \|\lambda\|_{\ell^1(\hbar \Z^n)}\,,\]
  and the proof is complete.
   \end{proof}
  \section{Approximation of the classical Euclidean case}\label{Sec.h=0}
  In this section we recover known results on pseudo-differential operators in the Euclidean setting by allowing $\hbar \rightarrow 0$ in the semi-classical setting. Observe that whenever $\hbar \rightarrow 0$, the semi-classical setting $\hbar \mathbb{Z}^n$ ``approximates'' the Euclidean space $\mathbb{R}^n$.  We shall use the notation $x, \xi$ for elements in $\mathbb{R}^n$, while for elements in the semi-classical setting $\hbar \mathbb{Z}^n$ and for the toroidal elements we keep the same notation.
  
  To start our analysis, note that the definition of the  difference operators $\Delta_{\hbar,j}$ as in \eqref{EQ:diffs2}, when applied on function on $\mathbb{R}^n$ can be regarded as follows:  for fixed $x \in \mathbb{R}^n$ we have 
  \[
  \Delta_{\hbar,j}f(x):=\frac{f(x+e_j\hbar)-f(x)}{\hbar }\,,
  \]
  where $e_j=(0,\dots,0,1,0,\dots,0)$ is the vector with $1$ is at the $j^{th}$ position.  Then, for a function  $f: \mathbb{R}^n \rightarrow \mathbb{R}$ in the limiting case when $\hbar \rightarrow 0$ the difference  $\Delta_{\hbar,j}f(x)$ applied on $f$ ``approximates'' the corresponding classical partial derivative $\frac{\partial}{\partial x_j}f(x)$, or more generally we have the ``approximation'' for $x \in \mathbb{R}^n$:
  \begin{equation}\label{app1}    \Delta_{\hbar}^{\alpha}f(x)=\Delta_{\hbar,1}^{\alpha_1}\cdots \Delta_{\hbar,n}^{\alpha_n}f(x) \longrightarrow  \frac{\partial^{\alpha_1}}{\partial {x_1}^{\alpha_1}} \cdots \frac{\partial^{\alpha_n}}{\partial {x_n}^{\alpha_n}}f(x)=\partial_{x}^{\alpha}f(x)\,,\quad \alpha \in \mathbb{N}^n\,.  \end{equation}

  We note that in the expression \eqref{app1} we make abuse of the notation to describe the aforesaid notion of ``approximation''. 

  On the other hand, to ensure that the dual space of $\hbar \Zn$ ``approximates'' when $\hbar \rightarrow 0$ the dual space of $\mathbb{R}^n$ (which is $\mathbb{R}^n$ itself) we need to make the following change of variable: for $\theta \in \T^n$ we set $\omega=\frac{1}{\hbar}\theta \in \frac{\T^n}{\hbar}:=\mathbb{T}^{n}_{\hbar}$. It is then clear that in the limiting case the rescaled torus  $\T^{n}_{\hbar}$ ``approximates'' the Euclidean space $\mathbb{R}^n$. With this change of variable, the partial-type derivatives $D_{\hbar, \theta}^{(\beta)}$ introduced in Definition \ref{part.der.theta} become:
  \begin{eqnarray*}
      D_{\hbar, \theta}^{(\beta)} & = & D_{\hbar, \theta_1}^{(\beta_1)} \times \cdots \times D_{\hbar, \theta_n}^{(\beta_n)} \\
      & = & \hbar^{\beta_1} \left(\prod_{\ell=0}^{\beta_1-1}\frac{1}{2\pi i}\frac{\partial}{\partial \theta_1}-\ell\right) \times \cdots \times \hbar^{\beta_1}\left(\prod_{\ell=0}^{\beta_m-1}\frac{1}{2\pi i}\frac{\partial}{\partial \theta_n}-\ell\right)\\
      & = & \left(\prod_{\ell=0}^{\beta_1-1}\frac{1}{2\pi i}\frac{\partial}{\partial \omega_1}-\hbar \ell\right) \times \cdots \times\left(\prod_{\ell=0}^{\beta_m-1}\frac{1}{2\pi i}\frac{\partial}{\partial \omega_n}-\hbar \ell\right)\\   
      & = :  & d_{\hbar, \omega_1}^{(\beta_1)} \times \cdots \times d_{\hbar, \omega_n}^{(\beta_n)}  = d_{\hbar, \omega}^{(\beta)}\,,     \end{eqnarray*}
for some $\beta \in \mathbb{N}^n$, where $\omega=(\omega_1,\cdots, \omega_n) \in \T^{n}_{\hbar}$. Hence, when $\hbar \rightarrow 0$, we have the following ``approximation'' for $\xi \in \mathbb{R}^n$ and for a function $f: \mathbb{R}^n \rightarrow \mathbb{R}$:
\begin{equation}
    \label{app2}
    d_{\hbar, \omega}^{(\beta)}f(x) \longrightarrow \left(\frac{1}{2 \pi i} \right)^{|\beta|}\partial_{\xi}^{\beta}f(x)\,,
    \end{equation}
where $|\beta|=\beta_1+\cdots+\beta_n$ stands for the length of $\beta \in \mathbb{N}^n$.

Let us now discuss in which sense the semi-classical classes of symbols ``approximate'' in the limiting case the usual Euclidean H\"ormander classes of symbols introduced by H\"ormander in \cite{Hor66} and bearing his name. To begin with let us recap both definitions.
\medskip

\underline{\textbf{Semi-classical classes of symbols:}} Let $\rho, \delta, \mu \in \mathbb{R}$, and let $\sigma_{\hbar}: \hbar \Zn \times \Tn \rightarrow \mathbb{C}$. We say that $\sigma_{\hbar} \in S_{\rho,\delta}^\mu(\hbar\Z^n \times\T^n)$ if for all $\alpha,\beta \in \mathbb{N}_{0}^{n}$ there exists $C_{\alpha, \beta}$ such that 
    \begin{equation}\label{semi.symb.clas}
        |D_{\hbar, \theta}^{(\beta)} \Delta^\alpha_{\hbar,k} \sigma_{\hbar}(k,\theta)|\le C_{\alpha,\beta}(1+|k|)^{\mu-\rho|\alpha|+\delta|\beta|}\,.
        \end{equation}

        \medskip 

        \underline{\textbf{H\"ormander classes of symbols:}} Let $\mu \in \mathbb{R}$, $\delta<1$, $0 \leq \delta\leq \rho \leq 1$ and $\sigma : \mathbb{R}^n \times \mathbb{R}^n \rightarrow \mathbb{C}$. We say that $\sigma \in S^{\mu}_{\rho,\delta}(\mathbb{R}^n \times \mathbb{R}^n)$ if for all $\alpha,\beta \in \mathbb{N}_{0}^{n}$ there exists $C_{\alpha, \beta}$ such that 
\begin{equation}
    \label{hormander}
    |(\partial_{\xi}^{\alpha}\partial_{x}^{\beta}\sigma)(x,\xi)|\leq C_{\alpha,\beta} (1+|\xi|)^{\mu-\rho|\alpha|+\delta|\beta|}\,.
\end{equation}
\medskip 

Let us now restate the condition \eqref{semi.symb.clas} with respect to dual variable $\omega \in \T^{n}_{\hbar}$ involving the partial-type derivatives $d_{\hbar, \omega}^{(\beta)}$: 
\[
\sigma_{\hbar} \in S^{\mu}_{\rho,\delta}(\hbar \Zn \times \T^{n}_{\hbar}) \quad \text{if and only if} \quad |d_{\hbar, \omega}^{(\beta)}\Delta_{\hbar, k}^{(\alpha)}\sigma_{\hbar}(k, \omega)| \leq C_{\alpha, \beta}(1+|k|)^{\mu-\rho |\alpha|+\delta |\beta|}\,.
\]
When $\hbar$ approximates $0$ then the latter condition becomes:
\begin{equation}
    \label{classes.h0}
    \sigma_0 \in \tilde{S}^{\mu}_{\rho,\delta}(\mathbb{R}^n \times \mathbb{R}^n) \quad \text{if and only if} \quad |(\partial_{\xi}^{\beta}\partial_{x}^{\alpha}\sigma_0)(x,\xi)| \leq C_{\alpha, \beta}(1+|x|)^{\mu-\rho |\alpha|+\delta |\beta|}\,,\end{equation}
where, with an abuse of notation, we have assumed that $\sigma_{\hbar}(k, \omega) \xrightarrow[\hbar \to 0]{} \sigma_0(x, \xi)$. 

Observe that in the definition semi-classical classes of symbols \eqref{semi.symb.clas} the order of derivatives do not follow the lines of the classical H\"ormander classes of symbols as in \eqref{hormander}. This differentiation allows, after interchanging the role of $x$ and $\xi$ in \eqref{classes.h0}, for the following ``approximation'' of the above symbols classes in the two different settings provided that $0 \leq \delta \leq \rho \leq 1$:
\[
S^{\mu}_{\rho,\delta}(\hbar \Zn \times \T^{n}_{\hbar}) \xrightarrow[\hbar \to 0]{} \tilde{S}^{\mu}_{\rho,\delta}(\mathbb{R}^n \times \mathbb{R}^n)\,,
\]
where we use the notation $\tilde{S}^{\mu}_{\rho,\delta}(\mathbb{R}^n \times \mathbb{R}^n)$ for the associated symbol classes when the roles of $x$ and $\xi$ are interchanged; that is
\[
\sigma \in \tilde{S}^{\mu}_{\rho,\delta}(\mathbb{R}^n \times \mathbb{R}^n)\quad \text{if and only if}\quad |(\partial_{\xi}^{\beta}\partial_{x}^{\alpha}\sigma)(x,\xi)| \leq C_{\alpha, \beta}(1+|x|)^{\mu-\rho |\alpha|+\delta |\beta|}\,. 
\]
 With the above considerations we also have that $S^{-\infty}(\hbar \Zn \times \T^{n}_{\hbar})\xrightarrow[\hbar \to 0]{} \tilde{S}^{-\infty}(\mathbb{R}^n \times \mathbb{R}^n)$; that is, in the limiting case, the smoothing semi-classical pseudo-differential operators as in Definition \ref{smooth.semi} can considered to be negligible in the sense that when applied to distributions they produce rapidly decaying functions. 

 To give a meaning to the above ``aproximation'' of symbol classes in the two settings, let us state how the composition formula, see Theorem \ref{THM:comp}, the formulae for the adjoint and transpose of a $\Psi$DO in the semi-classical setting, see Theorem \ref{THM:adjoint} and Theorem \ref{THM:transpose}, respectively, and the asymptotic sum of $\Psi_\hbar$DOs, see Lemma \ref{LEM:as.sums}, become in the limiting case when $\hbar$ approaches $0$.  

 Below we discuss the above aspects of the symbolic calculus. With an abuse of notation we assume that for the  $\sigma_\hbar, \tau_\hbar$ semi-classical symbols in the hypothesis of the aforesaid theorems,  we have $\sigma_{\hbar}(k, \omega) \xrightarrow[\hbar \to 0]{} \sigma_0(x, \xi)$ and $\tau_{\hbar}(k, \omega) \xrightarrow[\hbar \to 0]{} \tau_0(x, \xi)$, where $(k, \omega) \in \hbar \Zn \times \T^{n}_{\hbar}$ and $x,\xi \in \mathbb{R}^n$.

 \medskip

 \underline{\textbf{Composition formula when $\hbar \rightarrow 0$:}} As noted in the discussion that follows after Theorem \ref{THM:comp} on the composition formula, the order of taking differences and derivatives in the corresponding asymptotic sum \eqref{EQ:comp} is different from the one in the Euclidean case. However, this differentiation allows to recover the classical composition formula for $\Psi$DO in the Euclidean setting. Indeed, Theorem \ref{THM:comp} when $\hbar \rightarrow 0$ identifies with the Euclidean one and in particular can be regarded as:  For $\sigma_0 \in \tilde{S}^{\mu_1}_{\rho, \delta}(\mathbb{R}^n \times \mathbb{R}^n)$ and $\tau_0 \in \tilde{S}^{\mu_2}_{\rho, \delta}(\mathbb{R}^n \times \mathbb{R}^n)$, the operator $\Op(\sigma_0) \circ \Op(\tau_0)$ has symbol $\varsigma_0 \in \tilde{S}^{\mu_1+\mu_2}_{\rho, \delta}(\mathbb{R}^n \times \mathbb{R}^n)$ given by the asymptotic sum 
 \[
 \varsigma_0(x,\xi) \sim  \sum_{\alpha} \frac{(2 \pi i)^{-|\alpha|}}{\alpha !}(\partial_{\xi}^{\alpha}\sigma_0)(x,\xi) (\partial_{x}^{\alpha} \tau_0)(x, \xi)\,, 
 \]
 where the factor $(2 \pi i)^{-|\alpha|}$ is due to the ``aproximation'' \eqref{app2} of partial-type derivatives, and is exactly the composition formula for $\Psi$DO with symbols in the H\"ormander classes in the Euclidean setting; see Theorem 2.5.1 in \cite{ruzhansky2009pseudo}.

 \medskip

 \underline{\textbf{Adjoint operator when $\hbar \rightarrow 0$:}} Before stating how the Theorem \ref{THM:adjoint} in the semi-classical setting reads in the limiting case, let us point out that, reasoning as above, the $\ell^2(\hbar \Zn)$-adjoint should be regarded as the $L^2(\mathbb{R}^n)$-adjoint as $\hbar$ approaches $0$.  In this sense, Theorem \ref{THM:adjoint} in the limiting case can be viewed as: For $\sigma_0 \in S^{\mu}_{\rho, \delta}(\mathbb{R}^n \times \mathbb{R}^n)$, there exists a symbol $\sigma_{0}^{*} \in S^{\mu}_{\rho, \delta}(\mathbb{R}^n \times \mathbb{R}^n)$ such that $\Op(\sigma_0)^{*}=\Op(\sigma_{0}^{*})$
, where $\Op(\sigma_0)^{*}$ is the $L^2(\mathbb{R}^n)$-adjoint of $\Op(\sigma_0)$, and we have the asymptotic expansion 
\[
\sigma_{0}^{*}(x,\xi) \sim \sum_{\alpha} \frac{(2 \pi i)^{-|\alpha|}}{\alpha !} \partial_{\xi}^{\alpha}\partial_{x}^{\alpha}\overline{\sigma_{0}(x, \xi)}\,,
\]
where as before the factor $(2 \pi i)^{-|\alpha|}$ is due to \eqref{app2} so that the above formula agrees with the Euclidean one; see Theorem 2.5.13 in \cite{ruzhansky2009pseudo}. 

\medskip 

\underline{\textbf{Transpose operator when $\hbar \rightarrow 0$:}} When $\hbar$ approaches $0$, Theorem \ref{THM:transpose} agrees with the corresponding result in the Euclidean setting, see Section 2.5 in \cite{ruzhansky2009pseudo}, and can be regarded as follows: For $\sigma_0 \in S^{\mu}_{\rho, \delta}(\mathbb{R}^n \times \mathbb{R}^n)$, there exists a symbol $\sigma_{0}^{t} \in S^{\mu}_{\rho, \delta}(\mathbb{R}^n \times \mathbb{R}^n)$ so that $\Op(\sigma_0)^{t}=\Op(\sigma_{0}^{t})$, and we have the following asymptotic expansion 
\[
\sigma_{0}^{t}(x,\xi) \sim \sum_{\alpha} \frac{(2 \pi i)^{-|\alpha|}}{\alpha !} \partial_{\xi}^{\alpha}\partial_{x}^{\alpha}[\sigma_{0}(x, -\xi)]\,.
\]

\medskip 

\underline{\textbf{Asymptotic sums when $\hbar \rightarrow 0$:}} When $\hbar$ approaches $0$, Lemma \ref{LEM:as.sums} can be viewed as follows: let $\big\{\mu_j\big\}_{j=0}^{\infty} \subset \mathbb{R}$ be a decreasing sequence such that  $\mu_j\rightarrow -\infty$ as $j\rightarrow \infty$, and let 
   If $\sigma_{0,j} \in S_{\rho, \delta}^{\mu_j}( \mathbb{R}^n\times\mathbb{R}^n)$  for all $j\in \mathbb{N}_0$.  Then, there exists  $\sigma_0 \in S_{\rho, \delta}^{\mu_0}(\mathbb{R}^n\times\mathbb{R}^n)$ such that 
   \[\sigma_{\hbar} \sim \sum_{j=0}^{\infty}\sigma_{\hbar,j},\]
   which means that for all $N \in \mathbb{N}$ we have 
   \[\sigma_{0} - \sum_{j=0}^{N-1}\sigma_{0,j} \in S_{\rho, \delta}^{\mu_N}(\mathbb{R}^n\times\mathbb{R}^n)\,. \]
   The latter agrees with Proposition 2.5.33 in \cite{ruzhansky2009pseudo}. 

   \medskip

   In the last part of the current subsection we discuss the $\ell^2(\hbar \Zn)$-boundedness of semi-classical operators in the limiting case when $\hbar $ approaches zero where the latter statement translates into $L^2(\mathbb{R}^n)$-boundedness. 
   
   Let us first recall a celebrated result on the boundedness of pseudo-differential operators in the Euclidean setting, see e.g.  the monograph of Stein \cite{St93}.  
   \begin{thm}\label{Stein}
   If $\sigma \in S^{0}_{\rho,\delta}(\mathbb{R}^n \times  \mathbb{R}^n)$, then $\Op_{\mathbb{R}^n}(\sigma)$ is a bounded operator from $L^2(\mathbb{R}^n)$ to $L^2(\mathbb{R}^n)$.   
   \end{thm}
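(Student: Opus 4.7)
The plan is to apply the Cotlar--Stein almost-orthogonality lemma, which is the route taken in Stein \cite{St93} and which handles the full admissible range of $(\rho,\delta)$ uniformly, including the borderline Calder\'on--Vaillancourt case $\rho=\delta<1$. A direct Schur test on the Schwartz kernel of $\Op_{\mathbb{R}^n}(\sigma)$ is not available because for $\mu=0$ the kernel has a diagonal singularity and fails to lie in $L^\infty$ in either variable separately. The first step is to decompose $\sigma=\sum_{j\in\mathbb{Z}^n}\sigma_j$ by means of a partition of unity $\{\chi_j\}$ in phase space whose cell sizes are adapted to $(\rho,\delta)$: each cell has side length of order $(1+|\xi_j|)^\rho$ in the frequency direction (and an analogous one in $x$ when $\delta>0$), chosen so that after rescaling to a unit cube each piece $\sigma_j$ behaves like a bounded symbol. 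Set $T_j:=\Op_{\mathbb{R}^n}(\sigma_j)$.

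The second step is to produce a summable function $\omega:\mathbb{Z}^n\to\mathbb{R}_+$ with
\[
\|T_jT_k^*\|_{L^2\to L^2}+\|T_j^*T_k\|_{L^2\to L^2}\le C\,\omega(|j-k|)^2.
\]
The kernels of the composed operators $T_jT_k^*$ and $T_j^*T_k$ can be written as oscillatory integrals via the Euclidean composition formula (the $\hbar\to 0$ limit of Theorem \ref{THM:comp}). When the supports of $\chi_j$ and $\chi_k$ are well separated in phase space, iterated integration by parts in these oscillatory integrals produces arbitrarily rapid decay in $|j-k|$; the hypothesis $\sigma\in S^0_{\rho,\delta}$ enters exactly here, controlling the amplitudes uniformly after each integration by parts. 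The Cotlar--Stein lemma then yields $\|T\|_{L^2\to L^2}\le C\sum_j\omega(|j|)<\infty$.

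The principal obstacle is calibrating the partition scale to the parameters $(\rho,\delta)$ so that the almost-orthogonality estimates close. This is most delicate in the borderline case $\rho=\delta$, where the cells have unit size and the decay has to come entirely from non-overlap of supports rather than from any improvement in the composition formula; picking the ``wrong'' scale loses exactly the decay that makes $\omega$ summable. In the easier range $\rho>\delta$ one can bypass Cotlar--Stein altogether by a G\aa rding-type argument: choosing $M>\sup|\sigma|^2$, the operator $M\,\mathrm{Id}-T^*T$ is a pseudo-differential operator with non-negative principal symbol, and the sharp G\aa rding inequality (the Euclidean analogue of Theorem \ref{THM:Sgard}) applied to it delivers $\|T\|^2=\|T^*T\|\le M+C$ for some $C>0$, which yields the boundedness directly.
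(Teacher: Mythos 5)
The paper does not actually prove this theorem: it is recalled as a classical Euclidean result with a pointer to Stein's monograph \cite{St93}, and serves only as a benchmark for the $\hbar\to 0$ discussion of Section \ref{Sec.h=0}. So there is no internal argument to compare with; what you propose is, in outline, precisely the argument of the cited source: a phase-space partition adapted to $(\rho,\delta)$, almost-orthogonality bounds for $T_jT_k^*$ and $T_j^*T_k$ obtained by repeated integration by parts in the oscillatory integrals for their kernels, and the Cotlar--Stein lemma, which is how the Calder\'on--Vaillancourt case $\rho=\delta<1$ is handled in \cite{St93}. As a sketch this is sound, with two caveats you should make explicit. First, the statement is only true under the standing restrictions built into the paper's definition \eqref{hormander}, namely $0\le\delta\le\rho\le1$ and $\delta<1$ (it fails for $S^0_{1,1}$), and these hypotheses must visibly enter your almost-orthogonality estimates; relatedly, the $x$-side cells should have the reciprocal scale $(1+|\xi_j|)^{-\delta}$ rather than an ``analogous'' one, since that is what makes each rescaled piece a symbol with uniformly bounded derivatives. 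Second, the alternative route for $\rho>\delta$ through the sharp G{\aa}rding inequality is circular as stated: the standard proofs of sharp G{\aa}rding (including the one behind Theorems \ref{THM:sG} and \ref{THM:Sgard}, cf. \cite{ruzhansky2011sharp}) already invoke $L^2$-boundedness of operators of order at most zero to control the symmetrization remainder. The non-circular classical form of that idea is H\"ormander's square-root argument: with $M^2>\sup|\sigma|^2$, write $M^2\,\mathrm{Id}-T^*T=B^*B+R$ with $B=\Op_{\mathbb{R}^n}\bigl(\sqrt{M^2-|\sigma|^2}\bigr)$, and iterate the construction until the remainder $R$ has order so negative that elementary kernel (Schur) or Hilbert--Schmidt estimates give its boundedness; this is exactly where $\rho>\delta$ is genuinely used.
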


   \underline{\textbf{On the boundedness of $\Psi_\hbar$DO when $\hbar \rightarrow 0$:}} 
Let us first see how Theorem \ref{THM:L2} translates in the limiting case: Let $\kappa \in \mathbb{N}$, $\kappa > n/2$, and let  $\sigma_0 : \mathbb{R}^n \times \mathbb{R}^n \rightarrow \mathbb{C}$ be a symbol satisfying 
\begin{equation}\label{cont.cond.lim}
|\partial_{\xi}^{\alpha}\sigma_0(x,\xi)| \leq C\,,\quad \text{for all} \ x,\xi \in \mathbb{R}^n\,,
\end{equation}
where $|\alpha| \leq \kappa$. Then, $\Op_{\mathbb{R}^n}(\sigma_0)$ extends to a bounded operator on $L^2(\mathbb{R}^n)$. Interestingly, going back to the analogous result on the Euclidean setting,  we see that Theorem \ref{Stein} implies the limiting condition \eqref{cont.cond.lim} for all $\alpha \in \mathbb{N}^n$. Indeed, let $\sigma_0 \in S^{0}_{\rho,\delta}(\mathbb{R}^n \times  \mathbb{R}^n)$. Then, by the assumption on $\sigma_0$ and inequality \eqref{hormander}, we have 
\[
|\partial_{\xi}^{\alpha}\sigma_0(x,\xi)| \leq C (1+|\xi|)^{-\rho |\alpha|} \leq C\,,\quad \text{for all} \ \alpha \in \mathbb{N}^n\,.
\]
Conversely, condition \eqref{cont.cond.lim} implies the assumption of Theorem \ref{Stein} provided that $\delta |\beta| \geq \rho |\alpha|$. 
  \section{Examples}
  \label{SEC:EX}
  
 In this last section we consider certain examples of difference equations where semi-classical pseudo-differential operators are involved. In particular, in the subsequent examples, making use of the analysis above we study: the order of the corresponding semi-classical symbol, the boundedness of the operator, the ellipticity of the symbol and the existence, or even the exact formula, of the parametrix.   
 
 In what follows we  denote by $v_j$ the unit vector $v_{j} = (0, \ldots, 0, 1, 0, \ldots, 0)\in \mathbb{Z}^n$, where $1$ is the $j^{th}$ entry of the vector.  
    \begin{ex}
    Below we list several cases of semi-classical difference equations:
    \begin{enumerate}
        \item (Case of non-elliptic bounded operator of zero order) Let us define the operator $D_j$ by
          \[D_{j}f(k) = f(k+\hbar v_j) - f(k)\,,\quad \text{where}\quad k \in \hbar \Zn\,.\]
     For $\theta \in \Tn$, let $e_\theta: \hbar \Zn \rightarrow \mathbb{R}$ be the function given by $e_\theta(k)=: e^{2\pi \frac{i}{\hbar} k\cdot \theta}\,.$ Then since 
    \[D_{j}e_{\theta}(k) = e^{2\pi \frac{i}{\hbar} (k + \hbar v_j)\cdot \theta} - e^{2\pi \frac{i}{\hbar} k\cdot \theta},\] 
    by Proposition \ref{PROP:symbols} the symbol of $D_j$ is given by
    \[\sigma_{\hbar,D_{j}}(k,\theta) = e^{2\pi i v_j\cdot \theta} - 1 = e^{2\pi i \theta_j} - 1.\]
   Thus, the operator $D_j$ is not elliptic, we have $\Op(\sigma_{\hbar,D_j}) \in \Op(S^{0}(\hbar \Zn \times \Tn))$ and is bounded by Corollary \ref{COR:L2} from $\ell^2(\hbar \Z)$ to $\ell^2(\hbar \Z)$.
   \item (Case of elliptic bounded operator of positive order)  Let us define the operator
   \[L_jf(k) = |k|^{r}(f(k+\hbar v_j)+a)-|k|^s (f(k-\hbar v_j)+b)\,,\]
for some $a,b \in \mathbb{R}$ such that $|a|, |\beta|\geq 1$.   Its symbol is given by 
    \[\sigma_{\hbar,L_{j}}(k,\theta) = |k|^r(e^{2\pi i \theta_j}+e^{-2\pi \frac{i}{\hbar}k \cdot\theta}a)-|k|^{s}(e^{-2\pi i \theta_j}+e^{-2\pi \frac{i}{\hbar}k \cdot\theta}b)\,.\]
    We have $\sigma_{\hbar,L_{j}}(k,\theta) \in  S^{\max\{s,r\}}(\hbar \mathbb{Z}^n \times \mathbb{T}^n)$.
    This symbol is  elliptic of order $r$ if $r\geq s$, and non-elliptic otherwise.
    Consequently, from  Corollary \ref{COR:L2} and for $r \geq s$, the operator $L_j$ is bounded from the weighted space $ \ell^2_{t+s}(\hbar \Zn)$ to the weighted space $ \ell^2_{t}(\hbar \Zn)$, for any $t \in \mathbb{R}$.
  \item   (Case of elliptic bounded operator of zero order)  Let us define the operator
     \[{T}f(k) := \sum_{j=1}^{n}\Big(f(k + \hbar v_j) + f(k - \hbar v_j)\Big) + cf(k)\,,\quad \text{where}\quad c \in \mathbb{C}\,.\]
     The explicit formula of its symbol can be found as follows	
    	\[
     \sigma_{\hbar,T}(k,\theta)=\sum_{i=1}^{n} \left(e^{2\pi i \theta_j}-e^{-2\pi i \theta_j} \right)+c=2i \sum_{j=1}^{n}\sin(2 \pi \theta_j)+c \in S^{0}(\hbar \Zn \times \Tn)\,.
        \]
        and is elliptic in the case where $\textnormal{Re}\, c \neq 0$ or $\textnormal{Im}\, c  \notin [-2n,2n]$. Under such assumptions on $c$ the inverse operator $T^{-1}$ is also of order zero, and its symbol that depends only on the toroidal variable $\theta$ is given by 
        \[
        \sigma_{\hbar,T^{-1}}(\theta)=\frac{1}{2i \sum_{j=1}^{n}\sin(2 \pi \theta_j)+c}\,.
        \]
        Hence the solution to the equation
        \begin{equation}\label{ex.eq}
        Tf(k)=g(k)\,,
        \end{equation}
        is given by
        \[T^{-1}g(k)=\Op(\sigma_{\hbar, T^{-1}}g(k)=\int_{\Tn}e^{2\pi \frac{i}{\hbar} k\cdot \theta}\frac{1}{2i \sum_{j=1}^{n}\sin(2 \pi \theta_j)+c} \widehat{g}(\theta)\dd \theta\,. \]
        By Corollary \ref{COR:L2} the operators $T,T^{-1}$ are bounded from $\ell^{2}_{s}(\hbar \Zn)$ to $\ell^{2}_{s}(\hbar \Zn)$, which implies that if $g \in \ell^{2}_{s}(\hbar \Zn)$ then for the solution to the equation \eqref{ex.eq} we also have $f \in \ell^{2}_{s}(\hbar \Zn)$.   
    \end{enumerate}
    \end{ex}


\begin{thebibliography}{GJBNM16}

\bibitem[Agr79]{Agran79}
M.~S. Agranovich.
\newblock Spectral properties of elliptic pseudo-differential operators on a
  closed curve.
\newblock {\em Functional. Analysis. i Prilozhen.}, 13(4):54--56, 1979.

\bibitem[Agr84]{Agran84}
M.~S. Agranovich.
\newblock Elliptic pseudo-differential operators on a closed curve.
\newblock {\em Trudy Moskov. Mat. Obshch.}, 47:22--67, 246, 1984.

\bibitem[Amo88]{Amosov88}
B.~A. Amosov.
\newblock On the theory of pseudo-differential operators on the circle.
\newblock {\em Uspekhi Mat. Nauk}, 43(3(261)):169--170, 1988.

\bibitem[BKR20]{BKR20}
L.~N.~A.~ Botchway, P.~G.~ Kibiti and M.~ Ruzhansky.
\newblock Difference operators and pseudo-differential operators on $\mathbb{Z}^n$.
\newblock {\em J. Functional. Anal.}, 278(11):108473, 33, 2020.

\bibitem[Car14]{Cardona14}
D.~Cardona.
\newblock Weak type {$(1,1)$} bounds for a class of periodic
  pseudo-differential operators.
\newblock {\em J. Pseudo-Differ. Operators. Application.}, 5(4):507--515, 2014.

\bibitem[Cat14]{Catana14}
V.~Catana.
\newblock {$L^p$}-boundedness of multi linear pseudo-differential operators on
  {$\Bbb Z^n$} and {$\Bbb T^n$}.
\newblock {\em Math. Model. Nat. Phenom.}, 9(5):17--38, 2014.

\bibitem[CDRT23]{CDRT23}
M. ~Chatzakou, A. ~Dasgupta, M. ~ Ruzhansky and A. ~Tushir.
\newblock Discrete Heat Equation with irregular thermal conductivity and tempered distributional data.
\newblock {\em arXiv:2304.10275}, 2023.


\bibitem[Cor95]{Cordes-bk}
H.~O. Cordes.
\newblock {\em The technique of pseudo-differential operators}, volume 202 of
  {\em London Mathematical Society Lecture Note Series}.
\newblock Cambridge University Press, Cambridge, 1995.

\bibitem[DR16]{DR:Gohberg}
A.~Dasgupta and M.~Ruzhansky.
\newblock The {G}ohberg lemma, compactness, and essential spectrum of operators
  on compact Lie groups.
\newblock {\em J. Anal. Math.}, 128:179--190, 2016.

\bibitem[DR14]{DR-JMPA}
J.~Delgado and M.~Ruzhansky.
\newblock {$L^p$}-nuclearity, traces, and {G}rothendieck-{L}idskii formula on
  compact Lie groups.
\newblock {\em J. Math. Pures Appl. (9)}, 102(1):153--172, 2014.






\bibitem[DRT22]{DRT22}
A.~Dasgupta,  M.~Ruzhansky and A.~ Tushir.
\newblock Discrete time-dependent wave equations I. Semi-classical analysis.
\newblock {\em J. Differ. Equations.}, 317:89--120, 2022.

\bibitem[DR17]{DR-MRL}
J.~Delgado and M.~Ruzhansky.
\newblock Schatten classes and traces on compact groups.
\newblock {\em Math. Res. Lett.}, 24:979--1003, 2017.

\bibitem[DRT17]{DRT-JMPA}
J.~Delgado, M.~Ruzhansky, and N.~Tokmagambetov.
\newblock Schatten classes, nuclearity and non-harmonic analysis on compact
  manifolds with boundary.
\newblock {\em J. Math. Pure Applications. (9)}, 107(6):758--783, 2017.

\bibitem[DW13]{DW2013}
J.~Delgado and M.~W. Wong.
\newblock {$L^p$}-nuclear pseudo-differential operators on {$\mathbb{Z}$} and
  {$\mathbb{S}^1$}.
\newblock {\em Proc. Amer. Math. Soc.}, 141(11):3935--3942, 2013.


\bibitem[GK00]{GK00}
J. G\"artner and W. K\"onig. 
\newblock Moment asymptotics for the continuous parabolic Anderson model.
\newblock {\em Ann Appl Probab.}, 29(2): 192-217, 2000.

\bibitem[GJBNM16]{ghaemi2015study}
M.~B. Ghaemi, M.~Jamalpour~Birgani, and E.~Nabizadeh~Morsalfard.
\newblock A study on pseudo-differential operators on {$\mathbb{S}^1$} and
  {$\mathbb{Z}$}.
\newblock {\em J. Pseudo-Differential. Operator. Applications.}, 7(2):237--247, 2016.

\bibitem[Goh60]{Gohberg}
I.~C. Gohberg.
\newblock On the theory of multidimensional singular integral equations.
\newblock {\em Soviet Math. Dokl.}, 1:960--963, 1960.

\bibitem[Gro55]{Groth-MAMS}
A.~Grothendieck.
\newblock Produits tensoriels topologiques et espaces nucl\'eaires.
\newblock {\em Mem. Amer. Math. Soc.}, No. 16:140, 1955.

\bibitem[Hor66]{Hor66}
L. H\"ormander.
\newblock On the asymptotic distribution of the eigenvalues of pseudo-differential operators in $\mathbb{R}^n$.
\newblock {\em Ark.Mat.}, 17(2):297-313, 1979.



\bibitem[Kon16]{Kon16}
W. K\"onig.
\newblock {\em The parabolic Anderson model. Random walk in random potential.}
\newblock Birkh\"auser Basel, Basel, 2016

\bibitem[Lid59]{Lidskii59}
V.~B. Lidski\u\i.
\newblock Non-selfadjoint operators with a trace.
\newblock {\em Dokl. Akad. Nauk SSSR}, 125:485--487, 1959.

\bibitem[LNJP16]{Paycha16}
C.~L\'evy, C.~Neira~Jim\'enez, and S.~Paycha.
\newblock The canonical trace and the non-commutative residue on the
  non-commutative torus.
\newblock {\em Trans. Amer. Math. Soc.}, 368(2):1051--1095, 2016.

\bibitem[Mol10]{molahajloo2009pseudo}
S.~Molahajloo.
\newblock Pseudo-differential operators on {$\mathbb{Z} $}.
\newblock In {\em Pseudo-differential operators: complex analysis and partial
  differential equations}, volume 205 of {\em Operator. Theory Adv. Applications.}, pages
  213--221. Birkh\"auser Verlag, Basel, 2010.

\bibitem[Mol11]{Mol11}
S.~Molahajloo.
\newblock A characterization of compact pseudo-differential operators on {$\Bbb
  S^1$}.
\newblock In {\em Pseudo-differential operators: analysis, applications and
  computations}, volume 213 of {\em Operator. Theory Adv. Application.}, pages 25--29.
  Birkh\"auser/Springer Basel AG, Basel, 2011.

\bibitem[MR17]{Mantoiu-Ruzhansky-DM}
M.~Mantoiu and M.~Ruzhansky.
\newblock Pseudo-differential operators, {W}igner transform and Weyl systems
  on type {I} locally compact groups.
\newblock {\em Doc. Math.}, 22:1539--1592, 2017.

\bibitem[Olo72]{Oloff72}
R.~Oloff.
\newblock {$p$}-normierte {O}peratorenideale.
\newblock {\em Beitr\"age Anal.}, (4):105--108, 1972.
\newblock Tagungsbericht zur Ersten Tagung der WK Analysis (1970).

\bibitem[Pie07]{Pietsch-history}
A.~Pietsch.
\newblock {\em History of Banach spaces and linear operators}.
\newblock Birkh\"auser Boston, Inc., Boston, MA, 2007.

\bibitem[Pir11]{Pir11}
M.~Pirhayati.
\newblock Spectral theory of pseudo-differential operators on {$\mathbb{S}^1$}.
\newblock In {\em Pseudo-differential operators: analysis, applications and
  computations}, volume 213 of {\em Operator. Theory Advance. Application.}, pages 15--23.
  Birkh\"auser/Springer Basel AG, Basel, 2011.

\bibitem[PZ14a]{ParZ14}
A.~Parmeggiani and L.~Zanelli.
\newblock Wigner measures supported on weak {KAM} tori.
\newblock {\em J. Anal. Math.}, 123:107--137, 2014.

\bibitem[PZ14b]{PZ14}
T.~Paul and L.~Zanelli.
\newblock On the dynamics of {WKB} wave functions whose phase are weak {KAM}
  solutions of {H}-{J} equation.
\newblock {\em J. Fourier Analysis. Applications.}, 20(6):1291--1327, 2014.

\bibitem[Rab10]{Rab2010}
V.~Rabinovich.
\newblock Exponential estimates of solutions of pseudo-differential equations on
  the lattice {$(h\Bbb Z)^n$}: applications to the lattice {S}chr\"odinger and
  Dirac operators.
\newblock {\em J. Pseudo-Differ. Operator. Application.}, 1(2):233--253, 2010.

\bibitem[Rab13]{Rab13}
V.~Rabinovich.
\newblock Wiener algebra of operators on the lattice {$(\mu \Bbb{Z})^n$}
  depending on the small parameter {$\mu>0$}.
\newblock {\em Complex Var. Elliptic Equations.}, 58(6):751--766, 2013.

\bibitem[RR04]{RR-AAM}
V.~S. Rabinovich and S.~Roch.
\newblock Pseudo-difference operators on weighted spaces, and applications to
  discrete {S}chr\"odinger operators.
\newblock {\em Acta Appl. Math.}, 84(1):55--96, 2004.

\bibitem[RR06]{RR-2006}
V.~S. Rabinovich and S.~Roch.
\newblock The essential spectrum of {S}chr\"odinger operators on lattices.
\newblock {\em J. Phys. A}, 39(26):8377--8394, 2006.

\bibitem[RR09]{Rab09}
V.~S. Rabinovich and S.~Roch.
\newblock Essential spectra and exponential estimates of eigen functions of
  lattice operators of quantum mechanics.
\newblock {\em J. Phys. A}, 42(38):385207, 21, 2009.

\bibitem[RT07]{RT-Birk}
M.~Ruzhansky and V.~Turunen.
\newblock On the Fourier analysis of operators on the torus.
\newblock In {\em Modern trends in pseudo-differential operators}, volume 172
  of {\em Operator. Theory Adv. Applications.}, pages 87--105. Birkh\"auser, Basel, 2007.

\bibitem[RT09]{RT-NFAO}
M.~Ruzhansky and V.~Turunen.
\newblock On the toroidal quantization of periodic pseudo-differential
  operators.
\newblock {\em Numerical. Functional. Anal. Optimal.}, 30(9-10):1098--1124, 2009.

\bibitem[RT10a]{ruzhansky2009pseudo}
M.~Ruzhansky and V.~Turunen.
\newblock {\em Pseudo-differential operators and symmetries}, volume~2 of {\em
  Pseudo-Differential Operators. Theory and Applications}.
\newblock Birkh\"auser Verlag, Basel, 2010.
\newblock Background analysis and advanced topics.

\bibitem[RT10b]{RT-JFAA}
M.~Ruzhansky and V.~Turunen.
\newblock Quantization of pseudo-differential operators on the torus.
\newblock {\em J. Fourier Anal. Applications.}, 16(6):943--982, 2010.

\bibitem[RT11a]{CR2011}
C.~A. Rodriguez~Torijano.
\newblock {$L^p$}-estimates for pseudo-differential operators on {$\Bbb Z^n$}.
\newblock {\em J. Pseudo-Differ. Operators. Applications.}, 2(3):367--375, 2011.

\bibitem[RT11b]{ruzhansky2011sharp}
M.~Ruzhansky and V.~Turunen.
\newblock Sharp {G}\aa rding inequality on compact Lie groups.
\newblock {\em J. Functional. Anal.}, 260(10):2881--2901, 2011.

\bibitem[RT13]{RT-IMRN}
M.~Ruzhansky and V.~Turunen.
\newblock Global quantization of pseudo-differential operators on compact Lie
  groups, {$\rm SU(2)$}, 3-sphere, and homogeneous spaces.
\newblock {\em Int. Math. Res. Not. IMRN}, (11):2439--2496, 2013.

\bibitem[RT16]{RN-IMRN}
M.~Ruzhansky and N.~Tokmagambetov.
\newblock Non-harmonic analysis of boundary value problems.
\newblock {\em Int. Math. Res. Not. IMRN}, (12):3548--3615, 2016.

\bibitem[RTW14]{ruzhansky2014hormander}
M.~Ruzhansky, V.~Turunen, and J.~Wirth.
\newblock H\"{o}rmander class of pseudo-differential operators on compact {L}ie
  groups and global hypoellipticity.
\newblock {\em J. Fourier Anal. Applications.}, 20(3):476--499, 2014.

\bibitem[RW14]{Ruzhansky-Wirth:functional-calculus}
M.~Ruzhansky and J.~Wirth.
\newblock Global functional calculus for operators on compact Lie groups.
\newblock {\em J. Functional. Anal.}, 267:772--798, 2014.

\bibitem[St93]{St93}
E. M. Stein.
\newblock {\em Harmonic analysis: real-variable methods, orthogonality, and oscillatory integrals}.
\newblock Princeton University Press, 1993.

\end{thebibliography}

 \end{document}